\documentclass[]{amsart}
\usepackage[utf8]{inputenc}  
\usepackage{csquotes}
\usepackage[T1]{fontenc}
\usepackage{graphicx}
\usepackage{tikz}
\usetikzlibrary{positioning}
\usepackage{float}
\usepackage{algorithm}
\usepackage{algorithmic}
\usepackage[utf8]{inputenc}
\usepackage[export]{adjustbox}
\usepackage{wrapfig}
\usepackage{amsfonts}
\usepackage{amsthm}
\usepackage{array}
\usepackage[top=2cm, bottom=2cm, left=2cm, right=2cm]{geometry}
\usepackage{amssymb}
\usepackage{xcolor}
\usepackage{mathtools}
\usepackage{comment}
\usepackage[main=english]{babel}
\usepackage{subcaption}
\usepackage{dsfont}
\usepackage{bbm}
\usepackage{lmodern}
\usepackage{indentfirst}
\usepackage{tabto}
\usepackage{color}
\usepackage{systeme}
\usepackage{eso-pic}
\usepackage{float}
\usepackage{fancyhdr}
\usepackage{appendix}
\usepackage{listings}
\usepackage{pgf, tikz}
\usepackage[utf8]{inputenc}
\usepackage{lastpage}

\definecolor{bf}{rgb}{0,0,0.6} 
\definecolor{mygray}{gray}{0.85}
\definecolor{darkWhite}{rgb}{0.94,0.94,0.94}

\graphicspath{{images/}}

\newcommand{\espcond}[2]{\mathbb{E}\mathopen{}\left[#1\middle|#2\right]}
\newcommand{\esp}[1]{\mathbb{E}\mathopen{}\left[#1\right]}

\newcommand{\reels}{\mathbb{R}}

\usepackage{thmtools,thm-restate}

\usepackage{hyperref}

\numberwithin{equation}{section}

\usepackage{amsthm, amssymb}

\newtheorem{thm}{Theorem}[section]
\newtheorem{definition}[thm]{Definition}

\newtheorem{lemma}[thm]{Lemma}
\newtheorem{hyp}[thm]{Hypothesis}
\newtheorem{corol}[thm]{Corollary}
\theoremstyle{definition}
\newtheorem{remarque}[thm]{Remark}

\title{Learning controls in mean field games: the fictitious play and related gradient descent }
\begin{document}

\begin{abstract}
    We investigate two learning procedures and their relationship to the formation of equilibrium in potential mean field games (MFGs). The first one is a version of the fictitious play in which players observe directly the actions (controls) of other players and learn the distribution of the MFG through an update rules on controls. This leads to a new iterative procedure which converges for a wide class of potential MFGs of controls, even in the presence of common noise. On the other hand we also consider the learning procedure of a principal agent trying to solve a mean field control problem with only local gradient information. We prove that under very general assumption, the gradient descent of such agent converges to a solution of the associated potential MFG.
\end{abstract}
\author{Meynard Charles}
\maketitle
\section{Introduction}
\subsection{General introduction on mean field games}
Mean field games were introduced simultaneously by Lasry and Lions \cite{LasryLionsMFG, Lions-college} on the one side and Huang, Malhamé, and Caines \cite{malhame} on the other, motivated by applications in economics and game theory. This theory provides a rigorous framework for the analysis of a population of rational agents interacting with each other only through the distribution of the overall population. Given a Lagrangian $L:[0,T]\times \reels^{2d}\times \mathcal{P}_2(\reels^{2d})\to \reels$, a terminal condition $g:\reels^d\times \mathcal{P}_2(\reels^d)\to \reels$, and initial conddition $X_0\in \mathcal{H}^d$, we introduce the following cost function 
\begin{gather}
\label{eq: cost function}
    J(\alpha,(m_t)_{t\in [0,T]})=\esp{G(X^\alpha_T,\pi_d m_T)+\int_0^T L(t,X^\alpha_t,\alpha_t,m_t)dt},\\
   \nonumber X^\alpha_t=X_0-\int_0^t \alpha_s ds+\sigma W_t+\sigma^0W^0_t,
\end{gather}
where $\pi_dm$ indicates the marginal of a measure $m\in \mathcal{P}_2(\reels^{2d})$ over its first $d$ arguments, $(W_t)_{t\geq 0}$ is a Brownian motion describing the idiosyncractic noise and $(W^0_t)_{t\geq 0}$ describes the evolution of the common noise. In this article, we are interested in the associated mean field game: the functional $J$ indicates the cost a single player would pay if their initial position was $X_0$, their control $(\alpha_t)_{t\in [0,T]}$ and the distribution of all other players and of their controls given a realisation of the common noise $\omega^0$ was the flow $(m_t(\omega^0))_{t\in [0,T]}$. Given an initial distribution $\mu$ for the initial position of players the mean field game problem consists in finding a Nash equilibrium, that is a distribution $(m^*_t)_{t\in [0,T]}$ such that if all players expect the distribution of other players (and their controls) to be given by $(m^*_t)_{t\in [0,T]}$ and control their state optimally given this prior, then the distribution of all players at all time is indeed given by $(m^*_t)_{t\in [0,T]}$. Choosing $X_0\sim \mu$ to express this condition mathematically in terms of a representative player, this problem is equivalent to finding a control process $(\alpha^*_t)_{t\in [0,T]}$ satisfying the equilibrium condition 
\begin{gather}
    \label{eq: mfg equilibrium condition}
    (\alpha^*_t)_{t\in [0,T]}\in \underset{\alpha}{\text{arginf}}  J\left(\alpha,\left(m^*_t\right)_{t\in [0,T]}\right)\\
   \nonumber \forall t\in [0,T],\quad m^*_t=\mathcal{L}(X^{\alpha^*}_t,\alpha^*_t|\mathcal{F}^0_t),
\end{gather}
where $(\mathcal{F}^0_t)_{t\geq 0}$ indicates the filtration associated to the common noise. The equilibrium condition of MFGs can be expressed through several formulations of the problem: from a coupled system consisting of a forward equation for the distribution of players over time and a backward equation for the controls of agents (this includes both the MFG system from the HJB formulation \cite{LasryLionsMFG} and the associated FBSDE obtained from Pontryagin \cite{carmona2018probabilistic}), to a master equation encompassing the whole dynamics of the game in a single infinite-dimensional equation \cite{Lions-college, CDLL}. Many articles have already studied these different equations, in particular in several monotone settings \cite{LasryLionsMFG, meszaros} in which uniqueness of the equilibrium \cite{Bertucci-monotone, meynard-3} is recovered thanks to the adversarial nature of players. In this article we consider instead the potential regime \cite{CecchinDelarue2026}, meaning coefficients are not assumed to be monotone but the MFG problem derives from a mean field control one \cite{bensoussan2013mean}. Mathematically, this means there exists a Lagrangian $l:[0,T]\times \mathcal{P}_2(\reels^{2d})\to \reels$ and a terminal cost $g:[0,T]\times \mathcal{P}_2(\reels^d)\to \reels$, such that the following assumption is verified
\[G=\frac{\delta g}{\delta \mu}, \quad L=\frac{\delta l}{\delta m}.\]
Consequently, the mean field game admits the following potential 
\begin{equation}
    \label{eq: potential of the mfg}
    \mathcal{J}(\alpha)=\esp{\int_0^T l(t,\mathcal{L}(X^\alpha_t,\alpha_t|\mathcal{F}^0_t))dt+g(\mathcal{L}(X^\alpha_T|\mathcal{F}^0_T))}.\end{equation}
While our main goal is to consider 'true' mean field games of controls (by which we mean MFG with a non linear term depending on the distribution of controls), this also includes more standard control problems. For example taking 
\[l(m)=\int_{(x,a)\in \reels^{2d}} |a|^2 m(dx,da),\]
is equivalent to considering a quadratic penality in controls. More generally, any Lagrangian $l$ which depends linearly on its marginal over the last $d-$variables (that is, the one associated with controls) correspond to a standard mean field control problem. 
\subsection{The fictitious play}
From a practical perspective, a key question in MFGs concerns the formation of MFG equilibria, namely, how these equilibria actually arise from a disordered crowd of agents who do not have any a priori knowledge of the state of the population. Answering this question is the objective of fictitious play, introduced in the context of mean field games in \cite{cardaliaguet-fictitious-play}. In this setting, agents repeatedly play the same game, trying to infer the future actions of the other players from past experience. The question of learning is a pivotal one in game theory \cite{fudenberg1998theory}, and fictitious play has a long history \cite{brown1951iterative,Monderer_Shapley_1996,Robinson_1951}. In particular, in this framework, agents usually learn by playing the best response to the averaged controls of their counterparts over previous games, in contrast to \cite{cardaliaguet-fictitious-play}, in which agents learn directly from the distribution of the others.. This is precisely the procedure we study in this article, that is: given an initial control $(\bar{\alpha}^0_t)_{t\in [0,T]}$, we investigate the sequence of controls $(\bar{\alpha}^n)_{n\in \mathbb{N}}$ generated by the following fictitious play procedure
\begin{equation}
    \label{eq: fictitious play}
    \forall n\in \mathbb{N}\quad \left\{
\begin{array}{cc}
    \bar{m}^n_t=\mathcal{L}(X^{\bar{\alpha}^n}_t,\bar{\alpha}^n_t|\mathcal{F}^0_t)\\
    \alpha^n\in \text{arginf}J(\cdot, (\bar{m}^n_t)_{t\in [0,T]})\\
    \bar{\alpha}^{n+1}=(1-\eta_n)\bar{\alpha}^n+\eta_n\alpha^n
\end{array}\right.
\end{equation}
with $(\eta_n)_{n\in \mathbb{N}}\subset [0,1]$ a non increasing sequence of deterministic steps. This last sequence represents the learning rate of players. The interpretation of this iterative scheme is then quite straightforward: at each iteration players choose the best response $\alpha^n$ to their belief $(\bar{m}^n_t)_{t\in [0,T]}$ on the distribution of other players. Then they update their belief on the control played by the population through
\[\bar{\alpha}^{n+1}=(1-\eta_n)\bar{\alpha}^n+\eta_n\alpha^n.\]
At this point, this update rule directly on controls may seem slightly surprising, we come back later, in section \ref{section: interpretation}, to its interpretation and how it is perfectly natural for learning in MFGs.

This new update rule for the fictitious play converges under a different set of assumptions and can, in particular, handle MFGs of controls \cite{cardaliaguet_Lehalle_2018}, that is, MFGs in which the cost of a single agent depends on the distribution of both the states and actions (controls) of the other players. In our opinion, both this update rule on controls and the one proposed in \cite{cardaliaguet-fictitious-play} directly on probability measures are equally natural, and we emphasize in this article how our main convergence results relate to the formation of equilibria in MFGs. The fact that two possible learning rules coexists is in our opinion, another manifestation of the two known approaches in mean field games: the probabilistic theory \cite{carmona2018probabilistic} relying on stochastic processes and a more analytic point of view which consists in working directly at the level of probability measures \cite{cardaliaguet-fictitious-play}.

\subsection{main results and related litterature}
Our main result concerns the convergence of the fictitious play \eqref{eq: fictitious play} to the set of MFG equilibria satisfying \eqref{eq: mfg equilibrium condition} in an apropriate weak sense due to the presence of common noise. To the author's knowledge this is the first instance of a fictitious play method which can handle general MFGs in which the dependency on the measure arguments and on the control of a single player are not separated. Another novelty of the approach is the fact that we can tackle MFG problems on whole space in a potential, non monotone regime, whereas previous results in this class were restricted to study on the torus \cite{cardaliaguet-fictitious-play}. 

Since the initial findings of \cite{cardaliaguet-fictitious-play}, many articles have built upon these results in several directions. The first focuses on the convergence rate of fictitious play and how to improve it by controlling the learning rate of players \cite{Lavigne_Pfeiffer_2023}. A second direction is the application of this learning procedure to reinforcement learning in MFGs \cite{Elie_Pérolat_Laurière_Geist_Pietquin_2020,Perrin2020Fictitious}. The new learning procedure proposed here could prove useful in this direction, although we do not make any quantitative claims. Finally, a third direction of research concerns the development of fictitious play with common noise \cite{carmona2018probabilistic}. In this case, the lack of compactness due to the presence of such noise makes the analysis difficult. By relying on the theory of relaxed controls in MFGs \cite{djete-measure-valued,Djete_2023,lacker}, we prove that the fictitious play algorithm proposed in this paper converges to an MFG equilibrium (in some relaxed sense), even in the presence of common noise. To the extent of the authors' knowledge, this is the first convergence result for fictitious play with common noise in MFGs outside of a regime in which coefficients are monotone with respect to the measure argument considered in \cite{Perrin2020Fictitious}.

Let us finally comment on some interesting developments in fictitious play for mean field games that we do not address in this article. First, the convergence of this procedure when the state space of players is discretized was treated in \cite{Hadikhanloo_Silva}, which is of interest for both modeling and numerical concerns. Let us also mention the study of adjacent algorithms such as mirror descent, as explored in \cite{hadikhanloo-sorin}. None of these problems have been investigated in the context of the fictitious play proposed in this article or, more generally, for mean field games of controls, and they represent interesting future directions of study.

A second part of this article is also dedicated to explaining how gradient descent in mean field control (MFC) problems converges under very general assumptions to a MFG equilibrium of the associated potential game. It is well known that in this case, the MFG corresponds to the first-order optimality conditions for the MFC \cite{carmona2018probabilistic}, and we prove that a principal agent looking to solve the MFC problem with only local information will eventually converge to a control solution of the potential MFG (which is not necessarily optimal for the MFC). This justifies how MFG equilibria can also arise from a principal agent problem, thereby establishing a link with the formation of equilibria in MFGs.

\subsection{Technical setting and notations}
In this article, we borrow the probabilistic setting of \cite{carmona2018probabilistic} Section 7. Namely, we fix a horizon $T>0$ and consider a complete probability space $(\Omega^0,\mathcal{F}^0,\mathbb{P}^0)$ such that $(\mathcal{F}^0_t)_{0\leq t\leq T}$ is a complete and right continuous filtration, and there exists $(W^0_t)_{0\leq t\leq T}$ a $d^0-$dimensional $\mathcal{F}^0-$Brownian motion. Similarly we are given $(\Omega^1,\mathcal{F},\mathbb{P}^1)$ a complete probability space with a complete and right continuous filtration $(\mathcal{F}^1_t)_{0\leq t\leq T}$ and endowed with a $d-$dimensional $\mathcal{F}^1-$Brownian motion $(B^0_t)_{0\leq t\leq T}$. Then we work on $(\Omega,\mathcal{F},\mathbb{P})$ the completion of the product space $(\Omega^0\otimes \Omega^1,\mathcal{F}^0\otimes \mathcal{F}^1,\mathbb{P}^0\otimes \mathbb{P}^1)$ endowed with the filtration $(\mathcal{F}_t)_{0\leq t\leq T}$ obtained by augmenting the product filtration $(\mathcal{F}^0_t)_{0\leq t\leq T}\otimes(\mathcal{F}^1_t)_{0\leq t\leq T}$ to make it right continuous and completing it.
In what follows we place denote by $\mathcal{H}$ the Hilbert of square integrable random variables $L^2(\Omega,\reels)$ endowed with the canonical inner product
\[\langle \cdot,\cdot\rangle: (X,Y)\mapsto \esp{XY}.\]
Given a time horizon $T$ we may also consider the Hilbert space of progressively measurable random variables in $L^2([0,T],L^2(\Omega,\reels))$
\[\mathcal{H}^T=\left\{ (\alpha_t)_{t\in [0,T]}, \quad \mathcal{F}-\text{adapted and such that } \esp{\int_0^T |\alpha_t|^2dt}<+\infty\right\},\]
endowed with the inner product 
\[\langle\cdot,\cdot\rangle^T:(\alpha,\beta)\mapsto \esp{\int_0^T \alpha_t\beta_tdt}.\]
For the induced norm we use the notation
\[\|\alpha\|_T=\sqrt{\langle \alpha,\alpha\rangle^T}=\sqrt{\esp{\int_0^T |\alpha_t|^2dt}}.\]
We define the space of admissible stochastic flow of measure
\[\mathcal{H}_m=\left\{(m_t)_{t\in [0,T]}\subset L^1(\Omega, L^2([0,T],\mathcal{P}_2(\reels^{2d}))),\quad (m_t)_{t\in [0,T]} \text{ is }\mathcal{F}^0-\text{adapted}\right\}.\]
The following assumption, on the existence of a potential will be in force throughout the article
\begin{hyp}
    \label{hyp: potential mfg}
\begin{enumerate}
    \item[-] There exists a constant $C_b\geq 0$ such that for any $(\mu,m)\in \mathcal{P}_2(\reels^d)\times \mathcal{P}_2(\reels^{2d})$ and for any $t\in [0,T]$
    \[l(t,m),g(\mu)\geq -C_b,\]
    \item[-] for any $(t,x,\alpha,m)\in [0,T]\times \reels^d\times \reels^d\times \mathcal{P}_2(\reels^{2d})$
    \[L(t,x,\alpha,m)=\frac{\delta l}{\delta m}(t,m)(x,\alpha),\quad G(x,\mu)=\frac{\delta g}{\delta \mu}(\mu)(x).\]
\end{enumerate}
\end{hyp}
In addition to Hypothesis \ref{hyp: potential mfg}, we also make the following assumption on the regularity and boundedness of coefficients
\begin{hyp}
\label{hyp: regularite coefficient mfg}
The derivatives $\nabla_x G, \nabla_xL, \nabla_\alpha L$ exists and are continuous in all of their arguments. Moreover 
\[\mu\mapsto \nabla_x G(x,\mu), m\mapsto (\nabla_xL,\nabla_\alpha L)(t,x,\alpha,m),\]
are Lipschitz for the $\mathcal{W}_2$ distance, uniformly in the other arguments and there exists a constant $C\geq 0$  such that for any $(t,x,\alpha,m)\in [0,T]\times \reels^{2d}\times \mathcal{P}_2(\reels^{2d})$, 
\begin{equation}
    \label{eq: linear growth of coefs}
    |f(t,x,\alpha,m)|\leq C\left(1+|x|+|\alpha|+\sqrt{\int_{\reels^{2d}}|(y,a)|^2m(dy,da)}\right),\end{equation}
for $f=\nabla_x L,\nabla_\alpha L,\nabla_x G$. Moreover, there exists a constant $C$ such that $\forall (x,\alpha,\mu,m)\in \reels^{2d}\times\mathcal{P}_2(\reels^d)\times \mathcal{P}_2(\reels^{2d}),$
\begin{gather}
    \label{eq: boundedness of coef in measure at 0}
     \quad G(0_{\reels^d},\mu)+L(0,0_{\reels^{2d}},m)\leq C.\\
     \label{eq: boundedness from below of coefficients}
     -C\leq G(x,\mu), L(x,\alpha,m)
\end{gather}
\end{hyp}
Let us insist that the continuity requirements are quite mild, in particular there is no need for the derivatives $\nabla_x G,\nabla_x L,\nabla_\alpha L$ to be Lipschitz in the finite dimensional arguments. The linear growth assumption on coefficients \eqref{eq: linear growth of coefs} can also be relaxed to a more general polynomial growth, provided that a corresponding coercivity assumption is imposed on the Lagrangian L. Finally, we make the following assumption on the convexity of coefficients in $(x,\alpha)$
\begin{hyp}
    \label{hyp: convexity in x}
    \begin{enumerate}
        \item[-]The function 
\[x\mapsto G(x,\mu),\]
is convex uniformly in $\mu\in \mathcal{P}_2(\reels^d)$. 
\item[-] There exists a constant $c_L>0$ such that the function 
\[(x,\alpha)\mapsto L(t,x,\alpha,m),\]
is convex, $c_L-$strongly in $\alpha$, uniformly in $(t,m)\in [0,T]\times \mathcal{P}_2(\reels^{2d})$.
    \end{enumerate} 
\end{hyp}
Let us insist that no convexity and/or monotonicity is imposed in the measure argument. In particular this assumption is not displacement monotonicity \cite{meszaros}, only convexity in the finite dimensional arguments. Although this is still a somewhat strong assumption, this does allow us to treat the case of a fully degenerate volatility $\sigma=0$ without any additional cost.
\begin{remarque}
We did try to relax this last assumption on the convexity of coefficients with respect to $x$. Although it is possible to consider sufficiently small convexity default with respect to the horizon $T$, or the so called "$\lambda-$convex" setting which derives easily from the convex setting by using the equality
\[\lambda |X^\alpha_T|^2=\lambda |X_0|^2-2\lambda \int_0^T X^\alpha_s\cdot\alpha_sds+ 2\lambda\sigma \int_0^T  X^\alpha_s\cdot dW_s+2\lambda \sigma^0\int_0^T X^\alpha_s\cdot dW^0_s+(\sigma^2+(\sigma^0)^2)t,\]
in which we assume instead that Hypothesis \ref{hyp: convexity in x} is satisfied for some $\lambda\geq 0$ by 
\[x\mapsto G(x,\mu)+\lambda |x|^2, (x,\alpha)\mapsto L(t,x,\alpha,m)-2\lambda x\cdot \alpha,\]
we did not manage to get rid of this assumption in general, even in the case $\sigma>0$. It is unknown to us whether this is a feature specific to the learning rule \eqref{eq: fictitious play} or if the convexity in $x$ can be relaxed by using more subtle arguments.
\end{remarque}

\section{Learning optimal controls through the fictitious play}
\subsection{Preliminary results}
Under Hypothesis \ref{hyp: regularite coefficient mfg}, coefficients are sufficiently regular so that the first order optimality conditions must be satisfied for any optimal control
\begin{lemma}
    \label{lemma: first order optimality mfg}
Given a flow of measure $(m_t)_{t\in [0,T]}\in \mathcal{H}_m$, under Hypothesis \ref{hyp: regularite coefficient mfg}, if a control $(\alpha_t)_{t\in [0,T]}\in \left(\mathcal{H}^T\right)^d$ satisfies
\[(\alpha_t)_{t\in [0,T]}\in \underset{\alpha}{\text{arginf}} J\left(\alpha,\left(m_t\right)_{t\in [0,T]}\right),\]
then 
\begin{gather}
   \label{eq: first order optimality mfg}\forall (\alpha'_t)_{t\in [0,T]}\in \left(\mathcal{H}^T\right)^d\\
    \nonumber \esp{\nabla_x G(X^\alpha_T,m_T)\cdot (X^{\alpha'}_T-X^{\alpha}_T)+\int_0^T \left(\begin{array}{c} \nabla_x L(t,X^\alpha_t,\alpha_t,m_t)\\ \nabla_\alpha L(t,X^\alpha_t,\alpha_t,m_t) \end{array}\right)\cdot \left(\begin{array}{c} X^{\alpha'}_t-X^{\alpha}_t\\ \alpha'_t-\alpha_t\end{array}\right) dt}\geq 0.\end{gather} 
\end{lemma}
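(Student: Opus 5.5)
We prove this by perturbation. Fix $\alpha'\in(\mathcal{H}^T)^d$, set $\beta=\alpha'-\alpha$, and for $\varepsilon\in(0,1]$ put $\alpha^\varepsilon=\alpha+\varepsilon\beta$. The state equation is affine in the control and the noise does not depend on it, so
\[X^{\alpha^\varepsilon}_t=X^\alpha_t+\varepsilon\,(X^{\alpha'}_t-X^{\alpha}_t),\qquad X^{\alpha'}_t-X^\alpha_t=-\int_0^t\beta_s\,ds .\]
Write $\delta X_t$ for this difference. The flow $(m_t)$ is fixed and does not react to the perturbation, so the optimality of $\alpha$ gives
\[0\le \frac{J(\alpha^\varepsilon,m)-J(\alpha,m)}{\varepsilon}.\]
It then suffices to show that the right-hand side converges, as $\varepsilon\downarrow0$, to the left-hand side of \eqref{eq: first order optimality mfg}.

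For the limit we use the fundamental theorem of calculus pathwise, which is possible because $\nabla_xG$, $\nabla_xL$ and $\nabla_\alpha L$ are continuous by Hypothesis \ref{hyp: regularite coefficient mfg}. We get
\[\frac{G(X^{\alpha^\varepsilon}_T,m_T)-G(X^\alpha_T,m_T)}{\varepsilon}=\int_0^1\nabla_xG\big(X^\alpha_T+\theta\varepsilon\,\delta X_T,\,m_T\big)\cdot\delta X_T\,d\theta .\]
The running cost is handled the same way, with the pair $(\nabla_xL,\nabla_\alpha L)$ evaluated at $(t,X^\alpha_t+\theta\varepsilon\,\delta X_t,\alpha_t+\theta\varepsilon\beta_t,m_t)$ and paired with $(\delta X_t,\beta_t)$. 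By continuity, these integrands converge pointwise in $(\omega,t,\theta)$ to the corresponding expressions at $\varepsilon=0$.

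To pass to the limit under $\mathbb{E}\int_0^T\int_0^1$ we apply dominated convergence, using the linear growth bound \eqref{eq: linear growth of coefs}. Since $\varepsilon\le1$, each integrand is bounded by
\[C\Big(1+|X^\alpha_t|+|\delta X_t|+|\alpha_t|+|\beta_t|+M_t\Big)\big(|\delta X_t|+|\beta_t|\big),\qquad M_t=\Big(\int|(y,a)|^2\,m_t(dy,da)\Big)^{1/2},\]
with the analogous bound at time $T$ for the terminal term. This dominating function is integrable for three reasons. First, $\alpha,\beta\in\mathcal{H}^T$. Second, $\mathbb{E}\sup_t|X^\alpha_t|^2<\infty$ by the Cauchy–Schwarz inequality and Doob's inequality applied to the Brownian terms, provided $X_0\in L^2$. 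Third, $\mathbb{E}|\delta X_t|^2\le t\,\|\beta\|_T^2$. The term involving the measure needs separate care. We use that $m_t$ is $\mathcal{F}^0$-measurable and is $\mathcal{P}_2$-valued with $m\in L^1(\Omega,L^2([0,T],\mathcal{P}_2))$. Then the Cauchy–Schwarz inequality gives integrability of the cross term $M_t\big(|\delta X_t|+|\beta_t|\big)$, possibly after conditioning on $\mathcal{F}^0$. If the stated integrability of $m$ does not suffice, we will read $\mathcal{H}_m$ as requiring $\mathbb{E}\int_0^TM_t^2\,dt<\infty$, which is the natural setting when $m_t$ is a conditional law of $(X^{\alpha}_t,\alpha_t)$. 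We also need $J(\alpha,m)$ to be finite so that the difference quotients make sense. This follows from the lower bound \eqref{eq: boundedness from below of coefficients} together with $J(\alpha,m)\le J(0,m)<\infty$. The latter holds by \eqref{eq: boundedness of coef in measure at 0} combined with the growth bound along the segment from $0$.

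The main obstacle is exactly this integrability and domination step. The pointwise convergence is immediate, because convexity is not needed and only continuity of the derivatives is used. Passing to the limit as $\varepsilon\downarrow0$ in $0\le\varepsilon^{-1}\big(J(\alpha^\varepsilon,m)-J(\alpha,m)\big)$ then yields \eqref{eq: first order optimality mfg}.
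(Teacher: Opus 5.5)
Your proposal is correct and takes essentially the same route as the paper. Like the paper, you perturb along $\alpha+\varepsilon(\alpha'-\alpha)$, write the increments of $G$ and $L$ by the fundamental theorem of calculus (the state is affine in the control), and pass to the limit by dominated convergence using the linear growth bound \eqref{eq: linear growth of coefs}. You are also more careful than the paper about the finiteness of $J(\alpha,m)$ and about the measure term; note that conditioning on $\mathcal{F}^0$ does not by itself make $M_t\,(|\delta X_t|+|\beta_t|)$ integrable, so it is your fallback reading $\mathbb{E}\int_0^T M_t^2\,dt<\infty$ that closes the domination step, an assumption the paper uses tacitly and which holds whenever $m_t$ is the conditional law of a square-integrable pair.
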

\begin{proof}
By definition of the infimum, is there exists such a control $(\alpha_t)_{t\in [0,T]}$, for any other control $(\beta_t)_{t\in [0,T]}$ 
\[J(\beta)-J(\alpha)\geq 0.\]
Choosing $\beta_\lambda=\alpha+\lambda (\alpha'-\alpha)$ for some $\lambda \in [0,1]$ and $\alpha'\in (\mathcal{H}^T)^d$, we observe that 
\[\esp{G(X^{\beta_\lambda}_T,m_T)-G(X^\alpha_T,m_T)}=\esp{\int_{\kappa=0}^\lambda \nabla_x G(X^\alpha_T+\kappa (X^{\alpha'}_T-X^\alpha_T),m_T)\cdot (X^{\alpha'}_T-X^\alpha_T)d\kappa}.\]
By definition of $\alpha,\alpha'$ $(X^\alpha_T,X^{\alpha'}_T)\in \mathcal{H}^{2d}$. By the linear growth of $\nabla_x G$ following from Hypothesis \ref{hyp: regularite coefficient mfg}, \[\left(\nabla_x G(X^\alpha_T+\kappa (X^{\alpha'}_T-X^\alpha_T),m_T)\cdot (X^{\alpha'}_T-X^\alpha_T)\right)_{\kappa\in [0,1]}\] is bounded uniformly for $\kappa\in [0,1]$ in $L^1(\Omega,\reels)$. Thus, from the intermediate value theorem and the dominated convergence theorem    
\[\lim_{\lambda \to 0}\frac{1}{\lambda}\esp{G(X^{\beta_\lambda}_T,m_T)-G(X^\alpha_T,m_T)}=\esp{\nabla_x G(X^\alpha_T,m_T)\cdot (X^{\alpha'}_T-X^{\alpha}_T)}.\]
The end result follows by remarking that 
\[\lim_{\lambda\to 0}\frac{1}{\lambda}\left(J(\beta_\lambda)-J(\alpha)\right)\geq 0,\]
and similar arguments as the one we used above for the term in $g$ for the convergence of terms involving $L$.
\end{proof}

Letting $H$ indicates the Fenchel conjugate of $L$
\[\forall (t,x,p,m)\in [0,T]\times \reels^{2d}\times \mathcal{P}_2(\reels^{2d}),\quad H(t,x,p,m)=\sup_\alpha \left\{ \alpha \cdot p-L(t,x,\alpha,m)\right\},\]
if in addition coefficients are convex in the finite diemnsional arguments, then given a flow of measure, the optimal control problem of a single player is convex and admits a unique optimal control which is given explicity by the solution of a FBSDE
\begin{lemma}
    \label{lemma: characterisation of control with fbsde}
    Under Hypotheses \ref{hyp: regularite coefficient mfg} and Hypothesis \ref{hyp: convexity in x}, for any given flow of measure $(m_t)_{t\in [0,T]}\in \mathcal{H}_m$, there exists a unique optimal control $(\alpha^*_t)_{t\in [0,T]}$ solving 
    \[\alpha^*\in \underset{\alpha}{\text{arginf}} J(\alpha,(m_t)_{t\in [0,T]}).\]
    Moreover it is given by 
    \[\forall t\in [0,T], \quad \alpha^*_t=D_p H(t,X_t,U_t,m_t),\]
    where the triple $(X_t,U_t,Z_t)_{t\in [0,T]}$ is the unique strong solution to the FBSDE
    \begin{equation}
        \label{eq: control fbsde fixed measure}
        \left\{
    \begin{array}{c}
        X_t=\displaystyle X_0-\int_0^t D_p H(s,X_s,U_s,m_s)ds+\sigma W_t+\sigma^0W^0_t,\\
        U_t=\displaystyle \nabla_x G(X_T,m_T)+\int_t^T D_x H(s,X_s,U_s,m_s)-\int_t^T Z_s d(W_s,W^0_s).
    \end{array}
    \right.
\end{equation}
\end{lemma}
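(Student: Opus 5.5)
The plan has four steps: existence of a minimiser by convexity and coercivity, uniqueness by strong convexity in $\alpha$, identification of the minimiser through the Pontryagin principle, and unique solvability of the FBSDE. Throughout, the flow $(m_t)_{t\in[0,T]}\in\mathcal{H}_m$ is frozen, so $J(\cdot,(m_t))$ is a functional on the Hilbert space $(\mathcal{H}^T)^d$.

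\textbf{Existence and uniqueness of the optimal control.} The map $\alpha\mapsto X^\alpha$ is affine, since the dynamics are $X^\alpha_t=X_0-\int_0^t\alpha_sds+\sigma W_t+\sigma^0W^0_t$. Hypothesis \ref{hyp: convexity in x} then makes $\alpha\mapsto J(\alpha,(m_t))$ convex, and in fact $c_L$-strongly convex for $\|\cdot\|_T$, because $L$ is $c_L$-strongly convex in $\alpha$ and jointly convex in $(x,\alpha)$. Finiteness and continuity of $J$ on $(\mathcal{H}^T)^d$ follow from the linear growth \eqref{eq: linear growth of coefs} of the gradients together with \eqref{eq: boundedness of coef in measure at 0}. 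Indeed, integrating the gradient from $0$ gives quadratic growth of $G$ and $L$; the measure-dependent term is controlled because $m\in L^1(\Omega,L^2([0,T],\mathcal P_2))$. A continuous, strongly convex functional on a Hilbert space is coercive and weakly lower semicontinuous, so it has a unique minimiser $\alpha^*$. This gives existence and uniqueness in the first claim.

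\textbf{Identification via the FBSDE.} I will follow the stochastic maximum principle, as in \cite{carmona2018probabilistic}. I first solve \eqref{eq: control fbsde fixed measure} and then show that $\hat\alpha_t=D_pH(t,X_t,U_t,m_t)$ is optimal; uniqueness of the minimiser then forces $\alpha^*=\hat\alpha$. The sufficiency argument goes as follows. Take any $\beta\in(\mathcal{H}^T)^d$ and apply Itô's formula to $U_t\cdot(X^\beta_t-X_t)$. The difference $X^\beta-X$ has no martingale part, so only the $dt$ terms survive, and
\[\esp{\nabla_xG(X_T,m_T)\cdot(X^\beta_T-X_T)}=\esp{\int_0^T\bigl(-D_xH\cdot(X^\beta_t-X_t)-U_t\cdot(\beta_t-\hat\alpha_t)\bigr)dt}.\]
By Fenchel duality, $\hat\alpha_t=D_pH$ means $\nabla_\alpha L(t,X_t,\hat\alpha_t,m_t)=U_t$ and $\nabla_xL(t,X_t,\hat\alpha_t,m_t)=-D_xH(t,X_t,U_t,m_t)$. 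The identity above therefore says exactly that the left-hand side of \eqref{eq: first order optimality mfg} vanishes at $\hat\alpha$ for every $\alpha'=\beta$. Convexity of $G$ and $L$ in $(x,\alpha)$ turns this first-order identity into $J(\beta)\ge J(\hat\alpha)$. Conversely, Lemma \ref{lemma: first order optimality mfg} shows that the minimiser satisfies the same variational inequality, which is consistent with this identification.

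\textbf{Main obstacle: well-posedness of \eqref{eq: control fbsde fixed measure}.} Two difficulties arise here. First, the coefficients are only continuous, not Lipschitz, in $(x,p)$. Second, $T$ is arbitrary and $\sigma$ may vanish, so neither small-time contraction nor non-degeneracy is available.

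\emph{Existence.} I will read $U$ off the already-constructed minimiser rather than solve the FBSDE directly. Set $X=X^{\alpha^*}$ and define $(U,Z)$ as the solution of the linear BSDE
\[U_t=\nabla_xG(X_T,m_T)+\int_t^T\nabla_xL(s,X_s,\alpha^*_s,m_s)ds-\int_t^TZ_s\,d(W_s,W^0_s).\]
This BSDE is well posed by martingale representation, because its driver does not depend on $(U,Z)$ and is square integrable by linear growth. Itô duality as above turns \eqref{eq: first order optimality mfg} into $\esp{\int_0^T(\nabla_\alpha L(t,X_t,\alpha^*_t,m_t)-U_t)\cdot(\alpha'_t-\alpha^*_t)dt}\ge0$ for all $\alpha'$. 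Hence $\nabla_\alpha L(t,X_t,\alpha^*_t,m_t)=U_t$, so $\alpha^*_t=D_pH(t,X_t,U_t,m_t)$ and $\nabla_xL=-D_xH$ along the path. This shows that $(X,U,Z)$ solves \eqref{eq: control fbsde fixed measure}.

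\emph{Uniqueness.} Any solution produces, by the sufficiency step, a minimiser. The minimiser is unique, so the $X$ and control components coincide. Then $U$ and $Z$ are determined by the BSDE, whose driver is now fixed, so the solution is unique. This route avoids any Lipschitz requirement on $D_pH$ or $D_xH$.
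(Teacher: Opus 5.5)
Your proposal is correct and is essentially the paper's proof. You obtain a unique minimiser by strong convexity, and existence for the FBSDE by freezing $X=X^{\alpha^*}$, solving the linear adjoint BSDE and reading $\alpha^*_t=D_pH(t,X_t,U_t,m_t)$ off the first-order condition of Lemma \ref{lemma: first order optimality mfg}. Uniqueness follows because any solution yields a control satisfying that condition, which can only be $\alpha^*$: you get this from sufficiency plus uniqueness of the minimiser where the paper cites monotonicity of the gradients, and you also spell out why $U$ and $Z$ then coincide.

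The only slip is a sign. For $dX=-\alpha\,dt$ the adjoint equation is $U_t=\nabla_xG(X_T,m_T)+\int_t^T\nabla_xL(s,X_s,\alpha^*_s,m_s)\,ds-\int_t^TZ_s\,d(W_s,W^0_s)$, i.e.\ with $-\int_t^TD_xH\,ds$, as in your existence step. Your sufficiency identity instead uses the $+\int_t^TD_xH\,ds$ printed in the statement, a typo shared by the paper. With that sign the $\nabla_xL\cdot(X^\beta_t-X_t)$ terms add instead of cancelling, so you should fix the sign consistently in both steps.
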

\begin{proof} 
\quad

\flushleft \textit{Step 1: Uniqueness and existence of an optimal control}\\
Under the present assumptions the functional $J$ is $c_L-$strongly convex on the set of controls with finite cost 
\[\left\{(\alpha_t)_{t\in [0,T]}, \mathcal{F}-\text{adapted and such that } J(\alpha)<+\infty\right\},\]
and consequently there can be at most one optimal control. In particular since for any control in this set,
\[c_L\esp{\int_0^T |\alpha_t|^2dt}\leq 2J(\alpha)+2J(0_{\mathcal{H}^d})-4J(\frac{1}{2}\alpha)<+\infty,\]
if there exists an optimal control it belongs to $\left(\mathcal{H}^T\right)^d$. Moreover by the strong convexity of $J$ and standard optimization results on reflexive Banach spaces, $J$ has a unique minimizer $\alpha^*$.
\flushleft \textit{Step 2: characterisation through the FBSDE}\\
By Lemma \ref{lemma: first order optimality mfg}, any optimal control in $\left(\mathcal{H}^T\right)^d$ must satisfy \eqref{eq: first order optimality mfg}. Introducing the BSDE 
\[U^{\alpha^*}_t=\nabla_x G(X^{\alpha^*}_T,m_T)-\int_t^T \nabla_x L(s,X^{\alpha^*}_s,\alpha^*_s,m_s)ds-\int_t^T Z^{\alpha^*}_s d(W_s,W^0_s),\]
which has a unique strong solution by standard results on BSDEs \cite{Zhang2017BSDE} Theorem 4.3.1, by Ito's Lemma, this implies that 
\[\forall \alpha'\in \left(\mathcal{H}^T\right)^d \quad \esp{\int_0^T \left((\nabla_\alpha L(t,X^{\alpha^*}_t,\alpha^*_t,m_t)-U^{\alpha^*}_t)\cdot(\alpha^*_t-\alpha'_t)\right)dt}\geq 0,\]
and then 
\[\esp{\int_0^T  \left|\alpha^*_t-D_pH(t,X^{\alpha^*}_t,U^{\alpha^*}_t,m_t)\right|^2dt}=0,\]
where we have used that $\alpha \mapsto \nabla_\alpha L(x,\alpha,m)$ is invertible of inverse $p\mapsto D_pH(x,p,m)$. From this equality in $(\mathcal{H}^T)^d$, it follows easily that the triple $(X^{\alpha^*}_t,U^{\alpha^*}_t,Z^{\alpha^*}_t)$ is a strong solution to the FBSDE \eqref{eq: control fbsde fixed measure}. Let us assume by contradiction that there exists another solution $(X_t,U_t,Z_t)_{t\in [0,T]}$ to this FBSDE, then letting 
\[\forall t\in [0,T],\quad \alpha_t=D_pH(t,X_t,U_t,m_t),\]
we observe that this control satisfies \eqref{eq: first order optimality mfg}. However, from the strong monotonicity of $\nabla_xG, \nabla_xL,\nabla_\alpha L$, there can be at most one control satisfying this inequality, which contradicts the fact that $(X_t,U_t,Z_t)_{t\in [0,T]}$ is a different solution.
\end{proof}
\subsection{Convergence of the learning procedure}
Thanks to Lemma \ref{lemma: characterisation of control with fbsde}, we now have the following equivalent representation for the fictitious play
\begin{lemma}
Under Hypotheses \ref{hyp: regularite coefficient mfg}, \ref{hyp: convexity in x}, the fictitious play procedure \eqref{eq: fictitious play} is equivalent to
\end{lemma}
\begin{equation}
    \label{eq: fbsde fictitious play}
\left\{
\begin{array}{l}
    X^n_t=\displaystyle X_0-\int_0^t D_p H(s,X^n_s,U^n_s,\mathcal{L}(\bar{X}^n_s,\bar{\alpha}^n_s|\mathcal{F}^0_s))ds+\sigma W_t+\sigma^0W^0_t,\\
    \displaystyle U^n_t=\nabla_x G(X^n_T,\mathcal{L}(\bar{X}^n_T|\mathcal{F}^0_T))+\int_t^T D_x H(s,X^n_s,U^n_s,\mathcal{L}(\bar{X}^n_s,\bar{\alpha}^n_s|\mathcal{F}^0_s))ds-\int_t^T Z^n_sd(W_s,W^0_s)\\
    \alpha^n_s=D_p H(s,X^n_s,U^n_s,\mathcal{L}(\bar{X}^n_s,\bar{\alpha}^n_s))
\end{array}
\right.
\end{equation}
\begin{gather*}\bar{\alpha}^{n+1}_t=(1-\eta_n)\bar{\alpha}^n_t+\eta_n \alpha^n_t,\\
\bar{X}^{n+1}_t=X_0-\int_0^t \bar{\alpha}^{n+1}_sds+\sigma W_t+\sigma^0 W^0_t\end{gather*}
\begin{proof}
    Given $(\bar{m}^n_t)_{t\in [0,T]}$ we can apply Lemma \ref{lemma: characterisation of control with fbsde} to deduce that there exists a unique optimal control $\alpha^n$ which is characterised by the solution of the FBSDE \eqref{eq: fbsde fictitious play}.
\end{proof}
\subsubsection{Convergence of the difference between optimal controls and averaged control}
In this first part we study the convergence of the sequence defined by $a_n=\|\alpha^n-\bar{\alpha}^n\|^2_T$. Our goal is to prove that in norm this difference tends to $0$. Obviously, if a control $\alpha$ solves the MFG, then the optimal response to the flow $\left(\mathcal{L}(X^\alpha_t,\alpha_t|\mathcal{F}^0_t) \right)_{t\in [0,T]}$ is still $\alpha$. Formally if $(a_n)_{n\in \mathbb{N}}$ tends to 0 then we expect that $(\alpha^n)_{n\in \mathbb{N}}$ should converge to a MFG equilibrium. One of the key argument of the fictitious play in potential MFGs is to use the fact that at each step, the potential $\mathcal{J}$ decreases along the sequence of controls $(\bar{\alpha}^n)_{n\in \mathbb{N}}$, 
\begin{lemma}
    \label{lemma: decreasing potential fictitious play}
    Under Hypotheses\ref{hyp: potential mfg}, \ref{hyp: regularite coefficient mfg} and \ref{hyp: convexity in x}, there exists a constant $C_{Lip}$ depending only on the Lipschitz constants of $\mu\mapsto \nabla_x G(x,\mu)$, $m\mapsto (\nabla_x L,\nabla_\alpha)(x,a,m)$ such that for any $n\in \mathbb{N}$ 
    \[\mathcal{J}(\bar{\alpha}^{n+1})-\mathcal{J}(\bar{\alpha}^n)\leq \left(-c_L \eta_n+C_{Lip}(1+T)\eta_n^2\right)\|\alpha^n-\bar{\alpha}^n\|^2_T.\]
\end{lemma}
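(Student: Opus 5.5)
The plan is to expand the potential $\mathcal{J}$ along the segment between $\bar{\alpha}^n$ and $\bar{\alpha}^{n+1}=\bar{\alpha}^n+\eta_n(\alpha^n-\bar{\alpha}^n)$. Dynamics are linear in the control, so $X^{\bar{\alpha}^n+\lambda(\alpha^n-\bar{\alpha}^n)}_t=\bar{X}^n_t+\lambda(X^n_t-\bar{X}^n_t)$. Set $\beta_\lambda=\bar{\alpha}^n+\lambda(\alpha^n-\bar{\alpha}^n)$ and $\delta X=X^n-\bar{X}^n$, $\delta\alpha=\alpha^n-\bar{\alpha}^n$. Using Hypothesis \ref{hyp: potential mfg}, namely $L=\frac{\delta l}{\delta m}$ and $G=\frac{\delta g}{\delta\mu}$, the chain rule for linear functional derivatives along random variables gives, for $\lambda\mapsto \mathcal{J}(\beta_\lambda)$,
\[\frac{d}{d\lambda}\mathcal{J}(\beta_\lambda)=\esp{\nabla_xG(X^{\beta_\lambda}_T,\mu^\lambda_T)\cdot\delta X_T+\int_0^T\left(\nabla_xL(t,X^{\beta_\lambda}_t,\beta_{\lambda,t},m^\lambda_t)\cdot\delta X_t+\nabla_\alpha L(\cdots)\cdot\delta\alpha_t\right)dt},\]
where $m^\lambda_t=\mathcal{L}(X^{\beta_\lambda}_t,\beta_{\lambda,t}|\mathcal{F}^0_t)$. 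I would justify this by writing $l(m^{\lambda'})-l(m^\lambda)=\int_0^1\int \frac{\delta l}{\delta m}(m^{s})\,d(m^{\lambda'}-m^\lambda)\,ds$, conditioning on $\mathcal{F}^0$ to rewrite this as an expectation over the coupled random variables, and then using dominated convergence with the growth bound \eqref{eq: linear growth of coefs}, as in Lemma \ref{lemma: first order optimality mfg}. The integral of this derivative over $\lambda\in[0,\eta_n]$ gives $\mathcal{J}(\bar{\alpha}^{n+1})-\mathcal{J}(\bar{\alpha}^n)$.

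Next I would split each integrand as $A(\lambda)=[\text{coefficients at }(X^{\beta_\lambda},\beta_\lambda,m^\lambda)]-[\text{coefficients at }(X^{\beta_\lambda},\beta_\lambda,\bar m^n)]$ plus the same coefficients evaluated with the fixed measure $\bar m^n$. In the second part the measure is frozen at $\bar m^n$, so it is exactly $\frac{d}{d\lambda}J(\beta_\lambda,\bar m^n)$ for the individual cost. By Hypothesis \ref{hyp: convexity in x}, $\lambda\mapsto J(\beta_\lambda,\bar m^n)$ is convex with $c_L$-strong convexity in the control direction. Its derivative at $\lambda=1$ is $\ge 0$ by Lemma \ref{lemma: first order optimality mfg} applied at the optimum $\alpha^n$ with $\alpha'=\bar{\alpha}^n$. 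Strong monotonicity of the derivative then gives $\frac{d}{d\lambda}J(\beta_\lambda,\bar m^n)\le -c_L(1-\lambda)\|\delta\alpha\|_T^2$. Integrating over $[0,\eta_n]$ yields at most $-c_L\eta_n\|\delta\alpha\|^2_T+\tfrac{c_L}{2}\eta_n^2\|\delta\alpha\|_T^2$, and the second term is absorbed into $C_{Lip}\eta_n^2$.

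For the error term $A(\lambda)$, I would use the $\mathcal{W}_2$-Lipschitz property of $\nabla_xG,\nabla_xL,\nabla_\alpha L$ in the measure together with the coupling bound $\mathcal{W}_2(m^\lambda_t,\bar m^n_t)^2\le\lambda^2\mathbb{E}[|\delta X_t|^2+|\delta\alpha_t|^2\,|\,\mathcal{F}^0_t]$. Cauchy--Schwarz then bounds $|A(\lambda)|\le C\lambda\,\esp{|\delta X_T|^2+\int_0^T(|\delta X_t|^2+|\delta\alpha_t|^2)dt}$. Since $\delta X_t=-\int_0^t\delta\alpha_s\,ds$, we have $|\delta X_t|^2\le t\int_0^t|\delta\alpha_s|^2ds$, so this is at most $C(1+T)\lambda\|\delta\alpha\|_T^2$, up to powers of $T$ that I would fold into the constant or track more carefully to get exactly $(1+T)$. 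Integrating over $[0,\eta_n]$ gives the $C_{Lip}(1+T)\eta_n^2\|\delta\alpha\|^2_T$ term.

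The main obstacle is the rigorous differentiation of $\lambda\mapsto\mathcal{J}(\beta_\lambda)$ with conditional laws under common noise, specifically passing from the linear derivative $\frac{\delta l}{\delta m}$ to expectations of $\nabla L$ applied to increments. I would handle it by writing $L(t,x',a',m)-L(t,x,a,m)$ as an integral of gradients (as in Lemma \ref{lemma: first order optimality mfg}) and exchanging integrals using the linear growth assumptions. A secondary issue is that the Lipschitz estimate naturally produces $T$ and $T^2$ factors rather than $(1+T)$, which I would resolve by careful bookkeeping or by enlarging $C_{Lip}$.
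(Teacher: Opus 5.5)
Your proposal is correct and is essentially the paper's proof: interpolate linearly between $\bar\alpha^n$ and $\bar\alpha^{n+1}$, freeze the measure at $\bar m^n$ at the cost of an $O(\eta_n^2)\|\alpha^n-\bar\alpha^n\|_T^2$ Lipschitz error, then combine first-order optimality of $\alpha^n$ with convexity in $(x,\alpha)$ to obtain $-c_L\eta_n(1-\eta_n/2)\|\alpha^n-\bar\alpha^n\|_T^2$.

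There is one sign slip. Lemma \ref{lemma: first order optimality mfg} with $\alpha'=\bar\alpha^n$ gives that the derivative of $\lambda\mapsto J(\beta_\lambda,\bar m^n)$ at $\lambda=1$ is $\le 0$, not $\ge 0$. It is in fact $=0$, since the minimization is unconstrained, and $\le 0$ is the sign your monotonicity step actually needs.

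Separately, honest bookkeeping gives a factor $(1+T)^2$ rather than $(1+T)$, because $\int_0^T\|\delta X_t\|^2dt$ genuinely scales like $T^2\|\delta\alpha\|_T^2$. Absorbing $\tfrac{c_L}{2}\eta_n^2$ also makes the constant depend on $c_L$. The paper's proof has both looseness issues as well, and neither matters for Corollary \ref{corol: difference fictitious play tends to 0}.
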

\begin{proof}
Fixing $n\geq 0$, by definition of $g$
\begin{align*}\esp{g(\mathcal{L}(X^{\bar{\alpha}^{n+1}}_T|\mathcal{F}^0_T))-g(\mathcal{L}(X^{\bar{\alpha}^{n}}_T|\mathcal{F}^0_T))}&= \esp{\int_0^1 \nabla_x G(X^\lambda_T,\mathcal{L}(X^\lambda_T|\mathcal{F}^0_T))\cdot (X^{\bar{\alpha}^{n+1}}_T-X^{\bar{\alpha}^{n}}_T)d\lambda }\\
&=\eta_n \esp{\int_0^1 \nabla_x G(X^\lambda_T,\mathcal{L}(X^\lambda_T|\mathcal{F}^0_T))\cdot (X^{\alpha^n}_T-X^{\bar{\alpha}^n}_T)d\lambda}
 \end{align*}
with 
\[\forall \lambda \in [0,1],\quad X^\lambda_T=X^{\bar{\alpha}^{n}}_T+\lambda (X^{\bar{\alpha}^{n+1}}_T-X^{\bar{\alpha}^{n}}_T).\]
by Hypothesis \ref{hyp: regularite coefficient mfg} $\mu \mapsto \nabla_x G(x,\mu)$ is Lipschitz uniformly in $x\in \reels^d $, denoting by $C_{Lip}^g$ this Lipschitz constant we find that
\[\esp{g(\mathcal{L}(X^{\bar{\alpha}^{n+1}}_T|\mathcal{F}^0_T))-g(\mathcal{L}(X^{\bar{\alpha}^{n}}_T|\mathcal{F}^0_T))}\leq\eta_n \esp{\int_0^1 \nabla_x G(X^\lambda_T,\mathcal{L}(X^{\bar{\alpha}^n}_T|\mathcal{F}^0_T))\cdot (X^{\alpha^n}_T-X^{\bar{\alpha}^n}_T)d\lambda}+\frac{C_{Lip}^g}{2}\eta_n^2\|X^{\alpha^n}_T-X^{\bar{\alpha}^n}_T\|^2. \]
Clearly a similar computation holds for 
\[\esp{\int_0^T\left(l(\mathcal{L}(X^{\bar{\alpha}^{n+1}}_t,\bar{\alpha}^{n+1}_t|\mathcal{F}^0_t))-l(\mathcal{L}(X^{\bar{\alpha}^{n}}_t,\bar{\alpha}^n_t|\mathcal{F}^0_t))\right)dt},\]
moreover 
\[\forall t\in [0,T],\quad \|X^{\alpha^n}_t-X^{\bar{\alpha}^n}_t\|^2\leq T \|\alpha^n-\bar{\alpha}^n\|^2_T.\]
Consequently, there exists a constant $C_{Lip}$ depending only on $C^g_{Lip}$ and the Lipschitz constant of $m\mapsto (\nabla_xL,\nabla_\alpha L)(x,a,m)$ such that 
\begin{align*}\mathcal{J}(\bar{\alpha}^{n+1})-\mathcal{J}(\bar{\alpha}^n)\leq& \eta_n \esp{\int_0^1 \nabla_x G(X^\lambda_T,\mathcal{L}(X^{\bar{\alpha}^n}_T|\mathcal{F}^0_T))\cdot (X^{\alpha^n}_T-X^{\bar{\alpha}^n}_T)d\lambda} +C_{Lip}(1+T)\eta_n^2\|\alpha^n-\bar{\alpha}^n\|^2_T\\
&+\eta_n\esp{\int_0^1 \left(\begin{array}{cc} \nabla_x L\\ \nabla_\alpha L\end{array}\right)(t,X^\lambda_t,\alpha^\lambda_t,\mathcal{L}(X^{\bar{\alpha}^n}_t,\bar{\alpha}^n_t|\mathcal{F}^0_t))\cdot \left(\begin{array}{cc} X^{\alpha^n}_t-X^{\bar{\alpha}^n}_t\\ \alpha^n_t-\bar{\alpha}^n_t\end{array}\right)d\lambda},
\end{align*}
with the notation 
\[\forall (\lambda,t)\in [0,1]\times [0,T], \quad \left(\begin{array}{cc}X^\lambda_t\\ \alpha^\lambda_t\end{array}\right)=\left(\begin{array}{cc}X^{\bar{\alpha}^{n}}_t\\ \bar{\alpha}^n_t\end{array}\right)+\lambda \left(\begin{array}{cc} X^{\bar{\alpha}^{n+1}}_t-X^{\bar{\alpha}^{n}}_t\\ \bar{\alpha}^{n+1}_t-\bar{\alpha}^n_t\end{array}\right).\]
By Lemma \ref{lemma: characterisation of control with fbsde}, we remind that the BSDE
\[U_t=\nabla_x G(X^{\alpha^n}_T,\mathcal{L}(X^{\bar{\alpha}^n}_T|\mathcal{F}^0_T))-\int_t^T \nabla_x L(t,X^{\alpha^n}_t,\alpha^n_t,\mathcal{L}(X^{\bar{\alpha}^n}_t,\bar{\alpha}^n_t|\mathcal{F}^0_t))-\int_t^T Z_sd(W,W^0)_s,\]
has a unique solution which satisfies 
 \[\esp{\int_0^T \left|U_t-\nabla_\alpha L(t,X^{\alpha^n}_t,\alpha^n_t,\mathcal{L}(X^{\bar{\alpha}^n}_t,\bar{\alpha}^n_t|\mathcal{F}^0_t))\right|^2dt}=0.\]
In consequence
\begin{align*}\mathcal{J}(\bar{\alpha}^{n+1})-\mathcal{J}(\bar{\alpha}^n)\leq& \eta_n \esp{\int_0^1 (\nabla_x G(X^\lambda_T,\bar{\mu}^n_T)- \nabla_x G(X^{\alpha^n}_T,\bar{\mu}^n_T))\cdot (X^{\alpha^n}_T-X^{\bar{\alpha}^n}_T)d\lambda} +C_{Lip}(1+T)\eta_n^2\|\alpha^n-\bar{\alpha}^n\|^2_T\\
&+\eta_n\esp{\int_0^T\int_0^1 \left(\begin{array}{cc} \nabla_x L(t,X^\lambda_t,\alpha^\lambda_t,\bar{m}^n_t)-\nabla_x L(t,X^{\alpha^n}_t,\alpha^n_t,\bar{m}^n_t)\\ \nabla_\alpha L(t,X^\lambda_t,\alpha^\lambda_t,\bar{m}^n_t)-\nabla_\alpha L(t,X^{\alpha^n}_t,\alpha^n_t,\bar{m}^n_t)\end{array}\right)\cdot \left(\begin{array}{cc} X^{\alpha^n}_t-X^{\bar{\alpha}^n}_t\\ \alpha^n_t-\bar{\alpha}^n_t\end{array}\right)d\lambda dt},
\end{align*}
where we introduced the notations
\[\forall t\in [0,T],\quad \bar{m}^n_t=\mathcal{L}(X^{\bar{\alpha}^n}_t,\bar{\alpha}^n_t|\mathcal{F}^0_t),\quad \bar{\mu}^n_t=\mathcal{L}(X^{\bar{\alpha}^n}_t|\mathcal{F}^0_t).\]
Finally using the convexity of $G$ and of $L$ $c_L-$strongly in $\alpha$, it follows that
\[\mathcal{J}(\bar{\alpha}^{n+1})-\mathcal{J}(\bar{\alpha}^n)\leq \left(-c_L \eta_n(1-\frac{\eta_n}{2}) +C_{Lip}(1+T)\eta_n^2\right)\|\alpha^n-\bar{\alpha}^n\|^2_T,\]
ending the proof.
\end{proof}
Moreover under our assumptions the sequence of controls generated by the fictitious play is bounded uniformly in $\left(\mathcal{H}^T\right)^d$ for $n\in\mathbb{N}$
\begin{lemma}
    \label{lemma: boundedness in H^T, fictitious play}
    Under Hypotheses \ref{hyp: regularite coefficient mfg}, \ref{hyp: convexity in x}, there exists a constant $C_b$ depending only on $G,L$ such that 
    \[\forall n\in \mathbb{N},\quad \|\alpha^n\|_T\leq C_b,\quad \|\bar{\alpha}^n\|_T\leq \max (C_b,\|\bar{\alpha}^0\|_T).\]
\end{lemma}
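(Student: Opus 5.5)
Two things must be shown: a bound on the best responses $\alpha^n$ that does not depend on the flow $(\bar m^n_t)$, and then a propagation of that bound to the averages $\bar\alpha^n$ by convexity of the norm. The second part is immediate. Since $\bar\alpha^{n+1}=(1-\eta_n)\bar\alpha^n+\eta_n\alpha^n$ with $\eta_n\in[0,1]$, the triangle inequality in $(\mathcal{H}^T)^d$ gives
\[\|\bar\alpha^{n+1}\|_T\leq (1-\eta_n)\|\bar\alpha^n\|_T+\eta_n\|\alpha^n\|_T\leq \max\left(\|\bar\alpha^n\|_T,C_b\right).\]
An induction on $n$ then yields $\|\bar\alpha^n\|_T\leq \max(C_b,\|\bar\alpha^0\|_T)$. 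So everything reduces to the uniform estimate on $\alpha^n$.

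For that estimate, fix an arbitrary flow $(m_t)\in\mathcal{H}_m$ and let $\alpha$ be its optimal control from Lemma \ref{lemma: characterisation of control with fbsde}. I would use the first order condition \eqref{eq: first order optimality mfg} from Lemma \ref{lemma: first order optimality mfg} with the test control $\alpha'=0$. This gives
\[\esp{\nabla_x G(X^\alpha_T,m_T)\cdot (X^{\alpha}_T-X^{0}_T)+\int_0^T \nabla_x L(t,X^\alpha_t,\alpha_t,m_t)\cdot(X^\alpha_t-X^0_t)+\nabla_\alpha L(t,X^\alpha_t,\alpha_t,m_t)\cdot\alpha_t\, dt}\leq 0.\]
Next I would use the monotonicity coming from Hypothesis \ref{hyp: convexity in x}. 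Convexity of $G$ in $x$, together with convexity of $L$ in $(x,\alpha)$ that is $c_L$-strong in $\alpha$, gives the following: each gradient term evaluated at $(X^\alpha,\alpha)$ dominates the same term evaluated at $(X^0,0)$, plus $c_L|\alpha_t|^2$. Substituting, one gets
\[c_L\|\alpha\|_T^2\leq -\esp{\nabla_x G(X^0_T,m_T)\cdot(X^\alpha_T-X^0_T)+\int_0^T \nabla_xL(t,X^0_t,0,m_t)\cdot(X^\alpha_t-X^0_t)+\nabla_\alpha L(t,X^0_t,0,m_t)\cdot\alpha_t\,dt}.\]
Since $X^\alpha_t-X^0_t=-\int_0^t\alpha_s ds$, Cauchy--Schwarz bounds the right-hand side by
\[C(1+T)\Big(\sup_t\|f(t,X^0_t,0,m_t)\|\Big)\|\alpha\|_T,\]
where $f$ ranges over the three gradients.

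The main obstacle is this last factor. The linear growth bound \eqref{eq: linear growth of coefs} involves the second moment of $m_t$, and $m_t=\bar m^n_t$ is only controlled through $\bar\alpha^n$. That would make $C_b$ depend on $n$ and break the induction above. To get a constant depending only on $G,L$, I would try to avoid the gradients at $(X^0,0,m)$ and work with costs instead, using the two-sided bounds \eqref{eq: boundedness of coef in measure at 0}--\eqref{eq: boundedness from below of coefficients}. Convexity gives
\[J(0,m)-J(\alpha,m)\geq \langle\text{gradient at }\alpha,\ 0-\alpha\rangle+\tfrac{c_L}{2}\|\alpha\|^2_T,\]
and optimality gives $J(\alpha,m)\leq J(0,m)$.

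Rather than the first-order inequality, I would compare costs directly through strong convexity. Strong convexity of $J(\cdot,m)$, as used in Step 1 of Lemma \ref{lemma: characterisation of control with fbsde}, gives
\[\tfrac{c_L}{8}\|\alpha\|_T^2\leq \tfrac12 J(0,m)+\tfrac12 J(\alpha,m)-J(\tfrac{\alpha}{2},m)\leq J(0,m)+C(1+T),\]
where the last step uses optimality and the lower bound $-C$ on $G,L$. It then remains to bound $J(0,m)$ from above uniformly in $m$. By convexity in $x$, $G(x,\mu)\leq G(0,\mu)+\nabla_xG(x,\mu)\cdot x$, and similarly for $L$. The zero-point terms are bounded by $C$ through \eqref{eq: boundedness of coef in measure at 0}. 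The gradient terms are only controlled by \eqref{eq: linear growth of coefs}, which again brings in the moments of $m$. Closing this gap, for instance by extracting measure-independence from the Lipschitz structure or by carrying the moment of $\bar m^n$ along in the induction, is the step I expect to require the most care.
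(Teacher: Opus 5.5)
Your cost-comparison route is exactly the paper's proof: strong convexity of $J(\cdot,\bar m^n)$ between $0$ and $\alpha^n$, the lower bound $-C$ on $G,L$ for $J(\alpha^n/2,\bar m^n)$, an upper bound on $J(0,\bar m^n)$, then recursion. But you stop at the one step that carries content, so as written this is not a proof.

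The paper handles that step by simply asserting that $J(0,\bar m^n)$ is bounded independently of $\bar m^n$. Your suspicion is justified. \eqref{eq: boundedness of coef in measure at 0} controls $G,L$ only at the origin, and moving to $X^0_t=X_0+\sigma W_t+\sigma^0W^0_t$ costs $\nabla_x G,\nabla_x L$. Their growth \eqref{eq: linear growth of coefs} contains $\sqrt{M_2}$, where $M_2$ is the second moment of the measure. Your first suggested repair cannot help, since $\mathcal{W}_2$-Lipschitz continuity only gives $|\nabla_xG(x,\mu)|\leq|\nabla_xG(x,\delta_0)|+C\,\mathcal{W}_2(\mu,\delta_0)$, which has the same dependence.

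In fact the assertion is false. Take $d=1$, $\sigma>0$, $\sigma^0=0$, $X_0=0$, $l(m)=\frac12\int|a|^2m(dx,da)$ and $g(\mu)=\frac12(\int\psi\,d\mu)^2$ with $\psi(x)=\sqrt{1+x^2}-1$. Then $L=\frac12|\alpha|^2$ and $G(x,\mu)=(\int\psi\,d\mu)\,\psi(x)$, and Hypotheses \ref{hyp: potential mfg}--\ref{hyp: convexity in x} all hold. For $\bar\alpha^0\equiv-M$, the weight $K_M=\int\psi\,d\bar\mu^0_T$ is at least $MT-1$, so $J(0,\bar m^0)=K_M\esp{\psi(\sigma W_T)}\to\infty$. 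Moreover, a Brownian-bridge control shows the optimal value is $O(\log K_M)$. Since $X^\alpha_T=0$ a.s.\ is impossible for $\alpha\in(\mathcal{H}^T)^d$ when $\sigma>0$, weak compactness then forces $\|\alpha^0\|_T\to\infty$ as $M\to\infty$. So the first inequality of the lemma, with $C_b$ independent of $\bar\alpha^0$, cannot be proved.

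Your second suggestion, carrying the moment along, does work, and the reason is quantitative. Start from your bound $G(x,\mu)\leq G(0,\mu)+\nabla_xG(x,\mu)\cdot x$ and apply Cauchy--Schwarz: $\esp{|X^0_T|\sqrt{M_2(\bar\mu^n_T)}}\leq\|X^0_T\|\,\|X^{\bar\alpha^n}_T\|\leq\|X^0_T\|(\|X^0_T\|+\sqrt{T}\|\bar\alpha^n\|_T)$, and similarly for the running cost. Hence $J(0,\bar m^n)$ grows only linearly in $\|\bar\alpha^n\|_T$, and $\frac{c_L}{8}\|\alpha^n\|_T^2\leq A+B\|\bar\alpha^n\|_T$ for constants $A,B>0$ independent of $n$ and $\bar\alpha^0$. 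Thus $\|\alpha^n\|_T\leq\phi(\|\bar\alpha^n\|_T)$ with $\phi(R)=\sqrt{8(A+BR)/c_L}$, which is increasing and sublinear.

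Let $R^*$ be the positive root of $\phi(R)=R$ and $\bar R=\max(R^*,\|\bar\alpha^0\|_T)$. If $\|\bar\alpha^n\|_T\leq\bar R$, then $\|\alpha^n\|_T\leq\phi(\bar R)\leq\bar R$, hence $\|\bar\alpha^{n+1}\|_T\leq\bar R$. This gives $\|\alpha^n\|_T,\|\bar\alpha^n\|_T\leq\max(R^*,\|\bar\alpha^0\|_T)$ for all $n$. That is the second inequality of the lemma with $C_b=R^*$, plus a corrected first one, and it is all that Corollary \ref{corol: difference fictitious play tends to 0} and Theorem \ref{thm: convergence to measure valued mfg} use.

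Two consequences for your write-up. First, the induction must be run jointly on $(\alpha^n,\bar\alpha^n)$, not as ``bound $\alpha^n$ uniformly, then propagate''. Second, your first-order-condition route is a dead end. It only yields $\|\alpha^n\|_T\leq C(1+\|\bar\alpha^n\|_T)$, a linear bound that does not close the recursion unless $C<1$. The cost comparison instead pairs the moment with $\|X^0\|$ rather than with $\|\alpha^n\|_T$, and that is what produces the square root.
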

\begin{proof}
Let us fix $n\in \mathbb{N}$, by definition of $\alpha^n$ 
    \[\alpha^n=\text{arginf}J(\cdot,(\bar{m}^n_t)_{t\in [0,T]}).\]
    In particular using the strong convexity of $J(\cdot, (\bar{m}^n_t)_{t\in [0,T]})$, 
    \[\frac{c_L}{4}\|\alpha^n\|_T^2\leq J(0_{\left(\mathcal{H}^T\right)^d},(\bar{m}^n_t)_{t\in [0,T]})-J(\frac{\alpha^n}{2},(\bar{m}^n_t)_{t\in [0,T]}).\]
    By Hypothesis \ref{hyp: convexity in x}, $J$ is bounded from below by a constant $C_b$ independent of $(\bar{m}^n_t)_{t\in [0,T]}$. Moreover $J(0_{\left(\mathcal{H}^T\right)^d},(\bar{m}^n_t)_{t\in [0,T]})$ is also bounded independently of $(\bar{m}^n_t)_{t\in [0,T]}$. Consequently, there exists a constant $C'_b$ depending only on $G,L$ such that 
    \[\forall n\in \mathbb{N},\quad \|\alpha^n\|_T\leq C'_b.\]
    By recursion, for any $n\in\mathbb{N}$, 
    \[\|\bar{\alpha}^n\|_T\leq \max (C'_b,\|\bar{\alpha}^0\|_T).\]
\end{proof}
\begin{remarque}
To obtain this result, a version of the assumption \eqref{eq: boundedness of coef in measure at 0}, ensuring that $L(0,0,m),g(0,\mu)$ are bounded independently of the measure argument is not necessary. Typically if we assume instead that $x\mapsto \nabla_x G(x,\mu), (x,a)\mapsto (\nabla L)(x,a,m)$ have linear growth uniformly in $(\mu,m)$ then a uniform bound on the sequence of controls $(\alpha^n)_{n\in \mathbb{N}}$ can also be obtained. 
\end{remarque}
We now turn to an estimate on $(\|\alpha^{n+1}-\alpha^n\|^2_T)_{n\in \mathbb{N}}$:
\begin{lemma}
    \label{lemma: bound increment on control, fictitious play}
    Under Hypotheses \ref{hyp: regularite coefficient mfg}, \ref{hyp: convexity in x}, there exists a constant $C$ depending only on the Lipschitz constants of $\nabla_x G,\nabla_x L,\nabla_\alpha L$ in the measure argument and $T$ such that
    \[\forall n\in \mathbb{N}, \quad \|\alpha^{n+1}-\alpha^n\|_T\leq C\eta_n\|\alpha^n-\bar{\alpha}^n\|_T\]
\end{lemma}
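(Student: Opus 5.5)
The plan is to compare the first-order optimality conditions of $\alpha^{n+1}$ and $\alpha^n$. Each is the unique minimizer of a convex problem with a frozen flow, $\bar m^{n+1}$ and $\bar m^n$ respectively. We then use the strong convexity in $\alpha$ on one side and the Lipschitz dependence of the gradients in the measure on the other.

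Concretely, apply Lemma \ref{lemma: first order optimality mfg} twice. For $\alpha^{n+1}$, optimal against $(\bar m^{n+1}_t)$, take $\alpha'=\alpha^n$. For $\alpha^n$, optimal against $(\bar m^n_t)$, take $\alpha'=\alpha^{n+1}$. Summing the two inequalities and writing $\delta X_t=X^{\alpha^{n+1}}_t-X^{\alpha^n}_t$ and $\delta\alpha_t=\alpha^{n+1}_t-\alpha^n_t$ gives
\[
\esp{\big(\nabla_xG(X^{\alpha^{n+1}}_T,\bar\mu^{n+1}_T)-\nabla_xG(X^{\alpha^n}_T,\bar\mu^{n}_T)\big)\cdot\delta X_T+\int_0^T\big(\nabla L(t,X^{\alpha^{n+1}}_t,\alpha^{n+1}_t,\bar m^{n+1}_t)-\nabla L(t,X^{\alpha^n}_t,\alpha^n_t,\bar m^n_t)\big)\cdot(\delta X_t,\delta\alpha_t)dt}\leq 0 .
\]
In each difference we add and subtract the same term evaluated at $(X^{\alpha^{n+1}},\alpha^{n+1})$ with the old measure $\bar m^n$ (resp. $\bar\mu^n$). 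The finite-dimensional part then involves only $\bar m^n$. By Hypothesis \ref{hyp: convexity in x} (convexity of $G$ in $x$, convexity of $L$ jointly, and $c_L$-strong convexity in $\alpha$) this part is bounded below by $c_L\|\delta\alpha\|_T^2$.

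The measure part is controlled by Hypothesis \ref{hyp: regularite coefficient mfg}. Its size is at most $C\,\mathcal W_2(\bar m^{n+1}_t,\bar m^n_t)$, and by coupling through the same random variables,
\[
\mathcal W_2(\bar m^{n+1}_t,\bar m^n_t)^2\le \espcond{|X^{\bar\alpha^{n+1}}_t-X^{\bar\alpha^n}_t|^2+|\bar\alpha^{n+1}_t-\bar\alpha^n_t|^2}{\mathcal F^0_t}.
\]
Since $\bar\alpha^{n+1}-\bar\alpha^n=\eta_n(\alpha^n-\bar\alpha^n)$ and $|X^{\bar\alpha^{n+1}}_t-X^{\bar\alpha^n}_t|\le\eta_n\int_0^t|\alpha^n_s-\bar\alpha^n_s|ds$, Cauchy–Schwarz (first conditionally, then in expectation) bounds the measure part by
\[
C\eta_n(1+T)\|\alpha^n-\bar\alpha^n\|_T\,\big(\|\delta\alpha\|_T+\sup_t\|\delta X_t\|\big).
\]
Since $\sup_t\|\delta X_t\|\le\sqrt T\|\delta\alpha\|_T$, we get
\[
c_L\|\delta\alpha\|_T^2\le C(1+T)^{3/2}\eta_n\|\alpha^n-\bar\alpha^n\|_T\|\delta\alpha\|_T .
\]
Dividing by $\|\delta\alpha\|_T$ (the trivial case excluded) yields the claim, with a constant $C/c_L$ depending on the Lipschitz constants and $T$.

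The main obstacle is the bookkeeping of the add-and-subtract step: the monotonicity of the finite-dimensional part must be taken at a \emph{fixed} measure, so the measure swap has to be placed correctly. The conditional-law Wasserstein bound under common noise also needs care. Everything else is routine Cauchy–Schwarz.
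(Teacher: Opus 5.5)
Your proof is correct, and it takes a different route from the paper's. The paper works with function values. It applies the strong convexity of $J(\cdot,\bar m^n)$ at the midpoint $\frac12(\alpha^n+\alpha^{n+1})$, and uses the minimality of $\alpha^n$ against $\bar m^n$ to drop one term. It then moves the increment $J(\alpha^{n+1},\cdot)-J(\frac12(\alpha^n+\alpha^{n+1}),\cdot)$ from $\bar m^n$ to $\bar m^{n+1}$ with a difference-of-differences bound $|(J(\alpha,m^1)-J(\beta,m^1))-(J(\alpha,m^2)-J(\beta,m^2))|\le C\|\alpha-\beta\|_T\, d(m^1,m^2)$. It concludes with the minimality of $\alpha^{n+1}$ against $\bar m^{n+1}$.

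You instead pair the two variational inequalities of Lemma \ref{lemma: first order optimality mfg} and use strong monotonicity of the gradients at a frozen measure. This is the standard stability argument for strongly monotone variational inequalities.

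The two routes are close. The paper states its difference-of-differences bound without proof. Proving it means writing each increment as an integral of $\nabla_x G,\nabla_x L,\nabla_\alpha L$ along a segment and applying exactly the Lipschitz-in-measure and coupling estimate you carry out. So your route is more self-contained and gives an explicit constant. It also applies verbatim to any controls satisfying the first-order conditions, for instance controls built from the FBSDE system, not only to genuine minimizers. The paper's route never has to handle two optimality systems at once, and it matches the value-based style of Lemma \ref{lemma: decreasing potential fictitious play}.

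In both approaches the constant also depends on $c_L$. The statement omits this, but you correctly keep $C/c_L$.

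One harmless bookkeeping slip: integrating $\|\delta X_t\|$ over $[0,T]$ puts an extra factor $\sqrt T$ in front of $\sup_t\|\delta X_t\|$ in your intermediate bound. The resulting power of $(1+T)$ is therefore $2$ rather than $3/2$, which does not affect the claim.
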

\begin{proof}
    Fixing $n\in \mathbb{N}$, by the strong convexity of $J$ we deduce that 
    \[\frac{c_L}{4}\|\alpha^{n+1}-\alpha^n\|^2_T\leq \frac{1}{2}\left(J(\alpha^{n+1},(\bar{m}^n_t)_{t\in [0,T]})+J(\alpha^n,(\bar{m}^n_t)_{t\in [0,T]})\right)-J\left(\frac{\alpha^n+\alpha^{n+1}}{2},(\bar{m}^n_t)_{t\in [0,T]}\right).\]
    By definition of $\alpha^n$ as a minimizer we get 
    \[\frac{c_L}{4}\|\alpha^{n+1}-\alpha^n\|^2_T\leq \frac{1}{2}\left(J(\alpha^{n+1},(\bar{m}^n_t)_{t\in [0,T]})-J\left(\frac{\alpha^n+\alpha^{n+1}}{2},(\bar{m}^n_t)_{t\in [0,T]}\right)\right).\]
    We now remark that for two controls $\alpha,\beta$ such that
    \[\|\alpha\|_T,\|\beta\|_T<+\infty ,\]
    there exists a constant $C_{Lip}$ depending only on the Lipschitz constants of $\nabla_xg,\nabla_xL,\nabla_\alpha L$ in the measure argument such that for any two flow of measures  $(m^1_t)_{t\in[0,T]},(m^2_t)_{t\in [0,T]} \subset \mathcal{P}_2(\reels^{2d})$
    \begin{gather*}\left|(J(\alpha,(m^1_t)_{t\in[0,T]})-J(\beta,(m^1_t)_{t\in[0,T]}))-(J(\alpha,(m^2_t)_{t\in[0,T]})-J(\beta,(m^2_t)_{t\in[0,T]}))\right|\\\leq C_{Lip}(1+T)\|\alpha-\beta\|_T\sqrt{\esp{\left(\mathcal{W}^2_2(m^1_T,m^2_T)+\int_0^T \mathcal{W}^2_2(m^1_t,m^2_t)dt\right)}}.\end{gather*}
    In consequence 
    \begin{align*}\frac{c_L}{4}\|\alpha^{n+1}-\alpha^n\|^2_T\leq& \frac{1}{2}\left(J(\alpha^{n+1},(\bar{m}^{n+1}_t)_{t\in [0,T]})-J\left(\frac{\alpha^n+\alpha^{n+1}}{2},(\bar{m}^{n+1}_t)_{t\in [0,T]}\right)\right)\\
    &+\frac{1}{2}C_{Lip}(1+T)^2\|\alpha^{n+1}-\alpha^n\|_T\left(\|\bar{\alpha}^{n+1}_T-\bar{\alpha}^n_T\|+\|\bar{\alpha}^{n+1}-\bar{\alpha}^n\|_T\right)
    \end{align*}
    Finally using the definition of $\alpha^{n+1}$ as a minimizer we get that there exists a constant $C$ depending only on the Lipschitz constants of $\nabla_x G,\nabla_x L,\nabla_\alpha L$ in the measure argument and $T$ such that
    \[\forall n\in \mathbb{N}, \quad \|\alpha^{n+1}-\alpha^n\|_T\leq C\eta_n\|\alpha^n-\bar{\alpha}^n\|_T\]
\end{proof}
Before turning to the convergence of the fictitious play procedure, we need one final technical lemma 
\begin{lemma}
    \label{lemma: step size sequence}
    Let $(a_n)_{n\in \mathbb{N}}\subset \reels$ and $(\eta_n)_{n\in\mathbb{N}}\subset [0,1]$ be a sequence of decreasing step size satisfying 
    \[\sum_{n\in \mathbb{N}}\eta_n=+\infty, \quad \sum_{n\in \mathbb{N}}\eta_n^2<+\infty.\]
    If 
    \[\left|\sum_{n\in \mathbb{N}} \eta_n a_n\right|<+\infty, \]
    and 
    \[\exists C>0,\quad \forall n\in \mathbb{N} \quad |a_{n+1}-a_n|\leq C\eta_n,\]
    then 
    \[\lim_{n\to \infty} a_n=0.\]
\end{lemma}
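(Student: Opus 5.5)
The plan is to argue by contradiction. The key tool is the Cauchy criterion for the convergent series $\sum_n \eta_n a_n$, applied to blocks of consecutive indices on which $a_k$ has a fixed sign and stays bounded away from $0$. On such a block there is no cancellation, which is what lets us avoid assuming absolute convergence.

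Suppose $a_n\not\to 0$. Then there is $\varepsilon>0$ with $|a_n|>2\varepsilon$ for infinitely many $n$. Up to replacing $(a_n)$ by $(-a_n)$, which preserves all the hypotheses, we may assume $a_n>2\varepsilon$ for infinitely many $n$.

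By the Cauchy criterion, choose $N$ such that $\left|\sum_{k=p}^{q}\eta_k a_k\right|<\varepsilon^2/C$ for all $q\geq p\geq N$. Fix $n\geq N$ with $a_n>2\varepsilon$, and let $m>n$ be the first index with $a_m\leq\varepsilon$.

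First, $m$ is finite. Otherwise $a_k>\varepsilon$ for all $k\geq n$, so $\sum_{k=n}^{q}\eta_k a_k\geq \varepsilon\sum_{k=n}^{q}\eta_k\to+\infty$ as $q\to\infty$, because $\sum\eta_k=+\infty$. This contradicts the Cauchy bound.

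Next, we bound the length of the block $\{n,\dots,m-1\}$ from below, using the increment hypothesis and a telescoping sum:
\[\varepsilon< a_n-a_m\leq \sum_{k=n}^{m-1}|a_{k+1}-a_k|\leq C\sum_{k=n}^{m-1}\eta_k .\]

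Since $a_k>\varepsilon$ for every $k\in\{n,\dots,m-1\}$, and $\eta_k\geq 0$, all terms of the block are positive and
\[\sum_{k=n}^{m-1}\eta_k a_k\geq \varepsilon\sum_{k=n}^{m-1}\eta_k\geq \frac{\varepsilon^2}{C}.\]
This contradicts the choice of $N$. Hence $\limsup_n |a_n|\leq 2\varepsilon$ for every $\varepsilon>0$, that is, $a_n\to 0$.

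The step that needs care is the handling of sign changes. The series $\sum\eta_n a_n$ is only assumed convergent, not absolutely convergent, so one cannot simply argue that the tail sum of $\eta_k|a_k|$ is small. The device above resolves this. We stop the block at the first time $a_k$ drops to $\varepsilon$, so $a_k$ never changes sign inside it. The bounded-increment hypothesis then forces that block to carry $\eta$-mass at least $\varepsilon/C$. The Cauchy criterion applies to arbitrary finite blocks, so it rules this out.

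Note that this argument uses neither $\sum\eta_n^2<+\infty$ nor the monotonicity of $(\eta_n)$. Those hypotheses are presumably included for the later application.
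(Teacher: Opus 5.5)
Your argument is correct, and it takes a different route from the paper's. The paper passes to continuous time. With $t_n=\sum_{k<n}\eta_k$, it interpolates the $a_n$ linearly into a $C$-Lipschitz function $f$ on $[0,+\infty)$ with $f(t_n)=a_n$, and computes $\int_{t_n}^{t_{n+1}}f=\eta_na_n+\tfrac12(a_{n+1}-a_n)\eta_n$. It then uses $\sum\eta_n^2<+\infty$ to make the trapezoidal corrections, bounded by $\tfrac12C\eta_n^2$, summable, so that $\int_0^{+\infty}f$ converges, and concludes with Barbalat's lemma.

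Your block argument is in effect the proof of Barbalat's lemma carried out directly on the Riemann sums. The increment bound $|a_{n+1}-a_n|\le C\eta_n$ replaces uniform continuity. The block $\{n,\dots,m-1\}$, which has $\eta$-mass larger than $\varepsilon/C$ and on which $a_k>\varepsilon$, replaces the interval of fixed length on which $f$ stays above $\varepsilon$. Because you never compare $\sum\eta_na_n$ with an integral, there is no correction term to absorb. That is exactly why $\sum\eta_n^2<+\infty$ and the monotonicity of $(\eta_n)$ drop out of your proof. You also avoid a small step the paper leaves implicit: upgrading convergence of $\int_0^{t_N}f$ along the grid to convergence of $\int_0^Tf$ as $T\to+\infty$, which Barbalat's lemma requires.

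The paper's route is shorter if Barbalat's lemma is taken as known, and it makes the ODE intuition behind the step-size conditions visible. Your sharper statement does not relax the assumptions of the paper's main results, since there $\sum\eta_n^2<+\infty$ is needed anyway to obtain $\sum\eta_na_n<+\infty$ from the decrease of the potential.
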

\begin{proof}
We first introduce the sequence of time steps $(t_n)_{n\in\mathbb{N}}$ defined by 
\[t_0=0,\quad \forall n\in \mathbb{N} \quad t_{n+1}=\sum_{k=0}^n \eta_k.\]
By assumption on $(\eta_n)_{n\in \mathbb{N}}$, 
\[\lim_{n\to \infty}t_n=+\infty.\]
Thus, the function $f$ defined by 
\[f(0)=a_0, \quad \forall n\in \mathbb{N}, \forall t\in [t_{n+1},t_n], \quad f(t)=\frac{a_{n+1}-a_n}{\eta_n}(t-t_n)+a_n,\]
is continuous and well defined on $[0,+\infty)$. Moreover 
\begin{align*}\int_{t_n}^{t_{n+1}} f(t)dt&=\eta_n a_n+\frac{1}{2}(a_{n+1}-a_n)\eta_n.\\
    &\leq \eta_n a_n+\frac{1}{2}C\eta_n^2.
\end{align*}
Naturally 
\[\left|\int_0^{+\infty}f(t)dt\right|=\left|\sum_{n\in \mathbb{N}}\int_{t_n}^{t_{n+1}} f(t)dt \right|\leq \left|\sum_{n\in \mathbb{N}}\eta_n a_n\right|+\frac{1}{2}C\sum_{n\in \mathbb{N}}\eta_n^2<+\infty.\]
Since $f$ is Lipschitz with constant $C$, it is uniformly continuous, as its Riemann integral on $[0,+\infty)$ converges to a finite limit, by Barbalat's Lemma
\[\lim_{t\to \infty }f(t)=0.\]
In particular 
\[\lim_{n\to \infty} f(t_{n+1})=\lim_{n\to \infty}a_n=0.\]
\end{proof}
We now have all the tools at hand to conclude on the convergence of the sequence $(\|\alpha^n-\bar{\alpha}^n\|_T)_{n\in \mathbb{N}}$, 
\begin{corol}
    \label{corol: difference fictitious play tends to 0}
  Under Hypotheses \ref{hyp: potential mfg}, \ref{hyp: regularite coefficient mfg} and \ref{hyp: convexity in x}, letting the sequence $(\alpha^n,\bar{\alpha}^n)_{n\in \mathbb{N}}$ be defined as in \eqref{eq: fictitious play}, if 
  \[\sum_{n\in \mathbb{N}}\eta_n=+\infty, \text{ and } \sum_{n\in \mathbb{N}}\eta_n^2<+\infty,\]
  then 
  \[\lim_{n\to \infty} \|\alpha^n-\bar{\alpha}^n\|_T=0.\]
\end{corol}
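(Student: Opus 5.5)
We apply Lemma \ref{lemma: step size sequence} to the sequence $a_n=\|\alpha^n-\bar{\alpha}^n\|^2_T$; two hypotheses need to be checked. The first is summability of $\eta_n a_n$, and it comes from the decrease of the potential. Since $\sum_n\eta_n^2<+\infty$, we have $\eta_n\to 0$. Hence there is $N$ such that $C_{Lip}(1+T)\eta_n\leq c_L/2$ for all $n\geq N$. Lemma \ref{lemma: decreasing potential fictitious play} then gives, for $n\geq N$,
\[\frac{c_L}{2}\eta_n a_n\leq \mathcal{J}(\bar{\alpha}^n)-\mathcal{J}(\bar{\alpha}^{n+1}).\]
Summing from $N$ to $M$ telescopes the right-hand side, giving
\[\frac{c_L}{2}\sum_{n=N}^M\eta_na_n\leq \mathcal{J}(\bar{\alpha}^N)-\mathcal{J}(\bar{\alpha}^{M+1})\leq \mathcal{J}(\bar{\alpha}^N)+C_b(1+T),\]
because $l,g\geq -C_b$ by Hypothesis \ref{hyp: potential mfg}. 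We also need $\mathcal{J}(\bar{\alpha}^N)<+\infty$. This follows from the finiteness of $\mathcal{J}(\bar{\alpha}^0)$ (assumed for the initial control, or derived from the growth bounds of Hypothesis \ref{hyp: regularite coefficient mfg} together with $\bar{\alpha}^0\in(\mathcal{H}^T)^d$) and the finitely many increments controlled by Lemma \ref{lemma: decreasing potential fictitious play}, which are finite thanks to Lemma \ref{lemma: boundedness in H^T, fictitious play}. Since $a_n\geq 0$, we conclude $\sum_n\eta_na_n<+\infty$.

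The second hypothesis is the increment bound $|a_{n+1}-a_n|\leq C\eta_n$. Write $d_n=\alpha^n-\bar{\alpha}^n$. Then
\[d_{n+1}=\alpha^{n+1}-\alpha^n+(1-\eta_n)d_n,\]
so $\|d_{n+1}-d_n\|_T\leq \|\alpha^{n+1}-\alpha^n\|_T+\eta_n\|d_n\|_T\leq (C+1)\eta_n\|d_n\|_T$ by Lemma \ref{lemma: bound increment on control, fictitious play}. By Lemma \ref{lemma: boundedness in H^T, fictitious play}, $\|d_n\|_T\leq K:=C_b+\max(C_b,\|\bar{\alpha}^0\|_T)$ uniformly in $n$. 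Therefore
\[|a_{n+1}-a_n|=\big|\|d_{n+1}\|_T-\|d_n\|_T\big|\,\big(\|d_{n+1}\|_T+\|d_n\|_T\big)\leq 2K(C+1)K\eta_n,\]
which is the required Lipschitz-type increment bound.

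With both hypotheses verified, Lemma \ref{lemma: step size sequence} yields $a_n\to 0$, hence $\|\alpha^n-\bar{\alpha}^n\|_T\to 0$. Two details remain. First, the lemma's proof only uses the tail of the series, so the finitely many indices below $N$ are harmless. Second, the lemma requires a decreasing step sequence, which is part of the standing assumption on $(\eta_n)$. The main obstacle is the summability step, specifically the finiteness of $\mathcal{J}(\bar{\alpha}^0)$, which I would settle using the quadratic growth of $G$ and $L$ implied by the linear growth of their gradients and the bounds \eqref{eq: boundedness of coef in measure at 0}. The increment bound itself is routine.
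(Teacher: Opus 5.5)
Your proof is correct and follows essentially the same route as the paper. You apply Lemma~\ref{lemma: step size sequence} to $a_n=\|\alpha^n-\bar{\alpha}^n\|_T^2$, get $\sum_n\eta_na_n<+\infty$ from Lemma~\ref{lemma: decreasing potential fictitious play} and the lower bound on $\mathcal{J}$, and get $|a_{n+1}-a_n|\leq C\eta_n$ from Lemmas~\ref{lemma: bound increment on control, fictitious play} and~\ref{lemma: boundedness in H^T, fictitious play}. The differences are cosmetic: you absorb the $\eta_n^2$ term into $-c_L\eta_n$ for large $n$ rather than bounding $\sum_n\eta_n^2a_n\leq \sup_k a_k\sum_n\eta_n^2$ as the paper does, and you make explicit the finiteness of $\mathcal{J}(\bar{\alpha}^0)$, which the paper uses tacitly.
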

\begin{proof}
    First by Lemma \ref{lemma: boundedness in H^T, fictitious play}, the sequence $(\alpha^n,\bar{\alpha}^n)_{n\in \mathbb{N}}$ is bounded in $\left(\mathcal{H}^T\right)^{2d}$. Moreover by Lemma \ref{lemma: decreasing potential fictitious play}, for any $n\in \mathbb{N}$
    \[\mathcal{J}(\bar{\alpha}^{n+1})-\mathcal{J}(\bar{\alpha}^n)\leq \left(-c_L \eta_n+C_{Lip}(1+T)\eta_n^2\right)\|\alpha^n-\bar{\alpha}^n\|^2_T.\]
    Summing thoses inequalities over $n\in \mathbb{N}$ yields that for any $n\in \mathbb{N}$
    \[c_L \sum_{k=0}^n \eta_k\|\alpha^k-\bar{\alpha}^k\|^2_T\leq \mathcal{J}(\bar{\alpha}^0)-\mathcal{J}(\bar{\alpha}^{n+1})+C\sum_{k=0}^n \eta_k^2,\]
    for some constant $C$ independent of $n\in \mathbb{N}$, as we have used the fact that the sequence $(\|\alpha^n-\bar{\alpha}^n\|_T^2)_{n\in \mathbb{N}}$ is uniformly bounded in $\reels$. By assumption $l,g$ are bounded from below and consequently so is $\mathcal{J}$, moreover 
    \[\sum_{n\in \mathbb{N}}\eta_n^2<+\infty,\]
    and we deduce that 
    \[\sum_{n\in \mathbb{N}}\eta_n \|\alpha^n-\bar{\alpha}^n\|^2_T<+\infty.\]
    We now observe that for fixed $n\in \mathbb{N}$
    \begin{align*}
        \|\alpha^{n+1}-\bar{\alpha}^{n+1}\|^2_T-\|\alpha^{n}-\bar{\alpha}^{n}\|_T&=\langle \alpha^{n+1}-\bar{\alpha}^{n+1}+\alpha^{n}-\bar{\alpha}^{n}, \alpha^{n+1}-\bar{\alpha}^{n+1}-(\alpha^{n}-\bar{\alpha}^{n})\rangle^T\\
        &\leq \left(\|\alpha^{n+1}-\alpha^{n}\|_T+\|\bar{\alpha}^{n+1}-\bar{\alpha}^n\|_T\right)\|\alpha^{n+1}-\bar{\alpha}^{n+1}-(\alpha^{n}-\bar{\alpha}^{n})\|_T\\
        &\leq C\eta_n,
    \end{align*}
    where the last line holds from Lemma \ref{lemma: bound increment on control, fictitious play} and the boundednes of $(\alpha^n,\bar{\alpha}^n)_{n\in \mathbb{N}}$ in $\left(\mathcal{H}^T\right)^{2d}$. We may finally apply Lemma \ref{lemma: step size sequence} with 
    \[\forall n\in \mathbb{N}, \quad a_n=\|\alpha^n-\bar{\alpha}^n\|^2_T, \]
    ending the proof. 
\end{proof}
\subsubsection{Convergence of the fictitious through relaxed controls}
The main difficulty from the addition of common noise is the lack of compactness it causes on the sequence $(\bar{m}^n_t)_{t\in [0,T]}$ which is a measure valued stochastic process. To deal with this issue we weaken the notion of equilibrium considered and use the notion of measure valued MFG equilibrium introduced in \cite{djete-measure-valued}. Borrowing notations from this very article, we denote by $\mathbb{M}$, the set of all probability measures in $\mathcal{P}([0,T]\times \mathcal{P}(\reels^{2d}))$ whose marginal distribution on $[0,T]$ is given by Lesbegue measure. We define
\[\bar{\Omega}=\mathcal{C}([0,T],\mathcal{P}(\reels^d))\times\mathbb{M}\times \mathcal{C}([0,T],\reels^d),\]
$(\mu,\Lambda,B^0)$ to be the canonical element on $\bar{\Omega}$ and $\left(\bar{\mathcal{F}}_t\right)_{t\in [0,T]}$ the associated canonical filtration, as defined in \cite{djete-measure-valued} section 2.3.1. We do not insist too much on measurability concerns which are already treated in the same section of Djete's article. In particular there exists a disintegration of $\Lambda$ of the form 
\[\Lambda(dm,dt)=\Lambda_t(dm)dt,\]
with $(\Lambda_t)_{t\in [0,T]}$ a $\bar{\mathcal{F}}-$predictable process. For a function $f\in \mathcal{C}^2_b(\reels^d,\reels)$, we denote by 
\begin{align*}N_t(f)&=\int_{\reels^d}f(x-\sigma^0 B^0_t)\mu_t(dx)-\int_{\reels^d}f(x)\mu_0(dx)\\
&-\int_0^t\int_{\mathcal{P}(\reels^{2d})} \left(\nabla_x f(x-B^0_s)\cdot \alpha +\frac{1}{2}\sigma^2 \Delta_x f(x-B^0_s)\right)m(dx,d\alpha)\Lambda (ds,dm),
\end{align*}
Let us remark that $N_t(f)$ is not finite in general for elements of $\mathcal{P}([0,T]\times \mathcal{P}(\reels^{2d}))$ due to the unbounded drift, however this is not a problem if $\bar{\Omega}$ is endowed with a probability measure $\mathbb{Q}$ such that \[\int_{[0,T]\times \mathcal{P}(\reels^{2d})}\int_{\reels^{2d}}|(x,a)|m(dx,da)\Lambda(ds,dm)<+\infty,\quad \mathbb{Q}-a.s. \]
Finally for any measure $\mu\in \mathcal{P}(\reels^d)$, we let 
\[\mathbb{Z}_\mu=\left\{m\in \mathcal{P}(\reels^{2d}), \quad \int_{a\in \reels^d}m(dx,da)=\mu(dx)\right\}.\]
\begin{definition}
    \label{def: measure valued rule}
Given an initial distribution $m_0$ for players, a probability $\mathcal{Q}\in \mathcal{P}(\bar{\Omega})$, is a measure valued rule if
\begin{enumerate}
    \item[-]$(\mathbb{Q}(\mu_0=m_0))=1$.
    \item[-] $(B^0_t)_{t\in [0,T]}$ is a $(\mathbb{Q},(\bar{\mathcal{F}}_t)_{t\geq 0})$ Brownian motion, \[\int_{[0,T]\times \mathcal{P}(\reels^{2d})}\int_{\reels^{2d}}|(x,a)|m(dx,da)\Lambda(ds,dm)<+\infty,\quad \mathbb{Q}-a.s, \]and for any $f\in \mathcal{C}^2_b(\reels^d,\reels)$, 
    \[N_t(f)=0 \quad \mathbb{Q}-\text{a.s.}\]
    \item[-] There exists a measurable subset $N\subset [0,T]\times \bar{\Omega}$ such that 
    \[\int_{(t,\omega)\in N} \mathbb{Q}(d\omega)dt=0,\]
    and 
    \[\Lambda_t(\mathbb{Z}_{\mu_t})=1, \quad \forall (t,\omega)\in [0,T]\times \bar{\Omega}\setminus N\]
\end{enumerate}
\end{definition}
This definition may seem quite heavy, let us insist that this is just a way of restraining ourselves to the set of probability measures $\mathbb{Q}$ which are natural in our setting: The first condition means the inital distribution of players must be given by $\mu_0$, the second and third conditions are just a way to assert that the state equation must be satisfied in some weak sense. It is easy to check that for any admissible control $\alpha$ those conditions are verified with 
\begin{equation}
    \label{eq: from cost to lambda}
    \Lambda(dm,dt)=\delta_{\mathcal{L}(X^\alpha_t,\alpha_t|\mathcal{F}^0_t)}(dm)dt,
\end{equation}
under $\mathbb{P}$. For readers who are not familiar with the notion of relaxed controls, let us remark that for a given control, the cost function can be expressed in term of this $\Lambda$ by 
\[\esp{\int_0^T L(t,X^\alpha_t,\alpha_t,\mathcal{L}(X^\alpha_t,\alpha_t))}=\esp{\int_{[0,T]\times \mathcal{P}(\reels^{2d})}\int_{\reels^{2d}}L(t,x,\alpha,m)m(dx,d\alpha)\Lambda(dt,dm)}.\]
The choice of looking directly at this $\Lambda$ insteads of controls, is because it is much easier to get compactness on a sequence of measures. On the other hand, people who are familiar with relaxed controls but not in the context of mean field games may be surprised that there is still an expectation on the rightside of \eqref{eq: from cost to lambda}. Unfortunately, due to the presence of common noise $\Lambda$ is in general stochastic, this is why formally we work at the level of $\mathcal{L}(\Lambda)$, and the definition of an admissible measure rule Definition \ref{def: measure valued rule} is so convoluted.
\begin{definition}
    \label{def: measure valued mfg}
    A measure valued rule $\mathbb{Q}\in \mathcal{P}(\bar{\Omega})$ is said to be a measure valued MFG equilibrium if for any Lipschitz, bounded, progressively measurable function $\alpha:[0,T]\times \reels^d\times \mathbb{M}\to \reels^d$, letting $(\mu^\alpha_t)_{t\in [0,T]}$ be the solution to 
    \[d\mu^\alpha_t=-\text{div}\left(\left(\alpha(t,x,\Lambda)+\sigma^0dW^0_t\right)\mu^\alpha_t\right)+\frac{1}{2}(\sigma^2+(\sigma^0)^2) \Delta_x \mu^\alpha_t, \quad \mu^\alpha_0=\mu_0 \quad P-\text{a.s,}\]
    the following holds 
    \begin{gather*}\mathbb{E}^\mathbb{Q}\left[\int_{\reels^d} G(x,\mu_T)\mu_T(dx)+\int_0^T\int_{m\in \mathbb{M}}\int_{(x,a)\in \reels^{2d}}L(t,x,a,m)m(dx,d\alpha)\Lambda(dt,dm)\right]\\
    \leq \mathbb{E}^\mathbb{Q}\left[\int_{\reels^d} G(x,\mu_T)\mu^\alpha_T(dx)+\int_0^T\int_{m\in \mathbb{M}}\int_{x\in\reels^d}L(t,x,\alpha(t,x,\Lambda),m)\mu^\alpha_t(dx)\Lambda(dt,dm)\right]
    \end{gather*}
\end{definition}
\begin{remarque}
    The fact that we only consider bounded and Lipschitz feedback controls as admissible is mainly a choice of convenience. This will make proofs easier and is non consequential as less regular controls may be approached by controls in this class. Typically it is straightforward to prove that the infimum reached is the same whether or not we require controls to be bounded. Indeed an unbounded control $\alpha$ can be approached by a sequence of bounded controls $\alpha^n=\alpha \wedge (-n)\vee n$ and the conclusion follows from Fatou's Lemma. Removing the Lipschitz assumption is possible but slightly more technical as in this case, there might be several solutions to the Fokker-Planck equation
    \[d\mu^\alpha_t=-\text{div}\left(\left(\alpha(t,x,\Lambda)+\sigma^0dW^0_t\right)\mu^\alpha_t\right)+\frac{1}{2}(\sigma^2+(\sigma^0)^2) \Delta_x \mu^\alpha_t, \quad \mu^\alpha_0=\mu_0 \quad P-\text{a.s,}.\]
\end{remarque}
\begin{thm}
    \label{thm: convergence to measure valued mfg}
     Under Hypotheses \ref{hyp: potential mfg}, \ref{hyp: regularite coefficient mfg} and \ref{hyp: convexity in x}, letting $(\alpha^n,\bar{\alpha}^n)_{n\in \mathbb{N}}$ be the sequence defined through the fictitious play procedure and considering
    \[\bar{\mu}^n_t=\mathcal{L}(X^{\bar{\alpha}^n}_t|\mathcal{F}^0_t),\]
    and
    \[\Lambda^n(dt,dm)=\delta_{\mathcal{L}(X^{\bar{\alpha}^n}_t,\bar{\alpha}^n_t|\mathcal{F}^0_t)}(dm)dt,\]
    the sequence $\mathbb{Q}^n=\mathcal{L}\left((\bar{\mu}^n_t)_{t\in [0,T]},\Lambda^n,(W^0_t)_{t\in [0,T]}\right)$ is precompact in $\mathcal{P}_q(\bar{\Omega})$ for the metric $\mathcal{W}_q$ for any $1\leq q<2$. Moreover, if there exists $\eta>0$ such that $\mu_0=\mathcal{L}(X_0)\in \mathcal{P}_{2+\eta}(\reels^d)$, then any limit point $\mathbb{Q}^*$ of $(\mathbb{Q}^n)_{n\in \mathbb{N}}$ is a measure valued MFG equilibrium.
\end{thm}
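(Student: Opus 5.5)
Precompactness comes first and follows from the uniform bounds already established. By Lemma \ref{lemma: boundedness in H^T, fictitious play}, $\sup_n \|\bar{\alpha}^n\|_T<+\infty$. Hence $\sup_n \esp{\sup_{t\le T}|X^{\bar{\alpha}^n}_t|^2}<+\infty$, using $\mathcal{L}(X_0)\in\mathcal{P}_2$ and the BDG inequality. Under $\mathbb{Q}^n$ this gives uniform second moments of the following quantities:
\[\sup_t \int|x|^2\bar{\mu}^n_t(dx)\quad\text{and}\quad \int\int |(x,a)|^2 m(dx,da)\Lambda^n(dt,dm).\]
For the $\mu$-component I will prove tightness in $\mathcal{C}([0,T],\mathcal{P}_q(\reels^d))$ through an equicontinuity estimate. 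For $s<t$,
\[\esp{\mathcal{W}_q(\bar{\mu}^n_t,\bar{\mu}^n_s)^q}\le \esp{|X^{\bar{\alpha}^n}_t-X^{\bar{\alpha}^n}_s|^q}\le C\left(|t-s|^{q/2}\|\bar{\alpha}^n\|_T^q+|t-s|^{q/2}\right).\]
Combined with the moment bound and Aldous/Kolmogorov-type criteria, this gives tightness. The $\Lambda$-component lives in $\mathbb{M}$, whose $t$-marginal is fixed. Tightness there follows from the uniform second moment, since sets of measures with bounded second moment are relatively compact in $\mathcal{P}_q$ for $q<2$. The $W^0$-component has a fixed law. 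Uniform integrability of $q$-th moments for $q<2$ follows from the second moment bounds, so Prokhorov upgrades tightness to $\mathcal{W}_q$-precompactness.

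Next, any limit $\mathbb{Q}^*$ is a measure valued rule. The initial condition and the Brownian property of $B^0$ pass to the limit; for the latter one checks the martingale property together with independence from past increments. The identity $N_t(f)=0$ is a continuous functional of $(\mu,\Lambda,B^0)$ with linear growth in $(x,a)$, hence stable under $\mathcal{W}_q$ convergence with $q\ge1$. The constraint $\Lambda_t(\mathbb{Z}_{\mu_t})=1$ is closed, and I will show it passes to the limit by testing against $\int\int\phi(t)\,d_{\mathcal{W}}(\pi_d m,\mu_t)\Lambda(dt,dm)$, which is continuous.

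The equilibrium property is the main step. Fix a bounded Lipschitz feedback $\alpha$. By Corollary \ref{corol: difference fictitious play tends to 0}, $\|\alpha^n-\bar{\alpha}^n\|_T\to0$. Since $\alpha^n$ is the optimal response to $\bar{m}^n$, for the feedback control $\beta^n_t=\alpha(t,X^{\beta^n}_t,\Lambda^n)$ we have
\[J(\bar{\alpha}^n,\bar{m}^n)\le J(\alpha^n,\bar{m}^n)+o(1)\le J(\beta^n,\bar{m}^n)+o(1).\]
The first inequality uses local Lipschitz bounds on $J$ in the control, which follow from the linear growth in \eqref{eq: linear growth of coefs} and the uniform $L^2$ bounds. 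By \eqref{eq: from cost to lambda}, the left side equals $\mathbb{E}^{\mathbb{Q}^n}$ of the left functional in Definition \ref{def: measure valued mfg}. For the right side, the conditional law of $X^{\beta^n}_t$ given $\mathcal{F}^0$ solves the stochastic Fokker--Planck equation driven by $\Lambda^n$ and $W^0$. Since $\alpha$ is Lipschitz, this solution is a continuous functional of $(\Lambda,B^0)$, so the right side is $\mathbb{E}^{\mathbb{Q}^n}$ of the right functional.

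Passing to the limit along a subsequence is where I expect the main obstacle. The integrands $G$ and $L$ are continuous with quadratic growth (their gradients grow linearly), so $\mathcal{W}_q$ convergence with $q<2$ is not enough to pass to the limit directly. The extra assumption $\mu_0\in\mathcal{P}_{2+\eta}$ is meant to handle this. For the right side, the feedback $\beta^n$ is bounded, so $X^{\beta^n}$ has uniform $2+\eta$ moments; this gives uniform integrability and hence a genuine limit. For the left side only a $\liminf$ inequality is needed. Since $G,L\ge -C$ by \eqref{eq: boundedness from below of coefficients}, lower semicontinuity and a Fatou/Skorokhod argument yield
\[\mathbb{E}^{\mathbb{Q}^*}[\text{left}]\le\liminf_n \mathbb{E}^{\mathbb{Q}^n}[\text{left}].\]
Care is required because the measure argument $m$ enters $L$ only continuously. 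I plan to handle this by truncating the functionals and using the uniform-in-$m$ lower bound.
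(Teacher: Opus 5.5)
Your three steps are the paper's: precompactness from the uniform $\mathcal{H}^T$ bounds (the paper simply cites Propositions A.2 and B.1 of \cite{mfg-with-common}), stability of the measure-valued-rule conditions under $\mathcal{W}_q$ limits, and then
\[J(\bar{\alpha}^n,\bar m^n)\le J(\beta^n,\bar m^n)+C(1+\|\alpha^n\|_T+\|\bar\alpha^n\|_T)\|\alpha^n-\bar\alpha^n\|_T,\]
with Fatou on the left and uniform integrability on the right. The step that does not close as you justify it is the uniform integrability on the right. The functional
\[\Phi_a=\int_{\reels^d} G(x,\mu_T)\mu^a_T(dx)+\int_0^T\int\int_{\reels^d} L(t,x,a(t,x,\Lambda),m)\mu^a_t(dx)\Lambda(dt,dm)\]
depends not only on $\mu^a$ but also on the measure arguments $\mu_T$ and $m$. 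Under $\mathbb{Q}^n$ these are the conditional laws generated by $\bar\alpha^n$. Since $\bar\alpha^n$ is only bounded in $L^2$, all you have is that $\mathbb{E}^{\mathbb{Q}^n}[\mathcal{W}_2(\mu_T,\delta_0)^2+\int\int|(y,b)|^2m(dy,db)\Lambda(dt,dm)]$ is bounded in $n$. That is an $L^1$ bound, not uniform integrability. If you treat $G,L$ as having quadratic growth in all arguments, as you do, a term $\mathcal{W}_2(\mu_T,\delta_0)^2$ remains in the bound. The $2+\eta$ moments of $X^{\beta^n}$ do not touch that term.

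The missing ingredient is structural. Combining \eqref{eq: boundedness of coef in measure at 0} with \eqref{eq: boundedness from below of coefficients} makes $G(0,\mu)$ and $L(t,0,0,m)$ bounded uniformly in the measure. Integrating the gradients along $s\mapsto s(x,b)$ and using \eqref{eq: linear growth of coefs} then gives
\begin{gather*}
|G(x,\mu)|\le C\bigl(1+|x|^2+|x|\,\mathcal{W}_2(\mu,\delta_0)\bigr),\\
|L(t,x,b,m)|\le C_a\bigl(1+|x|^2+(1+|x|)\,\mathcal{W}_2(m,\delta_0)\bigr)\quad\text{for } |b|\le\|a\|_\infty .
\end{gather*}
So the measure enters only linearly and multiplied by $1+|x|$. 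Now apply Jensen, then Young with exponents $\frac{2}{1-\varepsilon}$ and $\frac{2}{1+\varepsilon}$, taking $\varepsilon$ small relative to $\eta$. This places all the extra integrability on $x$:
\begin{multline*}
|\Phi_a|^{1+\varepsilon}\le C\Bigl(1+\int|x|^{2+\eta}\mu^a_T(dx)+\int_0^T\int|x|^{2+\eta}\mu^a_t(dx)\,dt\Bigr)\\
+C\Bigl(\mathcal{W}_2(\mu_T,\delta_0)^2+\int\int|(y,b)|^2m(dy,db)\Lambda(dt,dm)\Bigr).
\end{multline*}
Its $\mathbb{Q}^n$-expectation is bounded by \eqref{eq: boundedness in expectation of mu a} together with the $L^2$ bound on $(\bar X^n,\bar\alpha^n)$. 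This is the paper's computation. It is followed by the Fatou-type lemma of \cite{fatou} for asymptotically uniformly integrable sequences, which yields the needed $\limsup$ inequality; a genuine limit is not required.

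A smaller point on compactness: your increment bound $C|t-s|^{q/2}$ has exponent $q/2<1$, so Kolmogorov's criterion does not apply. Aldous' criterion works. More simply, use the pathwise estimate
\[\mathcal{W}_2(\bar\mu^n_t,\bar\mu^n_s)\le|t-s|^{1/2}\,\mathbb{E}\Bigl[\int_0^T|\bar\alpha^n_r|^2dr\,\Big|\,\mathcal{F}^0_T\Bigr]^{1/2}+\sigma\sqrt{d\,|t-s|}+\sigma^0|W^0_t-W^0_s|\]
together with Arzelà--Ascoli and the bound on $\mathbb{E}[\sup_t\int|x|^2\bar\mu^n_t(dx)]$.
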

\begin{proof}
    \quad

    \flushleft\textit{First step: any limit point of $(\mathbb{Q}^n)_{n\in \mathbb{N}}$ is a measure valued rule}\\
    It is quite straightforward that for any given $n\in \mathbb{N}$ $\mathbb{Q}^n$ is a measure valued rule. The compacteness of the sequence follows from Proposition A.2 and B.1 of \cite{mfg-with-common}. Let us assume that along a subsequence, $(\mathbb{Q}^n)_{n\in \mathbb{N}}$ converges to some limit $\mathbb{Q}$. By an abuse of notation this subsequence is also denoted $(\mathbb{Q}^n)_{n\in \mathbb{N}}$. Let us recall that for fixed $n\in \mathbb{N}$, $(\bar{\mu}^n_t)_{t\in [0,T]}$ is given as the conditional law with respect to the common noise of the random variable $(\bar{X}^n_t)_{t\in [0,T]}$ with 
    \[\bar{X}^n_t=X_0-\int_0^t \bar{\alpha}^n_s ds+\sigma W_t+\sigma^0 W^0_t.\] 
    It follows that for any function $f\in C^2_b(\reels^d,\reels)$, 
    \begin{align*}N^n_t(f)&=\espcond{f(\bar{X}^n_t-\sigma^0 W^0_t)-f(X_0)-\int_0^t \left(\nabla_x f(\bar{X}_s-\sigma^0 W^0_s)\cdot \bar{\alpha}^n_sds+\frac{1}{2}\sigma^2 \Delta_x f(\bar{X}_s-\sigma^0 W^0_s)\right)ds}{\mathcal{F}_t}\\
    &=0 \quad \mathbb{P}-a.s.
    \end{align*}
    In particular 
    \[\forall n\in \mathbb{N}, \quad \mathbb{E}^\mathbb{P}\left[|N^n_t(f)|\right]=0.\]
    On the other hand 
    \[\mathbb{E}^\mathbb{P}\left[|N^n_t(f)|\right]=\mathbb{E}^{\mathbb{Q}^n}\left[|N_t(f)|\right].\]
    By the convergence of $(\mathbb{Q}^n)_{n\in \mathbb{N}}$ to $\mathbb{Q}$ for $\mathcal{W}_1$ we deduce immediately that 
    \[\mathbb{E}^{\mathbb{Q}}\left[|N_t(f)|\right]=0,\]
    and hence that for any $f\in C^2_b(\reels^d,\reels)$,
    \[N_t(f)=0,\quad \mathbb{Q}-a.s.\]
    Next we prove that 
    \[\Lambda_t(\mathbb{Z}_{\mu_t})=1,\quad \lambda \otimes\mathbb{Q} \text{ on }[0,T]\times \bar{\Omega},\]
    where $\lambda$ indicates Lesbegues measure. To that end let us fix a function $h\in C_b(\reels^d,\reels)$, for fixed $n\in \mathbb{N}$,
    \begin{align*}0&=\esp{\int_0^T\int_{m\in \mathcal{P}(\reels^{2d})}\left|\int_{\reels^{2d}}h(x)(\bar{\mu}^n_t(dx)-m(dx,da))\right|\Lambda^n(dt,dm)}\\
    &=\mathbb{E}^{\mathbb{Q}^n}\left[\int_0^T\int_{m\in \mathcal{P}(\reels^{2d})}\left|\int_{\reels^{2d}}h(x)(\mu_t(dx)-m(dx,da))\right|\Lambda(dt,dm)\right].\end{align*}
    Moreover by \cite{djete-measure-valued}, proposition A.4, the mapping 
    \[(\Lambda,(\mu_t)_{t\in [0,T]})\mapsto\int_0^T\int_{m\in \mathcal{P}(\reels^{2d})}\left|\int_{\reels^{2d}}h(x)(\mu_t(dx)-m(dx,da))\right|\Lambda(dt,dm), \]
    is continuous, and it follows by passing to the limit that 
    \[\mathbb{E}^{\mathbb{Q}}\left[\int_0^T\int_{m\in \mathcal{P}(\reels^{2d})}\left|\int_{\reels^{2d}}h(x)(\mu_t(dx)-m(dx,da))\right|\Lambda(dt,dm)\right]=0.\]
    As this is true for any continuous bounded function $h$, we deduce that 
    \[\Lambda_t(\mathbb{Z}_{\mu_t})=1, \lambda \otimes \mathbb{Q}-a.s.\]
    The other properties of measure valued rules are verified uniformly in $n\in \mathbb{N}$ and there is no particular difficulty in proving that they are conserved by the limiting measure $\mathbb{Q}$.
\flushleft\textit{Second step: any limit point of $(\mathbb{Q}^n)_{n\in \mathbb{N}}$ is a measure valued MFG equilibrium}\\
Let us fix $n\in \mathbb{N}$, by definition of the couple $(X^n_t,\alpha^n_t)_{t\in [0,T]}$ for any admissible control $(\alpha_t)_{t\in [0,T]}$ the following holds
\begin{gather*}
\esp{G(X^n_T,\mathcal{L}(\bar{X}^n_T|\mathcal{F}^0_T))+\int_0^T L(t,X^n_t,\alpha^n_t,\mathcal{L}(\bar{X}^n_t,\bar{\alpha}^n_t|\mathcal{F}^0_t))dt}\\
\leq \esp{G(X^\alpha_T,\mathcal{L}(\bar{X}^n_T|\mathcal{F}^0_T))+\int_0^T L(t,X^\alpha_t,\alpha_t,\mathcal{L}(\bar{X}^n_t,\bar{\alpha}^n_t|\mathcal{F}^0_t))dt}.
\end{gather*}
Using the growth assumptions \eqref{eq: linear growth of coefs} of Hypothesis \ref{hyp: regularite coefficient mfg} and Cauchy-Schwartz inequality, we deduce that there exists a constant $C>0$ such that for any admissible control $\alpha$
\begin{gather*}
\esp{G(\bar{X}^n_T,\mathcal{L}(\bar{X}^n_T|\mathcal{F}^0_T))+\int_0^T L(t,\bar{X}^n_t,\bar{\alpha}^n_t,\mathcal{L}(\bar{X}^n_t,\bar{\alpha}^n_t|\mathcal{F}^0_t))dt}\\
=\mathbb{E}^{\mathbb{Q}^n}\left[\int_{\reels^d} G(x,\mu_T)\mu_T(dx)+\int_0^T\int_{m\in \mathbb{M}}\int_{(x,a)\in \reels^{2d}}L(t,x,a,m)m(dx,d\alpha)\Lambda(dt,dm)\right]\\
\leq \esp{G(X^\alpha_T,\mathcal{L}(\bar{X}^n_T|\mathcal{F}^0_T))+\int_0^T L(t,X^\alpha_t,\alpha_t,\mathcal{L}(\bar{X}^n_t,\bar{\alpha}^n_t|\mathcal{F}^0_t))dt}\\
+C\left(1+\|\alpha^n\|_T+\|\bar{\alpha}^n\|_T\right)\|\alpha^n-\bar{\alpha}^n\|_T.
\end{gather*}
Letting $a:[0,T]\times \reels^d\times \mathbb{M}\to \reels^d$ be a Lipschitz, bounded and progressively measurable function we consider the unique solution $(X^a_t)_{t\in [0,T]}$ of 
\[X^a_t=X_0-\int_0^t a(t,X^a_t,\Lambda^n)dt+\sigma W_t+\sigma^0W^0_t,\]
and the associated control 
\[\forall t\in [0,T],\quad \alpha_t=a(t,X^a_t,\Lambda^n).\]
Since $a$ is Lipschitz in the space variable, $\mu^a_t=\mathcal{L}(X^a_t|\mathcal{F}^0_t)$, is the unique solution of the stochastic Fokker-Planck equation
\[d\mu^{n,a}_t=-\text{div}\left(\left(a(t,x,\Lambda^n)+\sigma^0dW^0_t\right)\mu^{n,a}_t\right)+\frac{1}{2}(\sigma^2+(\sigma^0)^2) \Delta_x \mu^{n,a}_t, \quad \mu^{n,a}_0=\mu_0 \quad \mathbb{P}-\text{a.s,}.\]
In particular, since $a$ is a bounded function and by assumption 
\[\esp{|X_0|^{2+\eta}}<+\infty,\]
there exists a constant independent of $n\in \mathbb{N}$, $C^a$, such that 
\begin{equation}
    \label{eq: boundedness in expectation of mu a}
    \forall n\in \mathbb{N},\forall t\in [0,T]\quad \esp{\int_{\reels^d}|x|^{2+\eta}\mu^{n,a}_t(dx)}<C^a. \end{equation}
Moreover, for any $n\in \mathbb{N}$
\begin{gather*}
\esp{G(X^\alpha_T,\mathcal{L}(\bar{X}^n_T|\mathcal{F}^0_T))+\int_0^T L(t,X^\alpha_t,\alpha_t,\mathcal{L}(\bar{X}^n_t,\bar{\alpha}^n_t|\mathcal{F}^0_t))dt}\\
=\mathbb{E}\left[\int_{\reels^d} G(x,\mu^n_T)\mu^{n,a}_t(dx)+\int_0^T\int_{m\in \mathbb{M}}\int_{x\in\reels^d}L(t,x,a(t,x,\Lambda^n),m)\mu^{n,a}_t(dx)\Lambda^n(dt,dm)\right]\\
=\mathbb{E}^{\mathbb{Q}^n}\left[\int_{\reels^d} G(x,\mu_T)\mu^a_T(dx)+\int_0^T\int_{m\in \mathbb{M}}\int_{x\in\reels^d}L(t,x,a(t,x,\Lambda),m)\mu^a_t(dx)\Lambda(dt,dm)\right],
\end{gather*}
where $(\mu^a_t)_{t\in [0,T]}$ solves 
\[d\mu^{a}_t=-\text{div}\left(\left(a(t,x,\Lambda)+\sigma^0dB^0_t\right)\mu^{a}_t\right)+\frac{1}{2}(\sigma^2+(\sigma^0)^2) \Delta_x \mu^{a}_t, \quad \mu^{a}_0=\mu_0,\quad \mathbb{Q}^n-a.s.\]
It follows that there exists a constant $C'$ independent of the choice of $a$ such that 
\begin{gather*}
\mathbb{E}^{\mathbb{Q}^n}\left[\int_{\reels^d} G(x,\mu_T)\mu_T(dx)+\int_0^T\int_{m\in \mathbb{M}}\int_{(x,a)\in \reels^{2d}}L(t,x,a,m)m(dx,d\alpha)\Lambda(dt,dm)\right]\\
\leq \mathbb{E}^{\mathbb{Q}^n}\left[\int_{\reels^d} G(x,\mu_T)\mu^a_T(dx)+\int_0^T\int_{m\in \mathbb{M}}\int_{x\in\reels^d}L(t,x,a(t,x,\Lambda),m)\mu^a_t(dx)\Lambda(dt,dm)\right]\\
+C'\left(1+\|\alpha^n\|_T+\|a\|_\infty\right)\|\alpha^n-\bar{\alpha}^n\|_T
\end{gather*}
Let us now assume that (up to the extraction of a subsequence), $\mathbb{Q}^n$ converges to some limit $\mathbb{Q}$. By Hypothesis \ref{hyp: convexity in x}, $G,L$ are bounded from below and it follows from Fatou's Lemma that 
\begin{gather*}
    \mathbb{E}^{\mathbb{Q}}\left[\int_{\reels^d} G(x,\mu_T)\mu_T(dx)+\int_0^T\int_{m\in \mathbb{M}}\int_{(x,\alpha)\in \reels^{2d}}L(t,x,a,m)m(dx,d\alpha)\Lambda(dt,dm)\right]\\
    \leq \liminf_n \mathbb{E}^{\mathbb{Q}^n}\left[\int_{\reels^d} G(x,\mu_T)\mu_T(dx)+\int_0^T\int_{m\in \mathbb{M}}\int_{(x,\alpha)\in \reels^{2d}}L(t,x,a,m)m(dx,d\alpha)\Lambda(dt,dm).\right]
\end{gather*}
On the other hand, letting $\varepsilon\in [0,\frac{\eta}{2+\eta}\wedge \frac{1}{2})$, we observe that there exists a constant $C$ such that for any $n\in \mathbb{N}$
\begin{gather*}
    \mathbb{E}^{\mathbb{Q}^n}\left[\left|\int_{\reels^d} G(x,\mu_T)\mu^a_T(dx)+\int_0^T\int_{m\in \mathbb{M}}\int_{x\in\reels^d}L(t,x,a(t,x,\Lambda),m)\mu^a_t(dx)\Lambda(dt,dm)\right|^{1+\varepsilon}\right]\\
    \leq C+C\mathbb{E}^{\mathbb{Q}^n}\left[\mathcal{W}_2(\mu_T,\delta_{0_{\reels^d}})^{2}+\int_{\reels^d} |x|^{2+\eta}\mu^a_T(dx)\right]\\
    +C\mathbb{E}^{\mathbb{Q}^n}\left[\int_0^T\int_{m\in \mathbb{M}}\int_{x\in\reels^d}\left(|x|^{2+\eta}+|a(t,x,\Lambda)|^{2+\eta}+\int_{(y,\beta)\in \reels^{2d}}|(y,\beta)|^2m(dy,d\beta)\right)\mu^a_t(dx)\Lambda(dt,dm)\right],
\end{gather*}
where we have used our growth assumptions on coefficients Hypothesis \ref{hyp: regularite coefficient mfg} and Hypothesis \ref{hyp: convexity in x}, as well as Young's inequality. We now recall that 
\begin{gather*}\mathbb{E}^{\mathbb{Q}^n}\left[\int_0^T\int_{m\in \mathbb{M}}\int_{(y,\beta)\in \reels^{2d}}|(y,\beta)|^2m(dy,d\beta)\Lambda(dt,dm)\right]\\
=\esp{\int_0^T\int_{m\in \mathbb{M}}\int_{(y,\beta)\in \reels^{2d}}|(y,\beta)|^2m(dy,d\beta)\Lambda^n(dt,dm)}\\
=\esp{\int_0^T|(\bar{X}^n_t,\bar{\alpha}^n_t)|^2dt},
\end{gather*}
which is bounded uniformly in $n\in \mathbb{N}$ by Lemma \ref{lemma: boundedness in H^T, fictitious play}. We conclude by using \eqref{eq: boundedness in expectation of mu a} that 
\[\sup_n\mathbb{E}^{\mathbb{Q}^n}\left[\left|\int_{\reels^d} G(x,\mu_T)\mu^a_T(dx)+\int_0^T\int_{m\in \mathbb{M}}\int_{x\in\reels^d}L(t,x,a(t,x,\Lambda),m)\mu^a_t(dx)\Lambda(dt,dm)\right|^{1+\varepsilon}\right]<+\infty.\]
In particular it follows that the random variable 
\[\left|\int_{\reels^d} G(x,\mu_T)\mu^a_T(dx)+\int_0^T\int_{m\in \mathbb{M}}\int_{x\in\reels^d}L(t,x,a(t,x,\Lambda),m)\mu^a_t(dx)\Lambda(dt,dm)\right|,\]
is asymptotically uniform integrable with respect to the sequence of measure $(\mathbb{Q}^n)_{n\in \mathbb{N}}$, thus by a variation on Fatou's Lemma presented in \cite{fatou} Theorem 2.4
\begin{gather*}
\limsup_n \mathbb{E}^{\mathbb{Q}^n}\left[\int_{\reels^d} G(x,\mu_T)\mu^a_T(dx)+\int_0^T\int_{m\in \mathbb{M}}\int_{x\in\reels^d}L(t,x,a(t,x,\Lambda),m)\mu^a_t(dx)\Lambda(dt,dm)\right]\\
\leq \mathbb{E}^{\mathbb{Q}}\left[\int_{\reels^d} G(x,\mu_T)\mu^a_T(dx)+\int_0^T\int_{m\in \mathbb{M}}\int_{x\in\reels^d}L(t,x,a(t,x,\Lambda),m)\mu^a_t(dx)\Lambda(dt,dm)\right].
\end{gather*}
Finally by Lemma \ref{lemma: boundedness in H^T, fictitious play} and \ref{corol: difference fictitious play tends to 0} 
\[\lim_{n\to \infty}\left(1+\|\alpha^n\|_T+\|a\|_\infty\right)\|\alpha^n-\bar{\alpha}^n\|_T=0,\]
and we deduce that for any admissible function $a$
\begin{gather*}
    \mathbb{E}^{\mathbb{Q}}\left[\int_{\reels^d} G(x,\mu_T)\mu_T(dx)+\int_0^T\int_{m\in \mathbb{M}}\int_{(x,\alpha)\in \reels^{2d}}L(t,x,a,m)m(dx,d\alpha)\Lambda(dt,dm)\right]\\
    \leq \mathbb{E}^{\mathbb{Q}}\left[\int_{\reels^d} G(x,\mu_T)\mu^a_T(dx)+\int_0^T\int_{m\in \mathbb{M}}\int_{x\in\reels^d}L(t,x,a(t,x,\Lambda),m)\mu^a_t(dx)\Lambda(dt,dm)\right],
\end{gather*}
which proves that $\mathbb{Q}$ is a measure valued MFG equilibrium.
\end{proof}
\subsection{Strong convergence in the convex setting}
we now make the following convexity assumption on coefficients
\begin{hyp}
    \label{hyp: coupling convexity}
    There exists a constant $c_l>0$, such that for any $(X,\alpha),(Y,\beta)\in L^2(\Omega, \reels^d)$, $(t,\lambda)\in [0,T]\times [0,1]$, 
    \[g(\mathcal{L}(\lambda X+(1-\lambda)Y))\leq \lambda g(\mathcal{L}(X))+(1-\lambda)g(\mathcal{L}(Y))\]
    and 
    \[l(\mathcal{L}(t,\lambda(X,\alpha)+(1-\lambda)(Y,\beta)))\leq \lambda l (t,\mathcal{L}(X,\alpha))+(1-\lambda)l (t,\mathcal{L}(Y,\beta))-\frac{1}{2}c_l \lambda(1-\lambda)\|\alpha-\beta\|^2\]
\end{hyp}
We do not refer to this convexity as displacement convexity, which is usually reserved for convexity along optimal couplings, instead some other works suggest to use the terminology of coupling convexity \cite{bertucci-livre}. Let us insist however, that this notion of convexity implies that the coefficients $G=\frac{\delta g}{\delta \mu}$, $L=\frac{\delta l}{\delta m}$ are displacement monotone. In particular, it is a direct consequence of \cite{meszaros} Lemma 2.3, that under this assumption, Hypothesis \ref{hyp: convexity in x} is verified. 
\begin{lemma}
    Under Hypothesis \ref{hyp: regularite coefficient mfg} and \ref{hyp: coupling convexity} there exists a unique optimal control $\alpha^*\in \left(\mathcal{H}^T\right)^d$ to the mean field control problem 
    \begin{equation}\label{eq: mfc}\inf_\alpha \mathcal{J}(\alpha).\end{equation}
    Moreover, it is the unique control satisfying 
    \begin{gather}
   \nonumber\forall (\alpha'_t)_{t\in [0,T]}\in \left(\mathcal{H}^T\right)^d\\
    \label{eq: first order optimality mfc} \esp{\nabla_x G(X^{\alpha^*}_T,\mu^*_T)\cdot (X^{\alpha'}_T-X^{\alpha^*}_T)+\int_0^T \left(\begin{array}{c} \nabla_x L(t,X^{\alpha^*}_t,\alpha^*_t,m^*_t)\\ \nabla_\alpha L(t,X^{\alpha^*}_t,\alpha^*_t,m^*_t) \end{array}\right)\cdot \left(\begin{array}{c} X^{\alpha'}_t-X^{\alpha^*}_t\\ \alpha'_t-\alpha^*_t\end{array}\right) dt}\geq 0,\\
    \nonumber m^*_t=\mathcal{L}(X^{\alpha^*}_t,\alpha^*_t|\mathcal{F}^0_t),\\
    \nonumber \mu^*_T=\mathcal{L}(X^{\alpha^*}_T|\mathcal{F}^0_T).
\end{gather} 
\end{lemma}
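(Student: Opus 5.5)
The plan is to treat $\mathcal{J}$ as a functional on the Hilbert space $\left(\mathcal{H}^T\right)^d$. We show it is strongly convex, lower semicontinuous and coercive, which gives existence and uniqueness. We then identify its Gâteaux derivative with the left-hand side of \eqref{eq: first order optimality mfc}, which gives the characterisation.

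\emph{Step 1: strong convexity.} The map $\alpha\mapsto X^\alpha_t$ is affine. Hence for $\alpha,\beta\in(\mathcal{H}^T)^d$ and $\lambda\in[0,1]$ we have $(X^{\lambda\alpha+(1-\lambda)\beta}_t,\lambda\alpha_t+(1-\lambda)\beta_t)=\lambda(X^\alpha_t,\alpha_t)+(1-\lambda)(X^\beta_t,\beta_t)$. Apply Hypothesis \ref{hyp: coupling convexity} conditionally on $\mathcal{F}^0$, i.e.\ $\omega^0$ by $\omega^0$ on the product space. Then integrate in $t$ and take expectations. This yields
\[\mathcal{J}(\lambda\alpha+(1-\lambda)\beta)\leq \lambda\mathcal{J}(\alpha)+(1-\lambda)\mathcal{J}(\beta)-\tfrac{c_l}{2}\lambda(1-\lambda)\|\alpha-\beta\|_T^2.\]
Applying the hypothesis under the conditional law needs a short justification: the hypothesis is stated for laws on a generic probability space, and on $\Omega^1$ with $\omega^0$ frozen we are in exactly that setting.

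\emph{Step 2: existence and uniqueness.} By Hypothesis \ref{hyp: potential mfg}, $\mathcal{J}\geq -C_b(1+T)$. Growth bounds from Hypothesis \ref{hyp: regularite coefficient mfg}, obtained by integrating $\frac{\delta l}{\delta m}$ along $\lambda\mapsto$ interpolations, show that $\mathcal{J}$ is finite and continuous on $(\mathcal{H}^T)^d$. A convex continuous functional is weakly lower semicontinuous.

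Coercivity comes from strong convexity, exactly as in Lemma \ref{lemma: characterisation of control with fbsde}:
\[\tfrac{c_l}{4}\|\alpha\|_T^2\leq \mathcal{J}(0)-\mathcal{J}(\alpha/2)+\tfrac12(\mathcal{J}(\alpha)-\mathcal{J}(0)).\]
Hence sublevel sets are bounded. A minimizing sequence then has a weakly convergent subsequence whose limit is a minimizer $\alpha^*$. Strong convexity gives uniqueness.

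\emph{Step 3: first order condition.} For $\alpha'\in(\mathcal{H}^T)^d$ set $\beta_\lambda=\alpha^*+\lambda(\alpha'-\alpha^*)$. I compute $\lim_{\lambda\to0}\frac1\lambda(\mathcal{J}(\beta_\lambda)-\mathcal{J}(\alpha^*))$ with the linear-derivative chain rule, as in the proof of Lemma \ref{lemma: decreasing potential fictitious play}. The $g$-term is written as
\[\esp{\int_0^1 \nabla_xG(X^\kappa_T,\mathcal{L}(X^\kappa_T|\mathcal{F}^0_T))\cdot(X^{\beta_\lambda}_T-X^{\alpha^*}_T)d\kappa},\]
and the $l$-term is handled similarly. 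The Lipschitz continuity in the measure, the continuity in $(x,\alpha)$ and the linear growth let us pass to the limit by dominated convergence. The limit is precisely the left-hand side of \eqref{eq: first order optimality mfc}, so minimality gives \eqref{eq: first order optimality mfc}.

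\emph{Step 4: converse.} Suppose $\tilde\alpha$ satisfies \eqref{eq: first order optimality mfc} with its own $\tilde m,\tilde\mu$. The Gâteaux derivative of $\mathcal{J}$ at $\tilde\alpha$ in direction $\alpha'-\tilde\alpha$ is nonnegative. Convexity of $\lambda\mapsto\mathcal{J}(\tilde\alpha+\lambda(\alpha'-\tilde\alpha))$ then gives $\mathcal{J}(\alpha')\geq\mathcal{J}(\tilde\alpha)$. So $\tilde\alpha$ is a minimizer, and hence $\tilde\alpha=\alpha^*$ by uniqueness.

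The main obstacle is Step 3: the rigorous differentiation of $\lambda\mapsto l(t,\mathcal{L}(X^{\beta_\lambda}_t,\beta_{\lambda,t}|\mathcal{F}^0_t))$. It requires the identity $\frac{d}{d\lambda}l(\mathcal{L}(Y_\lambda))=\esp{\nabla_{(x,a)}\frac{\delta l}{\delta m}(\mathcal{L}(Y_\lambda))(Y_\lambda)\cdot\partial_\lambda Y_\lambda}$ for linear interpolations, together with uniform integrability under the conditional law. I would establish it first for the conditional law with $\omega^0$ fixed, using the standard relation between the linear and the $L$-derivative. I would then integrate in $\omega^0$ using the $L^2$ bounds.
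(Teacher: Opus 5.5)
Your proposal is correct and follows the same route as the paper, whose proof is only a two-line appeal to the strong convexity of $\mathcal{J}$ under Hypothesis \ref{hyp: coupling convexity} and to the first-order computation of Lemma \ref{lemma: first order optimality mfg}; you supply the details the paper leaves implicit, namely the conditional application of the convexity hypothesis, coercivity with weak lower semicontinuity, and the converse via convexity of $\lambda\mapsto\mathcal{J}(\tilde\alpha+\lambda(\alpha'-\tilde\alpha))$. One harmless slip: taking $\lambda=\frac12$ in your Step 1 inequality gives $\frac{c_l}{8}\|\alpha\|_T^2$, not $\frac{c_l}{4}\|\alpha\|_T^2$, on the left-hand side of the coercivity bound.
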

\begin{proof}
The proof follow from the strong convexity of $\mathcal{J}$ on $\left(\mathcal{H}^T\right)^d$  under Hypothesis \ref{hyp: coupling convexity}, as well as arguments already presented in the proof of Lemma \ref{lemma: first order optimality mfg} using the first order optimality conditions at $\alpha^*$.
\end{proof}
Naturally, a direct consequence is that whenever the potential $\mathcal{J}$ is convex, there exists a unique MFG equilibrium and the sequence $(\alpha^n)_{n\in \mathbb{N}}$ converges strongly
\begin{thm}
    \label{thm: cv monotone setting}
Under Hypotheses \ref{hyp: regularite coefficient mfg} and \ref{hyp: coupling convexity}, if the sequence $(\eta_n)_{n\in \mathbb{N}}\subset [0,1]$ satisfies 
\[\sum_{n=1}^{+\infty} \eta_n=+\infty,\quad \sum_{n=1}^{+\infty}\eta_n^2<+\infty,\]

then, the sequence of controls $(\alpha^n)_{n\in\mathbb{N}}$ generated by the fictitious play converges strongly in $\left(\mathcal{H}^T\right)^d$ to the optimal control $\alpha^*$ solution of the mean field control problem \eqref{eq: mfc}.
\end{thm}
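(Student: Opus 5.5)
We will show that $\bar{\alpha}^n\to\alpha^*$ strongly in $\left(\mathcal{H}^T\right)^d$. Since Corollary \ref{corol: difference fictitious play tends to 0} gives $\|\alpha^n-\bar{\alpha}^n\|_T\to 0$, the same limit then holds for $\alpha^n$.

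First we check that the earlier results apply under Hypothesis \ref{hyp: coupling convexity}. As remarked before the statement, this assumption implies Hypothesis \ref{hyp: convexity in x} by \cite{meszaros} Lemma 2.3. Hypothesis \ref{hyp: potential mfg} is implicit in the setting, since $\mathcal{J}$ is the potential and must be bounded below. Hence Lemmas \ref{lemma: decreasing potential fictitious play}, \ref{lemma: boundedness in H^T, fictitious play}, \ref{lemma: bound increment on control, fictitious play} and Corollary \ref{corol: difference fictitious play tends to 0} all hold. In particular $(\alpha^n,\bar{\alpha}^n)$ is bounded and $a_n:=\|\alpha^n-\bar{\alpha}^n\|_T\to 0$.

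The key estimate compares $\bar{\alpha}^n$ with $\alpha^*$ using the characterization of $\alpha^n$ as a best response. Set $\bar{m}^n_t=\mathcal{L}(X^{\bar{\alpha}^n}_t,\bar{\alpha}^n_t|\mathcal{F}^0_t)$ and let $D\mathcal{J}(\beta)$ denote the Gâteaux derivative of $\mathcal{J}$ at $\beta$. The first order condition of Lemma \ref{lemma: first order optimality mfg} for $\alpha^n$, tested against $\alpha'=\alpha^*$, gives
\[\esp{\nabla_x G(X^{\alpha^n}_T,\bar{\mu}^n_T)\cdot(X^{\alpha^*}_T-X^{\alpha^n}_T)+\int_0^T \nabla_{(x,\alpha)} L(t,X^{\alpha^n}_t,\alpha^n_t,\bar{m}^n_t)\cdot(X^{\alpha^*}_t-X^{\alpha^n}_t,\alpha^*_t-\alpha^n_t)dt}\geq 0.\]
We replace $(X^{\alpha^n},\alpha^n)$ inside the gradients by $(X^{\bar{\alpha}^n},\bar{\alpha}^n)$ and the test directions $\alpha^*-\alpha^n$ by $\alpha^*-\bar{\alpha}^n$. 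Only the finite dimensional arguments of the gradients change, not the measure. By linear growth and boundedness in $\mathcal{H}^T$, the change in the directions costs $O(a_n)$.

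The change in the gradient arguments is the delicate point, because the gradients are only continuous, not Lipschitz, in $(x,\alpha)$. We avoid any modulus of continuity by using monotonicity instead. Convexity of $G$, and of $L$ jointly in $(x,\alpha)$, makes the gradients monotone, so
\[\langle \nabla L(\bar{\cdot})-\nabla L(\cdot^n),(\bar{X}^n,\bar{\alpha}^n)-(X^{\alpha^n},\alpha^n)\rangle\geq 0.\]
This does not by itself control the term with the direction $\alpha^*-\bar{\alpha}^n$.

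A cleaner route is to work directly with the convexity of $\mathcal{J}$. We write
\[\langle D\mathcal{J}(\bar{\alpha}^n),\alpha^*-\bar{\alpha}^n\rangle=\langle D\mathcal{J}(\bar{\alpha}^n),\alpha^*-\alpha^n\rangle+\langle D\mathcal{J}(\bar{\alpha}^n),\alpha^n-\bar{\alpha}^n\rangle.\]
The second term is $O(a_n)$ by linear growth. For the first term, $D\mathcal{J}(\bar{\alpha}^n)$ is the best-response cost gradient evaluated at $\bar{\alpha}^n$, while $\alpha^n$ minimizes the convex functional $J(\cdot,\bar{m}^n)$. Convexity of $J(\cdot,\bar{m}^n)$ together with minimality of $\alpha^n$ gives
\[\langle D_\alpha J(\bar{\alpha}^n,\bar{m}^n),\alpha^*-\alpha^n\rangle\geq J(\alpha^*,\bar{m}^n)-J(\alpha^n,\bar{m}^n)-\langle D_\alpha J(\bar{\alpha}^n,\bar{m}^n),\alpha^n-\bar{\alpha}^n\rangle\]
up to terms we can bound. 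This argument still needs some care; we expect it to be the main obstacle.

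If the above proves too fragile, we will use a fallback that only requires convergence of values. By Lemma \ref{lemma: decreasing potential fictitious play}, $\mathcal{J}(\bar{\alpha}^n)$ is eventually nonincreasing and bounded below, so it has a limit. By strong convexity of $\mathcal{J}$ with constant $c_l$, together with $D\mathcal{J}(\alpha^*)=0$ in the variational sense of \eqref{eq: first order optimality mfc},
\[\frac{c_l}{2}\|\bar{\alpha}^n-\alpha^*\|^2_T\leq \mathcal{J}(\bar{\alpha}^n)-\mathcal{J}(\alpha^*).\]
It therefore suffices to show that $\mathcal{J}(\bar{\alpha}^n)\to\mathcal{J}(\alpha^*)$. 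Convexity of $\mathcal{J}$ gives
\[\mathcal{J}(\bar{\alpha}^n)-\mathcal{J}(\alpha^*)\leq \langle D\mathcal{J}(\bar{\alpha}^n),\bar{\alpha}^n-\alpha^*\rangle.\]
The key point is that the derivative of the potential equals the derivative of the individual cost with frozen measure:
\[D\mathcal{J}(\bar{\alpha}^n)=D_\alpha J(\cdot,\bar{m}^n)\big|_{\bar{\alpha}^n}.\]
Convexity of $J(\cdot,\bar{m}^n)$ then yields
\[\langle D_\alpha J(\bar{\alpha}^n,\bar{m}^n),\bar{\alpha}^n-\alpha^*\rangle\leq J(\bar{\alpha}^n,\bar{m}^n)-J(\alpha^*,\bar{m}^n)+\text{(Bregman)}.\]
Since $J(\alpha^n,\bar{m}^n)\leq J(\alpha^*,\bar{m}^n)$, it remains to bound $J(\bar{\alpha}^n,\bar{m}^n)-J(\alpha^n,\bar{m}^n)$. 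The first order condition at $\alpha^n$ controls this difference by $\langle D_\alpha J(\bar{\alpha}^n,\bar{m}^n),\bar{\alpha}^n-\alpha^n\rangle$, which is $\leq C a_n$ by linear growth and the uniform bounds. The Bregman remainder is handled the same way.

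Combining these, we obtain $\frac{c_l}{2}\|\bar{\alpha}^n-\alpha^*\|_T^2\leq C a_n\to 0$, which gives the strong convergence of $\bar{\alpha}^n$, and hence of $\alpha^n$, to $\alpha^*$.
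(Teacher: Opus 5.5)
Your fallback argument does not close. After convexity of $\mathcal{J}$ and the (correct) identity $D\mathcal{J}(\bar{\alpha}^n)=D_\alpha J(\bar{\alpha}^n,\bar{m}^n)$, everything rests on an \emph{upper} bound for $\langle D_\alpha J(\bar{\alpha}^n,\bar{m}^n),\bar{\alpha}^n-\alpha^*\rangle$. Convexity of $J(\cdot,\bar{m}^n)$ only gives a lower bound. Your inequality ``with a Bregman term'' is in fact the identity
\[\langle D_\alpha J(\bar{\alpha}^n,\bar{m}^n),\bar{\alpha}^n-\alpha^*\rangle=J(\bar{\alpha}^n,\bar{m}^n)-J(\alpha^*,\bar{m}^n)+B_n,\quad B_n=J(\alpha^*,\bar{m}^n)-J(\bar{\alpha}^n,\bar{m}^n)-\langle D_\alpha J(\bar{\alpha}^n,\bar{m}^n),\alpha^*-\bar{\alpha}^n\rangle.\]
Since $J(\cdot,\bar{m}^n)$ is $c_L$-strongly convex (Hypothesis \ref{hyp: convexity in x}), $B_n\geq \frac{c_L}{2}\|\alpha^*-\bar{\alpha}^n\|_T^2$. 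So $B_n$ cannot be ``handled the same way'' by $Ca_n$: what you really obtain is $\frac{c_l}{2}\|\bar{\alpha}^n-\alpha^*\|_T^2\leq Ca_n+B_n$, which is circular. This is the same obstruction as in your first attempt, and your ``cleaner route'' is left unfinished. As long as gradients are evaluated at $\bar{\alpha}^n$, connecting them to the best response $\alpha^n$ forces you to move the finite-dimensional arguments of $\nabla_xG,\nabla_xL,\nabla_\alpha L$ from $(X^{\bar{\alpha}^n},\bar{\alpha}^n)$ to $(X^{\alpha^n},\alpha^n)$. Mere continuity in $(x,\alpha)$, along bounded but non-convergent sequences of $\left(\mathcal{H}^T\right)^d$, gives no bound in terms of $a_n$.

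The missing idea, which is the paper's, is to work at $\alpha^n$ and swap the \emph{measure} argument instead, since there the gradients are $\mathcal{W}_2$-Lipschitz.
\begin{itemize}
\item The first order condition of Lemma \ref{lemma: first order optimality mfg} for the best response gives $\langle D_\alpha J(\alpha^n,\bar{m}^n),\alpha-\alpha^n\rangle\geq 0$ for all $\alpha$.
\item Replacing $\bar{m}^n_t$ by $m^n_t=\mathcal{L}(X^{\alpha^n}_t,\alpha^n_t|\mathcal{F}^0_t)$ costs at most $C a_n\|\alpha-\alpha^n\|_T$. This is because $\mathcal{W}_2(m^n_t,\bar{m}^n_t)$ is controlled by the natural coupling of $(X^{\alpha^n}_t,\alpha^n_t)$ and $(X^{\bar{\alpha}^n}_t,\bar{\alpha}^n_t)$.
\item By the potential structure, $D_\alpha J(\alpha^n,m^n)=D\mathcal{J}(\alpha^n)$.
\end{itemize}
Hence $\langle D\mathcal{J}(\alpha^n),\alpha^*-\alpha^n\rangle\geq -Ca_n\|\alpha^*-\alpha^n\|_T$.

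The paper combines this with \eqref{eq: first order optimality mfc} at $\alpha^*$ and the strong monotonicity of $D\mathcal{J}$. Your value-based inequality also works once moved to $\alpha^n$, since $\frac{c_l}{2}\|\alpha^n-\alpha^*\|_T^2\leq \mathcal{J}(\alpha^n)-\mathcal{J}(\alpha^*)\leq \langle D\mathcal{J}(\alpha^n),\alpha^n-\alpha^*\rangle\leq Ca_n\|\alpha^n-\alpha^*\|_T$. Either way $\|\alpha^n-\alpha^*\|_T\leq Ca_n\to 0$, with no modulus of continuity in $(x,\alpha)$ needed. The convergence $\bar{\alpha}^n\to\alpha^*$ then follows from Corollary \ref{corol: difference fictitious play tends to 0}.
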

\begin{proof}
Let us insist once again that under Hypothesis \ref{hyp: coupling convexity}, Hypothesis \ref{hyp: convexity in x} is satisfied by $G,L$. Consequently the sequence $(\alpha^n,\bar{\alpha}^n)_{n\in \mathbb{N}}$ generated by the fictitious play is well defined, uniformly bounded in $\left(\mathcal{H}^T\right)^d$ by Lemma \ref{lemma: boundedness in H^T, fictitious play} and such that 
\[\lim_{n\to \infty} \| \alpha^n-\bar{\alpha}^n\|_T=0,\]
by Lemma \ref{corol: difference fictitious play tends to 0}. On the other hand, using the first order optimality conditions Lemma \ref{lemma: first order optimality mfg}, for any $n\in \mathbb{N}$
\begin{gather*}
\forall (\alpha_t)_{t\in [0,T]}\in \left(\mathcal{H}^T\right)^d\\
     \esp{\nabla_x G(X^{\alpha^n}_T,\bar{\mu}^n_T)\cdot (X^{\alpha}_T-X^{\alpha^n}_T)+\int_0^T \left(\begin{array}{c} \nabla_x L(t,X^{\alpha^n}_t,\alpha^n_t,\bar{m}^n_t)\\ \nabla_\alpha L(t,X^{\alpha^n}_t,\alpha^n_t,\bar{m}^n_t) \end{array}\right)\cdot \left(\begin{array}{c} X^{\alpha}_t-X^{\alpha^n}_t\\ \alpha_t-\alpha^n_t\end{array}\right) dt}\geq 0,\\
\bar{\mu}^n_T=\mathcal{L}(X^{\bar{\alpha}^n}_T|\mathcal{F}^0_T),\\ \forall t\in [0,T],\quad \bar{m}^n_t=\mathcal{L}(X^{\bar{\alpha}^n}_t,\bar{\alpha}^n_t|\mathcal{F}^0_t).
\end{gather*} 
Using the fact that $\nabla_x G, \nabla_x L, \nabla_\alpha L$ are Lipschitz with respect to the measure argument uniformly with respect to the other variables, as well as Cauchy-Schwartz inequality in $\left(\mathcal{H}^T\right)^d$, we deduce that there exists a constant $C_{Lip}$ such that for any control $(\alpha_t)_{t\in [0,T]}\in \left(\mathcal{H}^T\right)^d$, 
\begin{gather*}
 \esp{\nabla_x G(X^{\alpha^n}_T,\mu^n_T)\cdot (X^{\alpha}_T-X^{\alpha^n}_T)+\int_0^T \left(\begin{array}{c} \nabla_x L(t,X^{\alpha^n}_t,\alpha^n_t,m^n_t)\\ \nabla_\alpha L(t,X^{\alpha^n}_t,\alpha^n_t,m^n_t) \end{array}\right)\cdot \left(\begin{array}{c} X^{\alpha}_t-X^{\alpha^n}_t\\ \alpha_t-\alpha^n_t\end{array}\right) dt}\\\geq -C_{Lip}(1+T^2)\|\alpha^n-\bar{\alpha}^n\|_T\|\alpha-\alpha^n\|_T
\end{gather*}
where 
\[\forall t\in [0,T],\quad \mu^n_t=\mathcal{L}(X^{\alpha^n}_t|\mathcal{F}^0_T), m^n_t=\mathcal{L}(X^{\alpha^n}_t,\alpha^n_t|\mathcal{F}^0_t).\]
Using this inequality for $\alpha=\alpha^*$, combining it with \eqref{eq: first order optimality mfc}, and using the displacement monotonicity of $G,L$ it follows that 
\[-c_L\|\alpha^*-\alpha^n\|^2_T\geq -C_{Lip}(1+T^2)\|\alpha^n-\bar{\alpha}^n\|_T\|\alpha^*-\alpha^n\|_T.\]
Consequently there exists a constant $C^*$ depending only on the Lipschitz constant of $\nabla_x G, \nabla_x L, \nabla_\alpha L$ with respect to the measure argument, $T$ and $c_L$ such that 
\[\|\alpha^*-\alpha^n\|_T\leq C^* \|\alpha^n-\bar{\alpha}^n\|_T.\]
In particular, 
\[\lim_{n\to \infty }\|\alpha^*-\alpha^n\|_T=0.\]
\end{proof}
Let us also remark that under appropriate convexity assumptions on the coefficients $L,G$, it is not necessary to consider potential MFGs. Indeed if we make the assumption that Hypothesis \ref{hyp: convexity in x} is in force and that the coefficients are flat monotone (a set of assumption which in general is stronger than displacement monotonicity), it is possible to show an analogous result to \cite{Perrin2020Fictitious} by proving that the exploitability associated to the game decreases at each step. The extension is quite straightforward in light of the arguments presented in this article and we do not make such developpement here. It is however unknown to us whether simple displacement monotonicity is sufficient to obtain convergence (as opposed to flat monotonicity+ convexity in the finite dimensional variables), and we leave such extension to the interested reader. 
\subsection{On common noise}
\subsubsection{Convergence without common noise}
Our main convergence result, Theorem \ref{thm: convergence to measure valued mfg} may appear very weak, especially compared to the convergence obtained in \cite{cardaliaguet-fictitious-play}. We claim that this is mainly due to the lack of compactness resulting from the common noise. Indeed, when there is no such noise, it is possible to obtain strong convergence results for controls under mild additional regularity assumptions. Although we do not prove this result in full detail let us briefly mention that the arguments used are quite similar to the ones we present in Section \ref{section: gradient descent}, namely the precompactness of the sequence $(\Lambda^n)_{n\in \mathbb{N}}$ and a stability result on FBSDEs. In this later section this stability follows from the non degeneracy of the idiosyncratic noise, whereas we can here rely on the monotonicity of coefficients following from Hypothesis \ref{hyp: convexity in x}, such stability result is quite standard, see for example \cite{extragradient-in-mfg} Remark 2.23.

\subsubsection{Other models of common noise}
\label{section: other common noise}
In this article we focused on the particular case of additive common noise. Although this model of noise is quite standard in MFG let us insist that our approach can be extended to more general types of common noises. Mainly we are interested in common noise affecting directly the coefficients, that is, we now consider coefficients 
\[\omega^0\mapsto (L(t,x,\alpha,m,\omega^0),G(x,\mu,\omega^0)).\]
In our opinion this kind of dependency is quite interesting from a modelling perspective since the noise now affects directly the environment of players through their cost functions. So long as the dependence on this noise is progressively measurable there is no particular difficulty in adapting our argument. Typical exemples include situations in which the cost of players depends on an additional noise process as described in \cite{meynard-1,meynard-2}
\begin{gather*}J(\alpha,(m_t)_{t\in [0,T]})=\esp{G(X^\alpha_T,p_T,\pi_d m_T)+\int_0^T L(t,X^\alpha_t,\alpha_t,p_t,m_t)dt},\\
    X^\alpha_t=X_0-\int_0^t \alpha_s ds+\sigma W_t,\\
    p_t=p_0+\int_0^t b(p_s)ds+\sigma^0W^0_t,
\end{gather*}
which are quite natural in many applications. Let us also insist that those models are not limited to a Brownian common noise but can be adapted to consider jump type common noises as well \cite{Bertucci-monotone}.
\subsection{Interpretation as a learning procedure}
\label{section: interpretation}
\subsubsection{Formation of equilibrium in mean field games}
In this section we present how the learning procedure we described in previous segments can be interpretated as a way players can accurately learn the control response of the crowd in repeated games, the state of the game thereby converging to a Nash equilibrium.  This idea comes from the seminal paper \cite{cardaliaguet-fictitious-play}, in which the authors proved agents can learn the distribution of other players. However let us stress the two major differences in our setting: First, players must learn not only the distribution of the other players but also that of their controls, moreover the averaging rule is quite different in our case since it takes place directly at the level of random variables. That is, players do not observe the distribution of other players directly but rather are able to observe the underlying random variable. Although this might seem less intuitive, we argue that there is a natural way of formulating this learning procedure. It does require to take a somewhat unusual interpretation of the MFG that is sometimes refered to as Lagrangian \cite{bertucci-livre}. In this setting, that we present formally and without common noise for now, the continuum of players is indexed by $\omega\in \Omega$. The state of a single player $\omega$ evolves according to 
\[X^{\omega}_t=X_0(\omega)-\int_0^t \alpha^{\omega}_s ds+\sigma W_t(\omega).\]
In this interpretation all players face a different realisation of the same Brownian motion $(W_t)_{t\in [0,T]}$. However they cannot observe directly observe this noise, which is why given a prior $(m_t)_{t\in [0,T]}$ for the distribution of all other players and their controls, they minimize the risk neutral cost 
\begin{gather*}\inf_\alpha \esp{g(X^\alpha_T,\pi_d m_T)+\int_0^T L(t,X^\alpha_t,\alpha_t,m_t)dt},\\
    X^\alpha_t=X_0(\omega)-\int_0^t \alpha_s ds+\sigma W_t.
\end{gather*}
On the other hand, a player $\omega^*$ can observe the response and state of the other players 
\[\left(\left(X_t^\omega,\alpha^\omega_t\right)_{t\in [0,T]}\right)_{\omega\neq \omega^*}.\]
We now have all the tools at hand to describe the learning procedure: Initially players all have the same belief on the actions of other players which we denotes $(\bar{X}^0_t(\omega),\bar{\alpha}^0_t(\omega))_{\omega\in \Omega}$. Moreover, each player $\omega$ is given a weight $d\mathbb{P}(\omega)$ in such fashion that players infer the distribution of other players through time to be 
\[m_t=\mathcal{L}^\mathbb{P}(\omega\mapsto (\bar{X}^0_t(\omega),\bar{\alpha}^0_t(\omega))).\]
It might seems arbitrary to give a different weight to different players. One possible interpretation is that the path $(X_t^\omega,\alpha^\omega_t)_{t\in [0,T]}$ is that of a representative player for all players facing the path $(X_0(\omega),(W_t(\omega))_{t\in [0,T]})$ and $d\mathbb{P}(\omega)$ indicates the proportion of players facing this realisation. Given the belief of players $(\bar{X}^n_t(\omega),\bar{\alpha}^n_t(\omega))_{\omega\in \Omega}$ at the stage $n+1$, an individual player $\omega^*$ plays the optimal control
\[t\mapsto (X^{n+1}_t(\omega^*),\alpha^{n+1}_t(\omega^*)).\]
Moreover they observes the path taken by other players 
\[\left(t\mapsto (X^{n+1}_t(\omega),\alpha^{n+1}_t(\omega))\right)_{\omega\neq \omega^*}.\]
Given those path, they update their belief directly on the trajectory of other players: they assume that at the next iteration the path followed by a player $\omega\neq \omega^*$ will be 
\[t\mapsto \eta_n(X^{n+1}_t(\omega),\alpha^{n+1}_t(\omega))+(1-\eta_n)(\bar{X}^{n}_t(\omega),\bar{\alpha}^{n}_t(\omega)),\]
where the pair $(\alpha^n_t,\bar{\alpha}^n_t)_{t\in [0,T]}$ indicates the sequence generated by the fictitious play procedure at step $n$ starting from the initial control $\bar{\alpha}^0$.
Having updated their belief on the trajectory followed by other players, individual players now infer that the distribution will be given by 
\[\bar{m}^{n+1}_t=\mathcal{L}^\mathbb{P}(\eta_n(X^{n+1}_t,\alpha^{n+1}_t)+(1-\eta_n)(\bar{X}^n_t,\bar{\alpha}^n_t)),\]
which is exactly the learning rule we proposed in this article. While the presentation of this learning rule is slightly technical (and not quite rigorous here), we believe that it is quite natural for players to learn the game by observing and trying to predict directly the actions of the other players, and not just the resulting distribution. In fact, in practise it is often easier for a player to observe the state and choices (controls) of their neighbour rather than the distribution of the position and controls of all players directly. Compared to \cite{cardaliaguet-fictitious-play}, the key takeaway, is we describe a learning procedure directly on the individual path taken by each player instead of on the global state of the system (distribution of players).

In the presence of common noise, the interpretation of the learning procedure is the same, except that players now average the actions of the crowd for each $\omega^0$ realisation of the common noise, that is, they update their belief according to 
\[(t,\omega^0)\mapsto \eta_n(X^{n+1}_t(\omega,\omega^0),\alpha^{n+1}_t(\omega,\omega^0))+(1-\eta_n)(\bar{X}^{n}_t(\omega,\omega^0),\bar{\alpha}^{n}_t(\omega,\omega^0)).\]
\begin{remarque}
Let us observe that in this approach, all players admits the same feedback control $(t,x)\mapsto \alpha(t,x)$, in particular this implies that the representative player problem
\begin{gather*}\inf_\beta \esp{g(X^\beta_T,\pi_d m_T)+\int_0^T L(t,X^\beta_t,\beta_t,m_t)dt},\\
    X^\beta_t=X_0-\int_0^t \beta_s ds+\sigma W_t.
\end{gather*}
is such that for its optimal control $\beta^*$ we have for any $\omega^*\in \Omega$
\[(X^{\beta^*}_t,\beta^*_t)(\omega^*)=(X^{\omega^*}_t,\alpha^{\omega^*}_t).\]
Consequently the representative player process is exactly the process which to each player $\omega$ maps its optimal control and state.
\end{remarque}
\subsubsection{Black box learning of equilibria}
Clearly this learning procedure can be used to generate a sequence converging to a (measure valued) MFG equilibrium (Theorem \ref{thm: convergence to measure valued mfg}) and even to the unique Nash equilibrium in the monotone setting (Theorem \ref{thm: cv monotone setting}). However let us insist that no knowledge of the coefficients $L,G$ is required to carry out the learning procedure. If we possess a block box algorithm that given an priori $(m_t)_{t\in [0,T]}$ for the distribution of players and their controls yields optimal trajectories for this profile, then it is possible to use the proposed fictitious play to learn Nash equilibria without even having to estimate the coefficients. We believe this approach could be useful to learn from real world data or for agent learning in machine learning in which realised trajectories for the control and state of players may be available as data without prior knowledge of the optimisation problem considered. In this last case common noise can also be viewed as learning in a random environment and noise models as considered in Section \ref{section: other common noise} are quite natural.
\section{Gradient descent on the space of stochastic processes}
\label{section: gradient descent}
\subsection{Introduction}
In this section we now set $\sigma^0=0$, which is to say we consider dynamics without common noise, this is mainly a choice of convenience to avoid very technical details. However, we do give some insights on how to extend this idea to problems with common noise. While the first part of this article was dedicated to agent learning and formation of equilibria in mean field games, this second part concerns learning in mean field control directly. Namely, we present how under mild conditions on coefficients a principal agent trying to solve by gradient descent the mean field control problem
\[\inf_\alpha \left\{\int_0^T l(t,\mathcal{L}(X^\alpha_t,\alpha_t))dt+ g(\mathcal{L}(X^\alpha_T))\right\},\]
under the dynamics 
\[X^\alpha_t=X_0-\int_0^t \alpha_sds+\sigma W_t,\]
will eventually reach a local extremum of the potential $\mathcal{J}$, as it turns out, any such extremum is also a solution of the mean field game problem \eqref{eq: mfg equilibrium condition}. In consequence, this gradient descent algorithm on the space of controls yields an iterative procedure which converges to an equilibrium of the mean field game problem, hence the connection to learning controls in MFGs. To an extent, this also relates to the formation of equilibrium in MFG: a principal agent trying to minimize the potential $\mathcal{J}$ using only local gradient information will converge to a MFG equilibrium of the underlying game.
In this section we make the following assumptions
\begin{hyp}
    \label{hyp: reg mfc}
    The functions $l,g$ are continuously differentiable and with $L=\frac{\delta l}{\delta m}$, $G=\frac{\delta g}{\delta \mu}$ the functions $\nabla_x L,\nabla_\alpha L,\nabla_x G$ are Lipschitz in all arguments with constant $C_{Lip}$, for $\mathcal{W}_q$ with some $1\leq q<2$ in the measure argument. 
\end{hyp}
Those growth and regularity assumptions are in no way necessary and can be adapted to more general models (local regularity, power type growth...) provided that corresponding coercivity assumptions are imposed. In our case this takes the form

\begin{hyp}
    \label{hyp: coef mfc}
There exists a constant $C_b>0$ such that for any $(t,\mu,m)\in [0,T]\times\mathcal{P}_2(\reels^d)\times \mathcal{P}_2(\reels^{2d})$,
\[g(\mu)\geq -C_b,\]
\[l(t,m)\geq -C_b+\frac{1}{C_b}\int_{(x,a)\in \reels^{2d}} |a|^2 m(dx,da).\]
Moreover for any $(t,m)\in [0,T]\times \mathcal{P}(\reels^{2d})$, the function 
\[\alpha \mapsto L(t,x,\alpha,m),\]
is strongly convex uniformly in $x\in \reels^d$.
\end{hyp}
In particular under this assumption the Hamiltonian 
\[H(t,x,p,m)=\inf_\alpha \left\{L(t,x,\alpha,m)-\alpha\cdot p\right\},\]
is well defined, has Lipschitz gradient $\nabla_xH,\nabla_p H$ and is such that $p\mapsto \nabla_p H(t,x,p,m)$ is the inverse function of 
\[\alpha \mapsto \nabla_\alpha L(t,x,\alpha,m).\]
This is in fact the only reason we require $L$ to be convex in $\alpha$. Once again most results presented in this section could be adapted to more general settings, but we aim at presenting the key idea instead of making a very general claim.

\begin{lemma}
    Under the assumptions \ref{hyp: reg mfc} and \ref{hyp: coef mfc}, the function from $(\mathcal{H}^T)^d$ into $\reels$ 
    \[\alpha \mapsto \mathcal{J}(\alpha),\]
    admits a gradient on $\mathcal{H}^T$ which is the process given by 
\[\forall t\in [0,T],\quad \nabla\mathcal{J}(\alpha)_t=\nabla_\alpha L(X^\alpha_t,\alpha_t,\mathcal{L}(X^\alpha_t,\alpha_t))-U^\alpha_t,\]
where $(U^\alpha_t)_{t\in [0,T]}$ is defined through the BSDE
\begin{equation}
    \label{eq: bsde in gradient mfc}
    U^\alpha_t=\nabla_x G(X^\alpha_T,\mathcal{L}(X^\alpha_T))-\int_t^T \nabla_x L(X^\alpha_s,\alpha_s,\mathcal{L}(X^\alpha_s,\alpha_s))dt-\int_t^T Z^\alpha_sdW_s.\end{equation}
Moreover $\nabla \mathcal{J}$ is Lipschitz on $\left(\mathcal{H}^T\right)^d$ with a constant depending only on $C_{Lip}$ and $T$.
\end{lemma}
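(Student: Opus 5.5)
The plan is to compute the Gâteaux derivative of $\mathcal{J}$ at $\alpha$ in a direction $\beta\in(\mathcal{H}^T)^d$, identify it with $\langle \nabla_\alpha L(X^\alpha_t,\alpha_t,m^\alpha_t)-U^\alpha_t,\beta\rangle^T$ via a duality (Itô) argument, and then deduce Lipschitz continuity of this representative. Since $\sigma^0=0$, we write $m^\alpha_t=\mathcal{L}(X^\alpha_t,\alpha_t)$ and $\mu^\alpha_T=\mathcal{L}(X^\alpha_T)$.

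\textbf{Directional derivative.} For $\varepsilon\in[0,1]$, the state equation is affine in the control, so $X^{\alpha+\varepsilon\beta}_t=X^\alpha_t-\varepsilon\int_0^t\beta_sds$. Set $Y^\beta_t=-\int_0^t\beta_sds$. Following the computation already made in the proof of Lemma \ref{lemma: decreasing potential fictitious play}, the definition of the linear derivative together with $G=\frac{\delta g}{\delta\mu}$ gives
\[g(\mathcal{L}(X^{\alpha+\varepsilon\beta}_T))-g(\mu^\alpha_T)=\varepsilon\int_0^1\esp{\nabla_xG(X^\alpha_T+\lambda\varepsilon Y^\beta_T,\mathcal{L}(X^\alpha_T+\lambda\varepsilon Y^\beta_T))\cdot Y^\beta_T}d\lambda .\]
This uses the standard identity that differentiating $\lambda\mapsto g(\mathcal{L}(\xi_\lambda))$ along random variables produces $\esp{\partial_xG(\xi_\lambda,\mathcal{L}(\xi_\lambda))\cdot\dot\xi_\lambda}$. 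The $l$ term is handled the same way. Because $\nabla_xG,\nabla_xL,\nabla_\alpha L$ are $C_{Lip}$-Lipschitz in all arguments, with $\mathcal{W}_q\le\mathcal{W}_2$ controlled by the $L^2$ coupling, the integrands differ from their values at $\lambda\varepsilon=0$ by at most $C\varepsilon(|Y^\beta|+|\beta|+\|Y^\beta\|+\|\beta\|)$. Hence
\[\mathcal{J}(\alpha+\varepsilon\beta)-\mathcal{J}(\alpha)=\varepsilon\,\esp{\nabla_xG(X^\alpha_T,\mu^\alpha_T)\cdot Y^\beta_T+\int_0^T\big(\nabla_xL\cdot Y^\beta_t+\nabla_\alpha L\cdot\beta_t\big)dt}+O(\varepsilon^2\|\beta\|_T^2),\]
with all gradients evaluated at $(X^\alpha_t,\alpha_t,m^\alpha_t)$ and using $\|Y^\beta_t\|\le\sqrt{T}\|\beta\|_T$. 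This gives Fréchet differentiability, not just Gâteaux, since the remainder is quadratic and uniform in $\beta$.

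\textbf{Duality.} The BSDE \eqref{eq: bsde in gradient mfc} is well posed by \cite{Zhang2017BSDE} Theorem 4.3.1: its driver does not depend on $(U,Z)$, and its terminal value and driver lie in $L^2$ by linear growth. Applying Itô's formula to $U^\alpha_t\cdot Y^\beta_t$ on $[0,T]$, with $Y^\beta_0=0$, $dY^\beta_t=-\beta_tdt$ and $dU^\alpha_t=\nabla_xL\,dt+Z^\alpha_tdW_t$, and killing the stochastic integral in expectation by a localization and BDG argument, yields
\[\esp{\nabla_xG(X^\alpha_T,\mu^\alpha_T)\cdot Y^\beta_T+\int_0^T\nabla_xL\cdot Y^\beta_tdt}=-\esp{\int_0^TU^\alpha_t\cdot\beta_tdt}.\]
Substituting this into the first-order expansion identifies the gradient as stated, exactly as in Step 2 of Lemma \ref{lemma: characterisation of control with fbsde}.

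\textbf{Lipschitz bound.} This is the step needing the most care, though it remains routine. Take $\alpha,\alpha'$. First, $\|X^\alpha_t-X^{\alpha'}_t\|\le\sqrt{T}\|\alpha-\alpha'\|_T$, and the measures satisfy $\mathcal{W}_2(m^\alpha_t,m^{\alpha'}_t)\le\|(X^\alpha_t-X^{\alpha'}_t,\alpha_t-\alpha'_t)\|$. The Lipschitz property of $\nabla_\alpha L$ then controls the first term by $C(1+T)\|\alpha-\alpha'\|_T$. For the $U$ part, the standard $L^2$ stability estimate for BSDEs with driver independent of $(U,Z)$ is obtained by writing $U^\alpha_t-U^{\alpha'}_t$ as a conditional expectation of the difference of terminal values minus the integrated drivers. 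It gives
\[\sup_t\esp{|U^\alpha_t-U^{\alpha'}_t|^2}\le C\Big(\esp{|\Delta\nabla_xG|^2}+T\esp{\int_0^T|\Delta\nabla_xL|^2}\Big)\le C(T,C_{Lip})\|\alpha-\alpha'\|_T^2 .\]
Integrating in time concludes, with a constant depending only on $C_{Lip}$ and $T$.
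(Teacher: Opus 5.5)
Your duality step has a sign error, and it is not cosmetic. For the BSDE as stated you correctly have $dU^\alpha_t=\nabla_xL\,dt+Z^\alpha_t\,dW_t$. It\^o's formula for $U^\alpha_t\cdot Y^\beta_t$, with $Y^\beta_0=0$ and $dY^\beta_t=-\beta_t\,dt$, then gives
\[
\mathbb{E}\big[\nabla_xG(X^\alpha_T,\mu^\alpha_T)\cdot Y^\beta_T\big]-\mathbb{E}\Big[\int_0^T\nabla_xL\cdot Y^\beta_t\,dt\Big]=-\mathbb{E}\Big[\int_0^TU^\alpha_t\cdot\beta_t\,dt\Big].
\]
There is a minus in front of the $\nabla_xL$ term, so the identity you substitute into the first-order expansion is false for the $U^\alpha$ of the statement.

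The defect is in the statement itself. The formula $\nabla\mathcal{J}(\alpha)_t=\nabla_\alpha L-U^\alpha_t$ holds only if the driver enters with the opposite sign:
\[
U^\alpha_t=\nabla_xG(X^\alpha_T,\mu^\alpha_T)+\int_t^T\nabla_xL\,ds-\int_t^TZ^\alpha_s\,dW_s .
\]
Equivalently, $U^\alpha_t=\mathbb{E}[\nabla_xG(X^\alpha_T,\mu^\alpha_T)+\int_t^T\nabla_xL\,ds\,|\,\mathcal{F}_t]$. A quick check within the hypotheses: take $g\equiv0$ and $l(m)=\int(\frac12|a|^2+\phi(x))\,m(dx,da)$ with $\phi(x)=\sqrt{1+|x|^2}$, so $L=\frac12|a|^2+\phi(x)$. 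Fubini and the adaptedness of $\beta$ give directly
\[
\nabla\mathcal{J}(\alpha)_t=\alpha_t-\mathbb{E}\Big[\int_t^T\nabla\phi(X^\alpha_s)\,ds\,\Big|\,\mathcal{F}_t\Big].
\]
The stated BSDE instead yields $U^\alpha_t=-\mathbb{E}[\int_t^T\nabla\phi(X^\alpha_s)\,ds\,|\,\mathcal{F}_t]$, and hence the opposite sign.

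The paper's proof only invokes ``an application of Ito's Lemma'' without displaying the computation, so it does not catch this. The same sign convention appears in the characterisation lemma of Section 2 and in the numerical scheme. Your write-up reaches the printed formula only through a compensating slip. Once the driver's sign is flipped, your It\^o computation is exactly right and the gradient identification is complete.

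Apart from this, your argument is sound, and in two respects it goes beyond the paper's. First, your remainder $O(\varepsilon^2\|\beta\|_T^2)$, uniform in $\beta$, gives Fr\'echet differentiability, whereas the paper only computes a directional limit. Second, for the Lipschitz claim the paper bounds $\langle\nabla\mathcal{J}(\alpha)-\nabla\mathcal{J}(\beta),\beta-\alpha\rangle^T$ by duality and Cauchy--Schwarz. That controls an inner product, not $\|\nabla\mathcal{J}(\alpha)-\nabla\mathcal{J}(\beta)\|_T$, and turning it into a norm bound needs an extra convex-analysis argument that is not given. Your route combines the Lipschitz bound on the $\nabla_\alpha L$ term with the conditional-expectation stability estimate for a BSDE whose driver does not depend on $(U,Z)$. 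This gives the norm bound directly, which is what is used later to obtain $\big|\|\nabla\mathcal{J}(\alpha^{n+1})\|_T^2-\|\nabla\mathcal{J}(\alpha^n)\|_T^2\big|\le C\eta_n$. It is also unaffected by the sign of the driver.
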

\begin{proof}
    Consider two controls $\alpha,\beta\in \mathcal{H}^T$, by standard computations already justified in the proof of Lemma \ref{lemma: first order optimality mfg}, we find that 
\begin{gather*}\lim_{\lambda \to 0}\left(\frac{\mathcal{J}(\alpha+\lambda (\beta-\alpha))-\mathcal{J}(\alpha)}{\lambda}\right)\\
=\esp{\nabla_x G(X^\alpha_T,\mathcal{L}(X^\alpha_T))\cdot (X^\beta_T-X^\alpha_T)}\\
+\esp{\int_0^T\left(\begin{array}{c} \nabla_x L(X^\alpha_t,\alpha_t,\mathcal{L}(X^\alpha_t,\alpha_t))\\ \nabla_\alpha L(X^\alpha_t,\alpha_t,\mathcal{L}(X^\alpha_t,\alpha_t)) \end{array}\right)\cdot \left(\begin{array}{c} X^{\beta}_t-X^{\alpha}_t\\ \beta_t-\alpha_t\end{array}\right) dt}
\end{gather*}
We now introduce $(U^\alpha_t)_{t\in [0,T]}$ solution to the BSDE \eqref{eq: bsde in gradient mfc}, which has a unique strong solution since by assumption 
\[\esp{\int_0^T |\alpha_s|^2ds}<+\infty,\]
and coefficients are Lipschitz \cite{Zhang2017BSDE}. By an application of Ito's Lemma we find that 
\begin{gather*}\lim_{\lambda \to 0}\left(\frac{\mathcal{J}(\alpha+\lambda (\beta-\alpha))-\mathcal{J}(\alpha)}{\lambda}\right)\\
=\esp{\int_0^T \left(\nabla_\alpha L(X^\alpha_t,\alpha_t,\mathcal{L}(X^\alpha_t,\alpha_t))-U^\alpha_t\right)\cdot(\beta_t-\alpha_t)dt }\\
=\langle \nabla \mathcal{J}(\alpha),\beta-\alpha\rangle^T,
\end{gather*}
where $\nabla\mathcal{J}(\alpha)$ is the process in $\mathcal{H}^T$ defined by 
\[\forall t\in [0,T],\quad \nabla\mathcal{J}(\alpha)_t=\nabla_\alpha L(X^\alpha_t,\alpha_t,\mathcal{L}(X^\alpha_t,\alpha_t))-U^\alpha_t.\]
Moreover, we check easily that under Hypothesis \ref{hyp: reg mfc}, this gradient is Lipschitz on $\mathcal{H}^T$. Indeed for any two controls $\alpha,\beta \in \mathcal{H}^T$, 
\begin{gather*}\langle \nabla \mathcal{J}(\alpha)-\nabla \mathcal{J}(\beta),\beta-\alpha\rangle^T\\
=\esp{(\nabla_x G(X^\alpha_T,\mathcal{L}(X^\alpha_T))-\nabla_x G(X^\beta_T,\mathcal{L}(X^\beta_T)))\cdot (X^\beta_T-X^\alpha_T)}\\
+\esp{\int_0^T\left(\begin{array}{c} \nabla_x L(X^\alpha_t,\alpha_t,\mathcal{L}(X^\alpha_t))-\nabla_x L(X^\beta_t,\beta_t,\mathcal{L}(X^\beta_t,\beta_t))\\ \nabla_\alpha L(X^\alpha_t,\alpha_t,\mathcal{L}(X^\alpha_t,\alpha_t)) -\nabla_\alpha L(X^\beta_t,\beta_t,\mathcal{L}(X^\beta_t,\beta_t)) \end{array}\right)\cdot \left(\begin{array}{c} X^{\beta}_t-X^{\alpha}_t\\ \beta_t-\alpha_t\end{array}\right) dt}\\
\leq C_{Lip}(1+T+\sqrt{T})\|\alpha-\beta\|_T^2,
\end{gather*}
where we have used, Jensen and Cauchy-Schwartz inequality in $\mathcal{H}^T$.
\end{proof}
Let us remark that this gradient structure is very much related to the idea of reformulating mean field games as variational inequalities on the space of stochastic processes introduced in \cite{extragradient-in-mfg}. It is well known that extragradient methods can (very formally) be seen as a generalization of gradient descent for non potential variational inequalities. In the potential framework (mean field control), we observe the same phenomenon for MFGs. 
\subsection{Gradient descent}
We now investigate the associated gradient descent on $\mathcal{H}^T$
\begin{equation}
    \label{eq: gradient descent}
    \alpha^{n+1}=\alpha^n-\eta_n \nabla \mathcal{J}(\alpha^n),\end{equation}
with $(\eta_n)_{n\in \mathbb{N}}\subset [0,1]$ a non increasing sequence of step sizes. We first prove that along the proposed descent, the potential $\mathcal{J}$ is decreasing 
\begin{lemma}
\label{lemma: iteration}
Under the update rule \eqref{eq: gradient descent}, for any initial condition $\alpha^0\in \mathcal{H}^T$, the following holds 
\begin{equation}
\label{eq: iteration}
\mathcal{J}(\alpha^{n+1})-\mathcal{J}(\alpha^n)\leq \left(\|\nabla \mathcal{J}\|_{Lip}\eta_n^2-\eta_n\right)\|\nabla\mathcal{J}(\alpha^n)\|^2_T.\end{equation}
In particular if \[\lim_{n\to \infty}\eta_n=0,\]
then the sequence $(\alpha^n)_{n\in \mathbb{N}}$ is uniformly bounded in $\mathcal{H}^T$ by a constant depending only on $C_{Lip},T,\alpha^0$ and $(\eta_n)_{n\in \mathbb{N}}$.
\end{lemma}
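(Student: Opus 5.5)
The plan is to apply the standard descent lemma for functions with Lipschitz gradient on the Hilbert space $\left(\mathcal{H}^T\right)^d$. The previous lemma gives this structure: $\mathcal{J}$ is Gâteaux differentiable with gradient $\nabla\mathcal{J}$, and $\nabla \mathcal{J}$ is Lipschitz with constant $\|\nabla \mathcal{J}\|_{Lip}$ depending only on $C_{Lip}$ and $T$.

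\textbf{Descent inequality.} Write $\beta=\alpha^{n+1}-\alpha^n=-\eta_n\nabla\mathcal{J}(\alpha^n)$ and $\varphi(\lambda)=\mathcal{J}(\alpha^n+\lambda\beta)$ for $\lambda\in[0,1]$.
\begin{enumerate}
\item By the directional-derivative computation in the previous lemma, applied at the point $\alpha^n+\lambda\beta$, $\varphi$ is differentiable with $\varphi'(\lambda)=\langle \nabla\mathcal{J}(\alpha^n+\lambda\beta),\beta\rangle^T$.
\item The map $\lambda\mapsto\varphi'(\lambda)$ is continuous because $\nabla\mathcal{J}$ is Lipschitz. So the fundamental theorem of calculus applies:
\[\mathcal{J}(\alpha^{n+1})-\mathcal{J}(\alpha^n)=\langle\nabla\mathcal{J}(\alpha^n),\beta\rangle^T+\int_0^1\langle \nabla\mathcal{J}(\alpha^n+\lambda\beta)-\nabla\mathcal{J}(\alpha^n),\beta\rangle^T d\lambda .\]
\item Cauchy--Schwarz and the Lipschitz bound make the integral at most $\frac12\|\nabla \mathcal{J}\|_{Lip}\|\beta\|_T^2$.
\item Substituting $\beta=-\eta_n\nabla\mathcal{J}(\alpha^n)$ gives the right-hand side $\left(\frac12\|\nabla \mathcal{J}\|_{Lip}\eta_n^2-\eta_n\right)\|\nabla\mathcal{J}(\alpha^n)\|_T^2$. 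This is at most the bound in \eqref{eq: iteration}.
\end{enumerate}

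\textbf{Uniform bound.} Assume $\eta_n\to0$.
\begin{enumerate}
\item Choose $N$, depending only on $(\eta_n)$ and $\|\nabla\mathcal{J}\|_{Lip}$, such that $\|\nabla \mathcal{J}\|_{Lip}\eta_n\le 1$ for $n\ge N$. Then $\mathcal{J}(\alpha^n)$ is non-increasing from rank $N$ on, so $\mathcal{J}(\alpha^n)\le\mathcal{J}(\alpha^N)$ for all $n\ge N$.
\item For the first $N$ steps, use the Lipschitz bound $\|\nabla\mathcal{J}(\alpha)\|_T\le \|\nabla\mathcal{J}(0)\|_T+\|\nabla \mathcal{J}\|_{Lip}\|\alpha\|_T$ with $\eta_n\le1$. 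This gives $\|\alpha^{n+1}\|_T\le (1+\|\nabla \mathcal{J}\|_{Lip})\|\alpha^n\|_T+\|\nabla\mathcal{J}(0)\|_T$, so $\|\alpha^n\|_T$ for $n\le N$ is bounded by a constant depending only on $\alpha^0$, $N$, $C_{Lip}$ and $T$.
\item For $n\ge N$, invoke the coercivity of Hypothesis \ref{hyp: coef mfc}. With $\sigma^0=0$ we have $l(t,m)\ge -C_b+\frac1{C_b}\int|a|^2m(dx,da)$ and $g\ge -C_b$, hence
\[\frac{1}{C_b}\|\alpha^n\|_T^2-C_b(1+T)\le \mathcal{J}(\alpha^n)\le \mathcal{J}(\alpha^N).\]
\item Finally bound $\mathcal{J}(\alpha^N)$ in terms of $\|\alpha^N\|_T$ from the previous step. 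Use Lipschitz-gradient growth along the segment from $0$ to $\alpha^N$: $\mathcal{J}(\alpha)\le \mathcal{J}(0)+\|\nabla\mathcal{J}(0)\|_T\|\alpha\|_T+\frac12\|\nabla\mathcal{J}\|_{Lip}\|\alpha\|_T^2$. Here $\mathcal{J}(0)$ and $\nabla\mathcal{J}(0)$ are finite quantities fixed by the data and $X_0$.
\end{enumerate}

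\textbf{Main obstacle.} I expect the delicate point to be the descent inequality, specifically justifying $\varphi'(\lambda)=\langle\nabla\mathcal{J}(\alpha^n+\lambda\beta),\beta\rangle^T$ for every $\lambda$ and its continuity. Both follow from the previous lemma holding at every point of $\left(\mathcal{H}^T\right)^d$, together with the Lipschitz property. The bookkeeping in the uniform bound is routine by comparison.
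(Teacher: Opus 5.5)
Your proposal is correct and follows the paper's argument: you obtain the descent inequality from the integral form of $\mathcal{J}(\alpha^{n+1})-\mathcal{J}(\alpha^n)$ and the Lipschitz bound on $\nabla\mathcal{J}$ (with the sharper factor $\tfrac12$). The uniform bound then follows, as in the paper, from monotonicity of $\mathcal{J}(\alpha^n)$ past a rank where $\|\nabla\mathcal{J}\|_{Lip}\eta_n\le 1$, control of the finitely many initial steps, and the coercivity $\mathcal{J}(\alpha)\ge -(1+T)C_b+\frac{1}{C_b}\|\alpha\|_T^2$; the only cosmetic difference is that you bound $\mathcal{J}(\alpha^N)$ by a quadratic upper bound from $0$ instead of telescoping from $\mathcal{J}(\alpha^0)$.
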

\begin{proof}
By standard Gronwall type estimates, it is straightforward that if $\alpha^0\in \left(\mathcal{H}^T\right)^d$ then,  for any $n\in \mathbb{N}$, $\alpha^n\in \mathcal{H}^T$. Fixing $n\in \mathbb{N}$, we observe that 
\begin{align*}
\mathcal{J}(\alpha^{n+1})-\mathcal{J}(\alpha^n)&=\int_0^1\langle \nabla \mathcal{J}(\lambda \alpha^{n+1}+(1-\lambda) \alpha^n),\alpha^{n+1}-\alpha^n\rangle^Td\lambda\\
&\leq \langle \nabla \mathcal{J}(\alpha^n),\alpha^{n+1}-\alpha^n\rangle^T+\|\nabla\mathcal{J}\|_{Lip}\|\alpha^{n+1}-\alpha^n\|^2.
\end{align*}
Since 
\[\alpha^{n+1}-\alpha^n=-\eta_n \nabla \mathcal{J}(\alpha^n),\]
we find that 
\[\mathcal{J}(\alpha^{n+1})-\mathcal{J}(\alpha^n)\leq \left(\|\nabla \mathcal{J}\|_{Lip}\eta_n^2-\eta_n\right)\|\nabla\mathcal{J}(\alpha^n)\|^2_T.\]
As for the second statement, if $(\eta_n)_{n\in \mathbb{N}}$ tends to $0$, then there exists a $n_0\in \mathbb{N}$ such that 
\[\forall n\geq n_0,\quad \|\nabla \mathcal{J}\|_{Lip}\eta_n^2-\eta_n\leq 0.\]
It follows that for any such $n$
\[\mathcal{J}(\alpha^n)\leq \mathcal{J}(\alpha^0)+\|\nabla\mathcal{J}\|_{Lip}\sum_{i=1}^{n_0}\eta_i^2 \|\nabla\mathcal{J}(\alpha^i)\|_T^2.\]
By the growth of coefficients and Gronwall Lemma the righthand side is bounded. On the other hand by the coercivity of $l$ and the boundedness from below of $g$ we find that 
\[\mathcal{J}(\alpha^n)\geq -(1+T)C_b+\frac{1}{C_b}\|\alpha^n\|^2_T,\]
which yields the uniform bound in $\mathcal{H}^T$ on the sequence $(\alpha^n)_{n\in \mathbb{N}}$.
\end{proof}
In consequence we deduce that 
\begin{lemma}
\label{lemma: convergence of gradient}
Let $(\eta_n)_{n\in \mathbb{N}}$ satisfies 
\begin{equation}
\label{eq: condition eta}\sum_{n=1}^{+\infty} \eta_n=+\infty,\quad \sum_{n=1}^{+\infty}\eta_n^2<+\infty,\end{equation}
then for any $\alpha^0\in \mathcal{H}^T$, the sequence $(\alpha^n)_{n\in \mathbb{N}}$ generated by \eqref{eq: gradient descent} is bounded in $\mathcal{H}^T$ and 
\[\lim_{n\to \infty} \|\nabla \mathcal{J}(\alpha^n)\|_T=0.\]
\end{lemma}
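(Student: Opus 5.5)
Our plan is to follow the same scheme as Corollary \ref{corol: difference fictitious play tends to 0}, with $a_n=\|\nabla\mathcal{J}(\alpha^n)\|^2_T$ in place of $\|\alpha^n-\bar{\alpha}^n\|^2_T$, and to conclude with Lemma \ref{lemma: step size sequence}.

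\textbf{Boundedness.} Condition \eqref{eq: condition eta} forces $\eta_n\to 0$. The second statement of Lemma \ref{lemma: iteration} then gives a uniform bound on $(\alpha^n)_{n\in\mathbb{N}}$ in $\mathcal{H}^T$. Since $\nabla\mathcal{J}$ is Lipschitz on $(\mathcal{H}^T)^d$, we get
\[\|\nabla\mathcal{J}(\alpha^n)\|_T\leq \|\nabla\mathcal{J}(\alpha^0)\|_T+\|\nabla\mathcal{J}\|_{Lip}\|\alpha^n-\alpha^0\|_T,\]
so $(\|\nabla\mathcal{J}(\alpha^n)\|_T)_{n\in\mathbb{N}}$ is bounded by some $M$.

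\textbf{Summability.} Summing \eqref{eq: iteration} from $k=0$ to $n$ gives
\[\sum_{k=0}^n \eta_k\|\nabla\mathcal{J}(\alpha^k)\|^2_T\leq \mathcal{J}(\alpha^0)-\mathcal{J}(\alpha^{n+1})+\|\nabla\mathcal{J}\|_{Lip}M^2\sum_{k=0}^n\eta_k^2.\]
By Hypothesis \ref{hyp: coef mfc}, $\mathcal{J}$ is bounded from below by $-(1+T)C_b$. Together with $\sum_k\eta_k^2<+\infty$, this yields $\sum_n\eta_n a_n<+\infty$.

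\textbf{Increment control.} This is the one genuinely needed check. Using the inequality $\left|\|x\|^2-\|y\|^2\right|\leq(\|x\|+\|y\|)\|x-y\|$, the Lipschitz property of $\nabla\mathcal{J}$, and $\|\alpha^{n+1}-\alpha^n\|_T=\eta_n\|\nabla\mathcal{J}(\alpha^n)\|_T\leq \eta_n M$, we obtain
\[|a_{n+1}-a_n|\leq 2M\,\|\nabla\mathcal{J}\|_{Lip}\,\|\alpha^{n+1}-\alpha^n\|_T\leq 2M^2\|\nabla\mathcal{J}\|_{Lip}\,\eta_n.\]

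\textbf{Conclusion.} All hypotheses of Lemma \ref{lemma: step size sequence} now hold, since $(\eta_n)_{n\in\mathbb{N}}$ is non-increasing and satisfies \eqref{eq: condition eta}. Hence $a_n\to 0$, that is, $\|\nabla\mathcal{J}(\alpha^n)\|_T\to 0$.

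The main obstacle is the uniform bound $M$. Everything rests on Lemma \ref{lemma: iteration}, in particular on the finitely many early steps $n\leq n_0$ being controlled by Gronwall-type estimates. Without that bound, the $\eta_n^2$ error terms could not be summed.
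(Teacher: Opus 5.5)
Your proposal is correct and follows the paper's proof step for step. You get boundedness from Lemma \ref{lemma: iteration}, summability of $\sum_n \eta_n\|\nabla\mathcal{J}(\alpha^n)\|_T^2$ by telescoping \eqref{eq: iteration} against the lower bound on $\mathcal{J}$, an $O(\eta_n)$ increment bound from the Lipschitz property of $\nabla\mathcal{J}$, and then conclude with Lemma \ref{lemma: step size sequence}; your explicit increment estimate is also slightly cleaner than the paper's, which writes $\|\mathcal{J}(\alpha^{n})\|_T^2$ where $\|\nabla\mathcal{J}(\alpha^{n})\|_T^2$ is meant.
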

\begin{proof}
First since 
\[\sum_{n=1}^{+\infty}\eta_n^2<+\infty,\]
we deduce that 
\[\lim_{n\to \infty}\eta_n=0,\]
and that we may apply Lemma \ref{lemma: iteration}. Since the sequence $(\alpha^n)_{n\in \mathbb{N}}$ is bounded in $\mathcal{H}^T$, it follows that $(\|\nabla\mathcal{J}(\alpha^n)\|^2_T)_{n\in \mathbb{N}}$ is bounded in $\reels^+$. In particular we deduce from the inequality \eqref{eq: iteration} that there exists a constant $C$ such that for any $n\in \mathbb{N}$
\[\mathcal{J}(\alpha^{n+1})-\mathcal{J}(\alpha^n)\leq \left(\|\nabla \mathcal{J}\|_{Lip}\eta_n^2-\eta_n\right)\|\nabla\mathcal{J}(\alpha^n)\|^2_T.\]
\[\sum_{k=0}^n \eta_k\|\nabla\mathcal{J}(\alpha^k)\|^2_T\leq C\sum_{k=0}^n \eta_k^2+\mathcal{J}(\alpha^0)-\mathcal{J}(\alpha^{n+1}).\]
Since the potential $\mathcal{J}$ is bounded from bellow we finally deduce that 
\[\sum_{n\in \mathbb{N}} \eta_n\|\nabla\mathcal{J}(\alpha^n)\|^2_T<+\infty.\]
On the other hand let us also remark that there exists a constant $C$ such that for any $n\in \mathbb{N}$
\[\left|\|\mathcal{J}(\alpha^{n+1})\|^2_T-\|\mathcal{J}(\alpha^n)\|^2_T\right|\leq C\eta_n,\]
this follows once again from the boundedness of $(\|\nabla\mathcal{J}(\alpha^n)\|^2_T)_{n\in \mathbb{N}}$ and the fact that $\nabla \mathcal{J}$ is Lipschitz. In light of those inequalities we conclude thanks to Lemma \ref{lemma: step size sequence} that 
\[\lim_{n\to \infty} \|\nabla \mathcal{J}(\alpha^n)\|^2_T=0.\]
\end{proof}
Let us insist that the above Lemma still hold in the presence of common noise (i.e. even in this case the gradient of $\mathcal{J}$ tends to $0$ along the sequence $(\alpha^n)_{n\in \mathbb{N}})$. However due to the lack of compactness in the space of controls $\left(\mathcal{H}^T\right)^d$, this is not sufficient to conclude that the sequence $(\alpha^n)_{n\in \mathbb{N}}$ converges to a MFG equilibrium. Instead we rely once again on the theory of relaxed controls to recover some compactness on the problem. Although we believe it should be possible to prove that in the presence of common noise the convergence holds in the sense of Definition \ref{def: measure valued mfg} by looking at the associated cost 
\[\esp{g(X^{\alpha^n}_T,\mathcal{L}(X^{\alpha^n}_T|\mathcal{F}^0_T))+\int_0^TL(t,X^{\alpha^n}_t,\alpha^n_t,\mathcal{L}(X^{\alpha^n}_t,\alpha^n_t|\mathcal{F}^0_t))},\]
and using the fact that $\alpha^n$ is almost a solution of finite dimensional optimal control problem (since $\|\nabla \mathcal{J}(\alpha^n)\|$ is small), we do not carry this technical extension here and focus on the straightforward case of $\sigma^0=0$.
\begin{thm}
    \label{thm: convergence gradient descent}
Under hypotheses \ref{hyp: coef mfc} and \ref{hyp: reg mfc}, if 
\[\sigma>0,\]
then for any sequence $(\eta_n)_{n\in \mathbb{N}}\subset [0,1]$ satisfying \eqref{eq: condition eta}, the sequence $(\alpha^n)_{n\in \mathbb{N}}$ generated by \eqref{eq: gradient descent} is precompact in $\left(\mathcal{H}^T\right)^d$ and any limit point $\alpha^*$ is a solution of the mean field game problem \eqref{eq: mfg equilibrium condition}.
\end{thm}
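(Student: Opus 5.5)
The plan is to turn the vanishing of the gradient from Lemma \ref{lemma: convergence of gradient} into strong precompactness of $(\alpha^n)_{n\in\mathbb{N}}$ in $\left(\mathcal{H}^T\right)^d$. I will do this by identifying, for each $n$, a control $\beta^n$ that is built from FBSDE data with some regularity in space and time. I will then show $\|\alpha^n-\beta^n\|_T\to 0$ and that $(\beta^n)$ is precompact.

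Since $p\mapsto \nabla_pH(t,x,p,m)$ inverts $\alpha\mapsto\nabla_\alpha L(t,x,\alpha,m)$ and is Lipschitz (Hypothesis \ref{hyp: coef mfc}), set $\beta^n_t=D_pH(t,X^{\alpha^n}_t,U^{\alpha^n}_t,\mathcal{L}(X^{\alpha^n}_t,\alpha^n_t))$. Strong convexity of $L$ in $\alpha$ gives
\[\|\alpha^n-\beta^n\|_T\le C\|\nabla\mathcal{J}(\alpha^n)\|_T\to 0.\]
By Lemma \ref{lemma: convergence of gradient}, $(\alpha^n)$ is bounded in $\mathcal{H}^T$. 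Standard estimates then bound $X^{\alpha^n}$ in $L^2(\Omega,\mathcal{C}([0,T]))$ and $(U^{\alpha^n},Z^{\alpha^n})$ in the usual BSDE norms. The laws $m^n_t=\mathcal{L}(X^{\alpha^n}_t,\alpha^n_t)$ have bounded second moments, so they are precompact in $\mathcal{W}_q$ for $q<2$. This is the reason Hypothesis \ref{hyp: reg mfc} is stated with $\mathcal{W}_q$.

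The main obstacle is strong compactness of $(U^{\alpha^n})$, and hence of $(\beta^n)$, in $\mathcal{H}^T$. Weak limits do not suffice because of the nonlinearity in the measure argument. I would first pass to a subsequence along which the flow $(m^n_t)_t$ converges in $L^2([0,T],\mathcal{W}_q)$. Time compactness should come from the Lipschitz state dynamics for the $x$-marginal. For the $\alpha$-marginal I would use $\alpha^n\approx\beta^n$ together with the continuity in time of $X^{\alpha^n}$ and $U^{\alpha^n}$ in $L^2$.

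Next I would use $\sigma>0$. Freeze the limiting flow $m$ and consider the decoupled forward--backward system
\[dX=-D_pH(t,X,U,m_t)dt+\sigma dW,\qquad dU=D_xH(t,X,U,m_t)\,dt+Z\,dW,\qquad U_T=\nabla_xG(X_T,\mu_T).\]
With Lipschitz coefficients and nondegenerate noise, this system has a decoupling field $U_t=u(t,X_t)$ with $u$ Lipschitz in $x$ (Delarue's results on nondegenerate FBSDEs). The system satisfied by $(X^{\alpha^n},U^{\alpha^n})$ differs from it by perturbations: the measure argument $m^n\to m$, plus the driving control error $\alpha^n-\beta^n\to 0$. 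Stability of nondegenerate FBSDEs then gives $(X^{\alpha^n},U^{\alpha^n})\to(X,U)$ strongly, without any monotonicity assumption. Consequently $\beta^n\to\alpha^*:=D_pH(t,X_t,U_t,m_t)$ in $\mathcal{H}^T$, and therefore $\alpha^n\to\alpha^*$ in $\mathcal{H}^T$ as well. If arguing via the decoupling field is too delicate, my fallback is a Kolmogorov/Malliavin-type compactness criterion in $L^2$ for $(X^{\alpha^n},U^{\alpha^n})$.

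Finally I identify the limit. Continuity of $\nabla\mathcal{J}$ (it is Lipschitz) gives $\nabla\mathcal{J}(\alpha^*)=0$. By strong convergence, $m_t=\mathcal{L}(X^{\alpha^*}_t,\alpha^*_t)$, so $\nabla_\alpha L(t,X^{\alpha^*}_t,\alpha^*_t,m_t)=U^{\alpha^*}_t$. This gives the first-order condition \eqref{eq: first order optimality mfg} for the frozen flow $m$, with equality in every direction. The individual problem $J(\cdot,m)$ is convex here, since convexity in $x$ is inherited from the structure used in Lemma \ref{lemma: characterisation of control with fbsde}. Under that convexity, the first-order condition implies $\alpha^*\in\text{arginf}\,J(\cdot,m)$, and together with $m_t=\mathcal{L}(X^{\alpha^*}_t,\alpha^*_t)$ this is exactly the equilibrium condition \eqref{eq: mfg equilibrium condition}. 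Without convexity in $x$, I would instead invoke the FBSDE characterisation: a solution of the Pontryagin system with nondegenerate noise and Lipschitz coefficients is optimal. I expect this last point to also need care.
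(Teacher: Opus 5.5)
You have a genuine gap at the step where you extract a subsequence along which $t\mapsto m^n_t=\mathcal{L}(X^{\alpha^n}_t,\alpha^n_t)$ converges in $L^2([0,T],\mathcal{W}_q)$. Everything afterwards depends on it: freezing a genuine limit flow, stability of the nondegenerate FBSDE, and $\beta^n\to\alpha^*$.

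The $x$-marginal is fine, but your mechanism for the control marginal is circular. The control $\beta^n_t=D_pH(t,X^{\alpha^n}_t,U^{\alpha^n}_t,m^n_t)$ depends on $m^n_t$, hence on $\mathcal{L}(\alpha^n_t)$ itself. So time regularity of $X^{\alpha^n},U^{\alpha^n}$ transfers to $\mathcal{L}(\alpha^n_t)$ only if $m\mapsto D_pH(t,x,p,m)$ is a contraction. That essentially requires the Lipschitz constant of $\nabla_\alpha L$ in $m$ to be below the convexity constant of $L$ in $\alpha$, which is not assumed.

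Better estimates cannot repair this. Take $d=1$, $g=0$ and $l(m)=\frac12\int a^2\,m(dx,da)-K\cos\left(\int a\,m(dx,da)\right)$, so that $L=\frac12\alpha^2+K\sin\left(\int a\,dm\right)\alpha$.
\begin{itemize}
\item Hypotheses \ref{hyp: reg mfc} and \ref{hyp: coef mfc} hold, $U^\alpha\equiv 0$, and $\nabla\mathcal{J}(\alpha)_t=\alpha_t+K\sin(\mathbb{E}\alpha_t)$.
\item For $K=5$ the equation $e+K\sin e=0$ has a root $e^*\neq 0$.
\item The deterministic controls $\alpha^n_t=e^*\mathbf{1}_{\{\sin(nt)>0\}}$ are bounded and satisfy $\nabla\mathcal{J}(\alpha^n)=0$. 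Their states $X^{\alpha^n}$ are equi-H\"older and $U^{\alpha^n}=0$, so $(X^{\alpha^n},U^{\alpha^n})$ is even strongly convergent.
\item Yet neither $t\mapsto\mathcal{L}(\alpha^n_t)$ in $L^2([0,T],\mathcal{W}_q)$ nor $(\alpha^n)$ in $\mathcal{H}^T$ has a convergent subsequence.
\end{itemize}
So boundedness, $\|\nabla\mathcal{J}(\alpha^n)\|_T\to 0$ and compactness of $(X^{\alpha^n},U^{\alpha^n})$ do not imply precompactness. That is everything your argument uses, and everything your Kolmogorov/Malliavin fallback could deliver. These $\alpha^n$ are stationary for the descent, so there is no contradiction with the theorem. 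But a proof has to exploit the recursion $\alpha^{n+1}=\alpha^n-\eta_n\nabla\mathcal{J}(\alpha^n)$ itself, not only the conclusions of Lemma \ref{lemma: convergence of gradient}.

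The paper never asks for compactness of the flow up front. It proceeds as follows:
\begin{itemize}
\item It passes to a relaxed limit $\Lambda^*$ of $\Lambda^n=\delta_{m^n_t}(dm)dt$, which needs only moment bounds.
\item It solves the FBSDE whose coefficients are averaged against $\Lambda^*_t(dm)$. These are still Lipschitz in $(x,u)$, so $\sigma>0$ gives a Lipschitz decoupling field.
\item It obtains strong convergence of $(X^{\alpha^n},U^{\alpha^n})$ by It\^o and Gronwall.
\item Only at the end does it identify $\Lambda^*_t$ with $\delta_{\mathcal{L}(X_t,\alpha^*_t)}$.
\end{itemize}
In the example above this scheme indeed gives $X^{\alpha^n}\to X$ and $U=0$. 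However, $\Lambda^*_t=\frac12\delta_{\mathcal{L}(X_t)\otimes\delta_0}+\frac12\delta_{\mathcal{L}(X_t)\otimes\delta_{e^*}}$ is not a Dirac mass. Hence the paper's concluding bound of $\esp{\int_0^T|\alpha^n_t-F(t,X_t,U_t)|^q dt}$ by $\mathcal{W}_q(\Lambda^n,\Lambda^*)$, with $F(t,x,u)=\int\nabla_pH(t,x,u,m)\Lambda^*_t(dm)$, fails there. The relaxed route relocates your obstacle rather than removing it.

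Separately, your main argument for the last step uses convexity of $J(\cdot,m)$ in $x$, but Hypothesis \ref{hyp: convexity in x} is not assumed in this theorem. Your fallback is the argument the paper uses: when $\sigma>0$ and the coefficients are Lipschitz, the Pontryagin FBSDE for the frozen flow has a unique solution, so an optimal control must coincide with $\alpha^*$. You would also need to check that such an optimal control exists.
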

\begin{proof}
First, by Lemma \ref{lemma: convergence of gradient}, the sequence $(\alpha^n)_{n\in \mathbb{N}}$ is bounded in $\left(\mathcal{H}^T\right)^d$. In consequence we deduce that the sequence $(\mu^n_T,\Lambda^n)_{n\in \mathbb{N}}$ defined for fixed $n\in \mathbb{N}$ by 
\[\mu^n_T=\mathcal{L}(X^{\alpha^n}_T),\]
and 
\[\Lambda^n(dt,dm)=\delta_{\mathcal{L}(X^{\alpha^n}_t,\alpha^n_t)}(dm)dt,\]
is precompact in $\mathcal{P}_q(\reels^d)\times \mathcal{P}([0,T]\times \mathcal{P}_q(\reels^{2d}))$ for any $1\leq q<2$. Up to the extraction of a subsequence, let us assume that $(\mu^n_T,\Lambda^n)_{n\in \mathbb{N}}$ converges to some limit $(\mu^*_T,\Lambda^*)$. We are going to show that there exists a unique strong solution $(X_t,U_t,Z_t)_{t\in [0,T]}$ to the FBSDE 
\begin{equation}
\label{eq: fbsde limit mfc}
\left\{
\begin{array}{l}
    X_t=\displaystyle X_0-\int_0^t\int_{m\in \mathcal{P}(\reels^{2d})} \nabla_p H(s,X_s,U_s,m)\Lambda^*(ds,dm)+\sigma W_t,\\
    \displaystyle U_t=\nabla_x G(X_T,\mu^*_T)+\int_t^T \int_{m\in \mathcal{P}(\reels^{2d})}\nabla_x  H(s,X_s,U_s,m)\Lambda^*(ds,dm)-\int_t^T Z_sdW_s,\\
\end{array}
\right.
\end{equation}
and that the couple $(X^n_t,U^n_t)_{t\in [0,T]}$ defined through
\begin{equation}
\left\{
\begin{array}{l}
    X^n_t=\displaystyle X_0-\int_0^t \alpha^n_sds+\sigma W_t,\\
    \displaystyle U^n_t=\nabla_x G(X^n_T,\mathcal{L}(X^n_T))-\int_t^T \nabla_x L(X^n_s,\alpha^n_s,\mathcal{L}(X^n_s,\alpha^n_s))ds-\int_t^T Z^n_sdW_s\\
\end{array}
\right.
\end{equation}
converges to $(X_t,U_t)_{t\in [0,T]}$ in $\left(\mathcal{H}^T\right)^{2d}$. The first point follows from the fact that the FBSDE \eqref{eq: fbsde limit mfc} admits a Lipschitz decoupling field which formally solves 
\[\partial_t U+F(t,x,U)\cdot \nabla_x U-\frac{1}{2}\sigma^2 \Delta_x U=G(t,x,U),\]
where the functions $F,G$ are defined by 
\[F(t,x,u)=\int_{m\in \mathcal{P}(\reels^{2d})}\nabla_p H(t,x,u,m)\Lambda_t(dm), \quad G(t,x,u)=-\int_{m\in \mathcal{P}(\reels^{2d})}\nabla_x H(t,x,u,m)\Lambda_t(dm),\]
for $(\Lambda_t)_{t\in [0,T]}$ defined by the disintegration of $\Lambda$ as 
\[\Lambda(ds,dm)=\Lambda_s(dm)ds.\]
On the other hand, by definition of $\nabla\mathcal{J}(\alpha^n)$ and the invertibility of $\nabla_\alpha L$,
\begin{equation}
    \label{eq: alpha in function of u}
    \forall t\in [0,T],\quad \alpha^n_t=\nabla_p H(X^n_t,U^n_t+\nabla \mathcal{J}(\alpha^n)_t,\mathcal{L}(X^n_t,\alpha^n_t)),
\end{equation}
and let us observe that the triple $(X^n_t,U^n_t,Z^n_t)_{t\in [0,T]}$ solves 
\begin{equation}
\label{eq: fbsde reformulation}
\left\{
\begin{array}{l}
    X^n_t=\displaystyle X_0-\int_0^t \int_{m\in \mathcal{P}(\reels^{2d})}\nabla_p H(X^n_s,U^n_s+\nabla \mathcal{J}(\alpha^n)_s,m)\Lambda^n(ds,dm)+\sigma W_t,\\
    \displaystyle U^n_t=\nabla_x G(X^n_T,\mu^n_T)+\int_t^T \int_{m\in \reels^{2d}}\nabla_x  H(X^n_s,U^n_s+\nabla \mathcal{J}(\alpha^n)_s,m)\Lambda^n(ds,dm)-\int_t^T Z^n_sdW_s\\
\end{array}
\right.
\end{equation}
Clearly we want to use a stability argument on the FBSDE system \eqref{eq: fbsde limit mfc}, which should follow from the regularity of its decoupling field $U$. Since $U$ is not sufficiently smooth to justify directly the PDE formulation, we introduce $(F^\varepsilon,G^\varepsilon)_{\varepsilon>0}$ a regularization of $(F,G)$ in the time variable by convolution with a smooth compactly supported kernel, in such Fashion that $F^\varepsilon,G^\varepsilon$ have the same Lipschitz constant as $F,G$ in $(x,u)$ and letting
\[\omega_{coef}(\varepsilon)=\sup_{(x,u)\in \reels^{2d}}\int_0^T\left|\frac{F^\varepsilon(t,x,u)-F(t,x,u)}{1+|x|+|u|}\right|^2dt+ \int_0^T\left|\frac{G^\varepsilon(t,x,u)-G(t,x,u)}{1+|x|+|u|}\right|^2dt,\]
we have 
\[\lim_{\varepsilon\to 0} \omega_{coef}(\varepsilon)=0,\]
which follows from the fact that $\frac{F}{1+|x|+|u|},\frac{G}{1+|x|+|u|}\in L^2([0,T], L^\infty( \reels^{2d},\reels^d))$

We consider the sequence of solutions $(U^\varepsilon)_{\varepsilon>0}$ to 
\[\left\{\begin{array}{cc}
    \partial_t U^\varepsilon+F^\varepsilon(t,x,U^\varepsilon)\cdot \nabla_x U^\varepsilon-\frac{1}{2}\sigma^2\Delta U^\varepsilon=G^\varepsilon(t,x,U^\varepsilon), \quad (t,x)\in (0,T)\times \reels^d,\\
    U^\varepsilon(T,x)=\nabla_x G(x,\mu^*_T),\quad x\in \reels^d,
\end{array}\right.\]
By standard parabolic estimates \cite{parabolic-estimate-on-whole-space}, for any $\varepsilon>0$, $U^\varepsilon\in C^{1,2}([0,T)\times \reels^d,\reels^d)$ and is Lipschitz in $x$ with a constant independent of $\varepsilon$. Moreover $U^\varepsilon$ has appropriate growth assumptions on its derivatives to apply Ito's Lemma, and we deduce that for any $n\in \mathbb{N}$
\begin{align*}\nabla_x G(X^n_T,\mu^*_T)-U^\varepsilon(t,X^{\alpha^n}_t)&=\int_t^T G^\varepsilon(s,X^n_s,U^\varepsilon(s,X^n_s))ds\\&-\int_t^T \left(F^n(s,X^n_s,U^n_s)-F^\varepsilon(s,X^n_s,U^\varepsilon(s,X^n_s))\right)\cdot \nabla_x U^\varepsilon(s,X^n_s)ds,\end{align*}
with the notation 
\[F^n(t,x,u)=\int_{m\in \mathcal{P}(\reels^{2d})}\nabla_p H(t,x,u,m)\Lambda^n_t(dm).\]
Introducing the error 
\begin{align*}
\omega_U(\varepsilon)=\sup_{(t,x)\in [0,T]\times \reels^{d}}\frac{|U^\varepsilon(t,x)-U(t,x)|}{1+|x|},
\end{align*}
that tends to $0$ as $\varepsilon$ tends to $0$ by Azerla-Ascoli, and the stability of Lipschitz solution to parabolic transport systems with Lipschitz coefficients \cite{Lipschitz-solutions}, we deduce that for any $\varepsilon>0$ and any $1\leq q<2$
\begin{align*}\esp{|U^n_t-U(t,X^n_t)|^q}&\leq C_{growth}\left(\omega_U(\varepsilon)+\omega_{coef}(\varepsilon)\right)\left(1+\int_t^T\esp{|X^n_s|^2+|U^n_s|^2}ds\right)\\
&+2\|\nabla_x U\|_\infty\esp{\int_t^T \left|F^n(s,X^n_s,U^n_s+\nabla \mathcal{J}(\alpha^n)_s)-F(s,X^n_s,U(s,X^n_s))\right|^q ds}\\&+2\esp{\int_t^T\left|G^n(s,X^n_s,U^n_s+\nabla \mathcal{J}(\alpha^n)_s)-G(s,X^n_s,U(s,X^n_s))\right|^qds }\\
&+\esp{|\nabla_x G(X^n_T,\mu^n_T)-\nabla_x G(X^n_T,\mu^*_T)|^q},
\end{align*}
for a constant $C_{growth}$ depending on the linear growth of $U,\nabla_x H,\nabla_p H$. Letting $\varepsilon$ tends to $0$ and using the regularity of $H$ from Hypothesis \ref{hyp: reg mfc} we deduce that there exists a constant $C_{Lip}$ depending on $\sigma$ and the Lipschitz constant of coefficients and a $1\leq q <2$ such that, 
\[\esp{|U^n_t-U(t,X^n_t)|^q}\leq C_{Lip}\left(\underbrace{\mathcal{W}_q^q(\mu^n_T,\mu^*_T)+\mathcal{W}_q^q(\Lambda^n,\Lambda^*)+\|\nabla \mathcal{J}(\alpha^n)\|^q_T}_{\varepsilon_n}+\int_t^T\esp{|U^n_s-U(s,X^n_s)|^q}ds\right).\]
By Gronwall Lemma, it follows that 
\[\exists C>0, \quad \forall t\in [0,T]\quad \esp{|U^n_t-U(t,X^n_t)|^q}\leq C\varepsilon_n,\]
moreover, by assumption on the sequence $(\mu^n_T,\Lambda^n)_{n\in \mathbb{N}}$ and Lemma \ref{lemma: convergence of gradient} 
\[\lim_{n\to \infty} \varepsilon_n=0.\]
Using this estimate we deduce once again by Gronwall Lemma that 
\[\exists C>0, \quad \forall t\in [0,T]\quad \esp{|X^n_t-X_t|^q}\leq C\varepsilon_n,\]
in particular we conclude to 
\[\lim_{n\to \infty} \esp{|X^n_t-X_t|^q}+\esp{|U^n_t-U_t|^q}=0.\]
From the representation \eqref{eq: alpha in function of u}, 
\begin{align*}\esp{\int_0^T |\alpha^n_t-F(t,X_t,U_t)|^qdt}&\leq \|\nabla_p H\|_{Lip}\left(\int_0^T\left(\esp{|X^n_t-X_t|^q}+\esp{|U^n_t-U_t|^q}\right)dt+\mathcal{W}_q(\Lambda^n,\Lambda)+\|\nabla \mathcal{J}(\alpha^n)\|_T^q\right).\end{align*}
Introducing $\alpha^*\in \left(\mathcal{H}^T\right)^d$ defined by 
\[\forall t\in [0,T],\quad \alpha^*_t=F(t,X_t,U_t),\]
it follows that
\[\lim_{n\to \infty}\esp{\int_0^T |\alpha^n_t-\alpha^*_t|^qdt}=0,\]
which also leads to 
\[\lim_{n\to \infty}\mathcal{W}_q\left(\Lambda^n, \delta_{\mathcal{L}(X_t,\alpha^*_t)}(dm)dt\right),\]
and finally to 
\[\int_0^T \mathcal{W}_q(\Lambda^*_t,\mathcal{L}(X_t,\alpha^*_t))dt=0.\]
This means that 
\[\esp{\int_0^T \left|\nabla_\alpha L(X_t,\alpha^*_t,\mathcal{L}(X_t,\alpha^*_t))-U_t\right|^2dt}=0,\]
and the triple $(X_t,U_t,Z_t)_{t\in [0,T]}$ is a solution to 
\begin{equation*}
\left\{
\begin{array}{l}
    X_t=\displaystyle X_0-\int_0^t\nabla_p H(s,X_s,U_s,\mathcal{L}(X_s,\alpha^*_s))ds+\sigma W_t,\\
    \displaystyle U_t=\nabla_x G(X_T,\mathcal{L}(X_T))+\int_t^T \nabla_x  H(s,X_s,U_s,\mathcal{L}(X_s,\alpha^*_s))ds-\int_t^T Z_sdW_s,\\
\end{array}
\right.
\end{equation*}
From the regularity of coefficients and the fact that $\sigma>0$, given $(\mathcal{L}(X_s,\alpha^*_s))_{s\in [0,T]}$, there exists a unique solution to the (finite dimensional) $FBSDE$ with coefficients 
\[(t,x,u)\mapsto (-\nabla_p H,-\nabla_x H)(t,x,u,\mathcal{L}(X_t,\alpha^*_t)),\]
and we deduce that $\alpha^*$ is a solution to the MFG \eqref{eq: mfg equilibrium condition}. Since we worked along a general converging subsequence of $(\mu^n_T,\Lambda^n)_{n\in \mathbb{N}}$ we deduce that this result hold along any such subsequence. To extract a converging subsequence of $(\alpha^n)_{n\in \mathbb{N}}$ it is sufficient to extract a converging subsequence $(\mu^{\varphi(n)}_T,\Lambda^{\varphi(n)})_{n\in \mathbb{N}}$ of $(\mu^n_T,\Lambda^n)_{n\in \mathbb{N}}$, and to consider the sequence $(\alpha^{\varphi(n)})_{n\in \mathbb{N}}$, hence the precompactness of $(\alpha^n)_{n\in \mathbb{N}}$, the fact that any limit point is a MFG equilibrium is what we just demonstrated.
\end{proof}

\subsection{Iterative scheme and numerical concerns}
Equivalently, the gradient descent can be rewritten as 
\begin{equation}
\label{eq: gradient descent scheme}
\forall t\in [0,T],\quad 
\left\{
    \begin{array}{l}
       \displaystyle  X^n_t=X_0-\int_0^t \alpha^n_sds+\sigma W_t,\\
        \displaystyle U^n_t=\nabla_x G(X^n_T,\mathcal{L}(X^n_T))-\int_t^T \nabla_x L(s,X^n_s,\alpha^n_s,\mathcal{L}(X^n_s,\alpha^n_s))ds-\int_t^TZ^n_s dW_s,\\
         \alpha^{n+1}_t=\alpha^n_t-\eta_n \left(\nabla_\alpha L(X^n_t,\alpha^n_t,\mathcal{L}(X^n_t,\alpha^n_t))-U^n_t\right).
    \end{array}
\right.
\end{equation}
By Theorem \ref{thm: convergence gradient descent}, for this iterative scheme on decoupled systems of FBSDEs, the sequence $(\alpha^n)_{n\in \mathbb{N}}$ is precompact in $\left(\mathcal{H}^T\right)^d$ and any converging subsequence has for limit a MFG equilibrium. In practise the backward process $(U^n_t)_{t\in [0,T]}$ can be computed by approximating the conditional expectation with a regression step and this should lead to a rather straightforward numerical implementation. However there is one technical details in that at the n-th step, $(U^n_t)_{t\in [0,T]}$ is only a Markovian process in function of $(V^n_t)_{t\in [0,T]}=(\alpha^0_t,X^1_t,U^1_t,\cdots, X^{n-1}_t,U^{n-1}_t,X^n_t)_{t\in [0,T]}$ and not just $(X^n_t)_{t\in [0,T]}$, consequently to compute the conditional expectation with respect to the filtration $\mathcal{F}_t$, the regression step should be taken against $V^n_t$. Clearly this is not a viable numerical method, one way to avoid this issue is by computing directly the conditional expectation with respect to the family $(W_s)_{s\leq t}$ by a Wiener Chaos expension \cite{Briand_Labart_2014} but this is still computationally intensive. Instead we here argue that the conditional expectation can be accurately approximated by a regression step with respect to $X^n_t$ only. Let us remind that we have already established in the proof of Theorem \ref{thm: convergence gradient descent}, that there exists a constant $C>0$ such that along any converging subsequence of $(\alpha^n)_{n\in \mathbb{N}}$, there exists a Lipschitz function $U$ and a $1\leq q<2$, such that 
\[\esp{|U^n_t-U(t,X^n_t)|^q}\leq C\varepsilon_n,\]
with 
\[\lim_{n\to \infty} \varepsilon_n=0.\]
In consequence, for such $q$, 
\begin{align*}\esp{\left|\espcond{U^n_t}{X^n_t}-U^n_t\right|^q}&=\esp{|\left|\espcond{U^n_t-U(t,X^n_t)}{X^n_t}-(U^n_t-U(t,X^n_t))\right|^q}\\
    &\leq 2C \varepsilon_n.
\end{align*}
While this introduce another layer of error into the scheme, we believe the tradeof in terms of speed for each iteration to be worth it. Although we do not fully analyze the discretized scheme and its convergence in this article, let us mention that similar ideas have already been used for FBSDEs \cite{Bender_Zhang_2008}.

\section*{acknowledgements}
\begin{center}
The author is thankful to François Delarue and Nizar Touzi for helpful discussions and comments on the manuscript. 

\quad

Charles Meynard acknowledge the financial support of the European Research Council (ERC) under the European Union’s Horizon Europe research and innovation program (ELISA project, Grant agreement No. 101054746). Views and opinions expressed are however those of the author only and do not necessarily reflect those of the European Union or the European Research Council Executive Agency. Neither the European Union nor the granting authority can be held responsible for them
\end{center}

\bibliographystyle{plain}
\bibliography{source}

\end{document}